\newtheorem{theorem}{Theorem}[section]
\newtheorem{proposition}[theorem]{Proposition}
\newtheorem{corollary}[theorem]{Corollary}
\theoremstyle{definition}
\newtheorem{definition}[theorem]{Definition}
\newtheorem{remark}[theorem]{Remark}
\newtheorem{example}[theorem]{Example}
\newcommand{\lra}{\longrightarrow}
\renewcommand{\H}{{\mathcal{H}}}
\renewcommand{\sup}{\mathrm{sup}}
\renewcommand{\inf}{\mathrm{inf}}
\newcommand{\R}{{\mathbb{R}}}
\newcommand{\F}{{\mathbb{F}}}
\newcommand{\Z}{{\mathbb{Z}}}
\newcommand{\SV}{{\mathrm{SV}}}
\newcommand{\slk}{{\mathrm{lk}}}
\newcommand{\bfp}{{\mathbf{p}}}
\DeclareMathOperator{\Hom}{Hom}
\DeclareMathOperator{\lk}{Lk}
\newcommand{\emb}{\ensuremath{\mathrm{emb}}}
\DeclareMathOperator{\Ker}{Ker}
\begin{document}

\author{Jelena Grbi\'c${}^{\dag}$}
\address{School of Mathematical Sciences\\
University of Southampton\\ Southampton, UK}
\email{j.grbic@soton.ac.uk}

\author{Jie Wu${}^{\dag,\ddag}$}
\address{School of Mathematical Sciences\\
Center of Topology and Geometry based Technology\\
Hebei Normal University\\
No 20 Road East, 2nd Ring South\\
Yuhua District, Shijiazhuang\\
Hebei, 050024 CHINA}
\email{wujie@hebtu.edu.cn}

\author{Kelin Xia${}^{\dag}$}
\address{School of Physical and Mathematical Sciences\\
Nanyang Technological University\\
SPMS-MAS-05-18, 21 Nanyang Link, 1\\
Singapore 63737}
\email{xiakelin@ntu.edu.sg}

\author{Guowei Wei${}^{\dag}$}
\address{Department  of  Mathematics\\
Michigan State University\\
D301 Wells Hall, 619 Red Cedar Road\\
East Lansing, MI 48824, USA}
\email{weig@msu.edu}

\title[Topological approach to Data  Science]{A Unified Topological Approach to Data Science\footnote{\hspace*{-.4cm}$\dag$ first authors\\
$\ddag$ corresponding author}}

\begin{abstract}
We establish a new theory which gives a unified topological approach to data science, by being applicable both to point cloud data and to graph data, including networks beyond pairwise interactions. We generalize simplicial complexes and hypergraphs to super-hypergraphs and establish super-hypergraph homology as an extension of simplicial homology. Driven by applications, we also introduce super-persistent homology. 
\end{abstract}

\subjclass[2010]{}
\keywords{}

\maketitle

\section{Introduction}

Topological data analysis (TDA) is a fast growing area of research stemming from work on persistent homology such as ~\cite{ELZ,ZC} and the pioneering paper of Carlsson~\cite{Carlsson}. TDA has been successfully applied in various areas of the sciences and technology such as material science~\cite{KGKM, KLTSXPSM,Nakamura}, 3D shape analysis~\cite{Skraba, Turner}, multivariate time series analysis~\cite{Seversky}, molecular biology~\cite{Wei2018,Wei2017,Wei2017-2,XW}, sensor networks~\cite{Silva}, scientific visualization~\cite{Tierny2017}, machine learning~\cite{Chazal, Pokorny}, etc. The wide applications of TDA have made topology as one of the most commonly used mathematical tools in Data Science~\cite{Data}.
 As the applications of TDA continue to expand, the subject has inspired new theories in topology that will enable its further applications to engineering, the natural and social sciences, and the arts.



In the survey paper~\cite{CM},  Chazal and Michel outlined  a pipeline that stresses the role of topology and geometry in data science:
\begin{enumerate}
\item[(i)] Input data is given in the form of a finite set of points coming with a notion of distance.
\item[(ii)] A ``continuous shape'' is built from the input data: this results in a structure over the data.
\item[(iii)] Topological and geometric information is extracted from the structure.
\item[(iv)] The topological and geometric information is the output of the analysis and forms the new representation of the data, allowing for an in-depth modeling
of the original data.
\end{enumerate}
This approach can be naturally applied to point cloud data with a drawback that it can not be immediately or directly applied to non-Euclidean data such as abstract relationships within graphs. In this setting, data is always treated as a geometric space embedded in an Eucleadian space and must come with the notion of distance. 


The purpose of this article is to provide a unified topological approach to data science that is suitable for both point cloud data and graph data. We depart from the requirement that data needs to come with a notion of metric and instead equip it with the geometry-free notion of scoring schemes. In our setting, we explore topological structures on graph data with scoring schemes.  This approach also recovers the current methods in TDA for analyzing point cloud data, in particular popular persistent homology, by associating the complete graph on the point cloud and specifying a particular scoring scheme.

We start with a graph, which is the working graph for the data analytic purpose. Our approach consists of the following steps:

\begin{enumerate}
\item[(A)] We introduce a homology theory of a collection of subgraphs of the working graph, which is a generalisation of simplicial homology theory. This homology theory gives ``topological invariants'' for collections of subgraphs associated to data.
\item[(B)] On the working graph $G$ we assign a \textit{scoring scheme}, a function from the set of subgraphs of $G$ to the set of real numbers. The scoring scheme induces persistence on homologies in (A), which we call \textit{super persistent homology}, as well as its derived \textit{topological features} such as super-persistence diagrams and super-persistence modules.
\item[(C)] The current persistent homology of point cloud data can be deduced from (A) and (B). Hence our approach is suitable for performing topological data analysis on both graphic data and point cloud data.
\end{enumerate}
The pineline of our super persistent homology is as follows:
\begin{enumerate}
    \item The input is assumed to be a finite (or infinite) graph $G$ with
    \begin{enumerate}
        \item [(i)] a scoring scheme and
        \item [(ii)] a selection of subgraphs.
\end{enumerate}
The definition of the scoring scheme on the data is usually given as an input or guided by applications. It is however important to notice that the choice of a scoring scheme may be critical to revealing interesting topological and geometric features of the data. The selection of subgraphs on the data is also usually given as an input or guided by the applications at hand. Again it is important to notice that the selection of subgraphs may be critical to revealing interesting topological and geometric features of the data.
\item An abstract ``geometry-like'' shape is built on top of the data in order to detect its underlying topological structure. This is a nested family of super-hypergraphs filtered by the scoring scheme that reflects the structure of the data at different scales. Super-hypergraphs can be seen as higher dimensional generalizations of neighboring graphs that are classically built on top of data in many standard data analysis or learning algorithms. The challenge here is to define structures that reflect relevant information about the data and that can be effectively constructed and manipulated in practice.
\item The extracted topological information provides new families of features and descriptors of the data. These can be used to better understand the data or to be combined with other features for further analysis and machine learning tasks. An important gain in this step is the demonstration of the added-value and the complementarity, with respect to other features, of the information obtained by super persistent homology.
\item Adjust the choice of scoring scheme and the selection of subgraphs to get better features and descriptors of the data.
\item The procedure can be iterated on the choices of scoring scheme and selections of subgraphs to obtain the best features and descriptors of the data.
\end{enumerate}

An important point of the above pipeline is that it provides an indeterministic  approach, which avoids \textit{topological noise}. According to~\cite[page 2]{CM}, the topological noise is noise created by deterministic method due to the fact that deterministic approaches do not take into account the random nature of data.

In the pursuit of a unified approach that will drive the development and applications of topology forward, we give the further motivating observations:
\begin{enumerate}
\item[($1$)] As the scale of information being processed increases, there is a need to understand the global picture. This requires looking at relationships of~higher order than the pairwise relationship currently considered by graphs. 

\item[($2$)] Many applications use simplicial complexes as core modeling objects which allow for the application of tools from algebraic topology. These work particularly well for point cloud data as simplices are uniquely determined by their vertices. However, in the study of graph data a more general concept is needed to build a topological model of the data as two different subgraphs might be defined on the same set of vertices, see Figure~\ref{fig:exam_2}. 

	\begin{figure}[h]
			\centering
			\begin{minipage}{0.4\textwidth}
				\centering
				\begin{tikzpicture} [scale=0.7, inner sep=2mm]
				\coordinate (1) at (0,0);
				\coordinate (2) at (2,0);
				\coordinate (3) at (2,2);
				\coordinate (4) at (0,2);
				\draw (1) node[below left] {$v_1$}-- (2) node[below right] {$v_2$} -- (3)  node[above right] {$v_3$} -- (4) node[above left] {$v_4$} -- (1) ;
				\foreach \i in {1,...,4} {\fill (\i) circle (1.8pt);}; 
				\end{tikzpicture} 
			\end{minipage}\qquad
			\begin{minipage}{0.4\textwidth}
				\centering
				\begin{tikzpicture} [scale=.7, inner sep=2mm]
				\coordinate (1) at (0,0);
				\coordinate (2) at (2,0);
				\coordinate (3) at (2,2);
				\coordinate (4) at (0,2);
				\draw (1) node[below left] {$v_1$}-- (2);
				\draw (1) node[below left] {$v_1$}-- (3);
				\draw (1) node[below left] {$v_1$}-- (4); 
				\foreach \i in {1,...,4} {\fill (\i) circle (1.8pt);}; 
				\end{tikzpicture} 
			\end{minipage}
			\caption{Two different subgraphs on the same vertex set of the complete graph on 4 vertices.}
				\label{fig:exam_2}
		\end{figure}
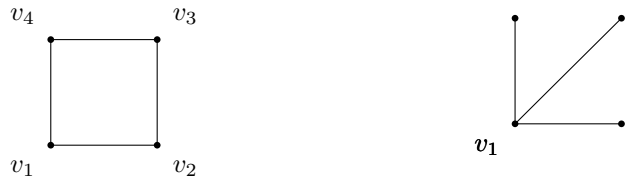

\item[($3$)] To detect higher-order interactions, such as those appearing in social systems and gene pathways, which may not form a closed system as required by a simplicial complex, the notion of hypergraphs can be used. See for example a modeling of the collaboration network in Figure~\ref{collaboration}. This now requires development of topological theories of hypergraphs. 
\begin{figure}[h]
			\centering
			
			\begin{minipage}{0.4\textwidth}
				\centering
				\begin{tikzpicture} [scale=0.7, inner sep=2mm]
				\coordinate (1) at (0,0);
				\coordinate (2) at (2,0);
				\coordinate (3) at (1,2);
				\draw (1) node[below left]{$A$}-- (2) node[below right] {$B$} -- (3)  node[right] {$C$}-- (1);
				\foreach \i in {1, 2, 3} {\fill (\i) circle (2pt);} 
			    \draw[thick, dashed](0.1, 0) -- (1.95, 0);
			    \draw[thick, dashed](0.05,0.1)--(0.95,1.9);
			    \draw[thick, dashed](2, 0.05)--(1.05,1.9);
				\fill[lightgray, fill opacity=0.4] (1)--(2)--(3)--(1);
				\end{tikzpicture} 
			\end{minipage}
			\caption{A collaboration network in which three researchers have a joint paper but no two have a paper together}
			\label{collaboration}
		\end{figure}
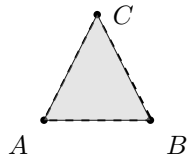

\item[($4$)] To understand the common nature of point cloud and graph data, there is a need to establish a unified approach to applications of topology. That will allow the introduction of new topological structures and thereby broaden the ranges of problems that topology can resolve. 
\end{enumerate}



There have been extensive explorations of topological and categorical structures on graphs. In Section~\ref{sec:topstructures} we survey the existing topological structures associated to a graph. The geometric realization, for example, of the neighborhood complex of a graph is quite different from that of the clique complex. This indicates that there are different topological structures on the same graph. Our introduction of scoring schemes gives a new way of assigning a wide range of topological structures on the space of subgraphs. In applications, this enables various topological methods to analyze a working graph given by the data. These can then extract different aspects of the data and can characterize the behaviour of higher order relationships.

An approach to investigating higher-order interactions in the graph data is to consider a collections of subgraphs of the working graph. This gives rise to the following mathematical question: Given $\mathcal{H}$, a collection of subgraphs of a working graph $G$, how can topological structures be introduced on $\mathcal{H}$ with as few constraints as possible?

In this paper, we will answer this question, both from a mathematical and applications in data science perspective. 

The central focus of this paper is the introduction of super-hypergraphs that resolves the limitations of completeness and vertex determination in simplicial complexes. In algebraic topology, $\Delta$-sets resolve the problem of vertex determination while generalising simplicial complexes. A $\Delta$-set $X$ is a graded set $X_0, X_1,X_2,\cdots$, where $X_n$ consists of $n$-dimensional simplices of $X$ endowed with face operations $d_i\colon X_n \longrightarrow X_{n-1}$, $0 \leq i \leq n$, satisfying the $\Delta$-identity $d_id_j=d_jd_{i+1}$ for $i\geq j$.  A $\Delta$-set can be described in terms of feed-forward neural networks, see Subsection~\ref{subsection4.5}. Using $\Delta$-sets, we can model a collection of two or more subgraphs sharing the same vertices.

To ensure that topological structures are built on a collection  $\H$ of subgraphs with as few constrains as possible, we introduce a super-hypergraph (see Definition~\ref{def:sup_hyp_graph}) as a graded subset
of a $\Delta$-set. If a $\Delta$-set is given by an oriented simplicial complex, then a super-hypergraph coincides with a hypergraph. Therefore, a super-hypergraph is an extension of a hypergraph that allows hyperedges to form a multiset. An important aspect in the present paper is that simplicial homology can be naturally extended to a homology theory of super-hypergraphs as described in Section~\ref{homology}, giving new topological invariants of super-hypergraphs.

 

By embedding hypergraphs data in $\Delta$-sets, our approach allows for observing different topological aspects of the data, the reduction of topological noise and constructing relevant confidence regions on the topological aspects of the data. Notably, varying the $\Delta$-set makes possible to depart from the current deterministic study of hypergraph models. 

To bring our topological theory to applications, we introduce persistent super-hypergraph homology and interpret it as persistence on graph data. This uses scoring schemes on a space of finite subgraphs of the working graph. The peristent super-hypergraph homology generlaizes the classical persistent homology and in Subsection~\ref{ordinary persistent homology} we illustrate that relationship.

Super-persistent homology has the desired stability property~\cite{RW}, which gives robustness to this approach allowing it to be used in various applications. Hence super-persistent homology is a novel topological approach that can be applied to broader objects in data science.

This paper is a theoretic research resulting in a framework which establishes a unified topological study of data science. In Section~\ref{sec:potent_apps}, we outline how this approach could be applied to work in areas like bio-molecular science, drug design and networks with group interactions.

\tableofcontents

\section{Topological structures associated to graphs}\label{sec:topstructures}

Throughout mathematics numerous topological and categorical structures on graphs have been explored. In this section, we will survey various simplicial complexes associated to graphs that allow us to consider the space of subgraphs of a given graph from a topological perspective.

A \textit{directed (multi-)graph} (or \textit{multi-digraph} or \textit{quiver}) is a pair $G=(V(G),E(G))$ together with a function
$
\mathrm{end}\colon E(G)\longrightarrow V(G)\times V(G)$ given by 
$$ 
\mathrm{end_G}(e)\mapsto (i(e),t(e))
$$
where $V(G)$ is the \textit{vertex} set, $E(G)$ is the \textit{edge} set, $i(e)$ is the \textit{initial} vertex of the edge $e$, and $t(e)$ is the \textit{terminal} vertex of $e$.

An \textit{undirected (multi-)graph}\footnote{We follow the definition of a multi-graph in~\cite{Diestel,Balakrishnan}. In some literature such as~\cite{Rahman}, a multi-graph is defined by requiring the edge set to be a multi-set. The difference is that the edges between two vertices are labeled by $E(G)$ together with the incidence map $\mathrm{end}_G$. Such a definition coincides with the definition on quiver (as directed multi-graph)~\cite{Schiffler}.} is a pair $G=(V(G),E(G))$ together with a function
$
\mathrm{end_G}\colon E(G)\longrightarrow (V(G)\times V(G))/\Sigma_2$  given by 
$$
\mathrm{end_G}(e)\mapsto \{i(e),t(e)\}
$$
where $(V(G)\times V(G))/\Sigma_2$ is the orbit set of $(V(G)\times V(G))$ modulo the $\Sigma_2$-action given by permuting the coordinates, $V(G)$ is the \textit{vertex} set, $E(G)$ is the \textit{edge} set, $\mathrm{end_G}$ is an \textit{incidence relation} that associates with each edge of $G$ an unordered pair of, possibly equal, elements of $V(G)$. In this definition of a directed/undirected (multi-)graph, the empty graph is allowed.

A \textit{subgraph} $H$ of a directed/undirected (multi-)graph $G$ is a graph $H=(V(H),E(H))$ with $V(H)\subseteq V(G)$, $E(H)\subseteq E(G)$ and $\mathrm{end}_H=\mathrm{end}_G|_{E(H)}$.

A directed/undirected graph $G$ is  \textit{simple} if $\mathrm{end}_G$ is injective and the image $\mathrm{end}_G(E(G))$ is disjoint from the diagonal $\Delta(V(G))$ in $V(G)\times V(G)$ or $(V(G)\times V(G))/\Sigma_2$. This means that there are no loops or multi-edges between two vertices.

From the perspective of applications, the initial data is represented by a given graph $G$ and let $\H$ be a collection of subgraphs of $G$. Our goal is to investigate the possible topological structures on $\H$. However, before we address this general question, we review some classical constructions of simplicial complexes associated to graphs.

\subsection{Clique Complexes}
Typically, the study of collections of subgraphs has focused on measuring how strongly connected different parts of a graph are. A clique (or flag) complex and an independence complex (the clique complex of the complementary graph) are topological spaces that contain information about the connectivity of a graph. These are widely used objects in mathematics and its applications, see ~\cite{Aharoni-Berger-Ziv,Barmak,Ehrenborg-Hetyei,Engstrom,Kozlov99} for some recent works.

A \textit{complete graph} is a simple graph $G = (V(G), E(G))$ with the property that every pair of distinct vertices of $G$ are adjacent in $G$.

A \textit{clique} of a graph $G$ is a  complete subgraph of $G$.
 
The \textit{clique complex} of a simple graph $G$ is the abstract simplicial complex $\mathrm{Clique}(G)$ whose simplices consist of all cliques of $G$. An $n$-simplex $\sigma$ in $\mathrm{Clique}(G)$ is a clique of $G$ with $(n+1)$ vertices, and a face of a simplex $\sigma\in \mathrm{Clique}(G)$ is a complete subgraph obtained by deleting some vertices of $\sigma$.

When working with a multi-graph $G=(V,E)$, the set of cliques $\mathrm{Clique}(G)$ is generally not a simplicial complex as this requires that all simplices are uniquely determined by their vertex set. For example, let $G$ be a multi-graph with two vertices $v$ and $w$ and two edges $e_1$ and $e_2$ joining them. Then
$$
\mathrm{Clique}(G)=\{\bar e_1,\bar e_2, v, w\}
$$
has two $1$-simplices $\bar e_1$ and $\bar e_2$ sharing the same vertices $v$ and $w$, see Figure~\ref{figure_multigraph}. Therefore, a more suitable object for describing the topological structure of $\mathrm{Clique}(G)$ is a $\Delta$-set.

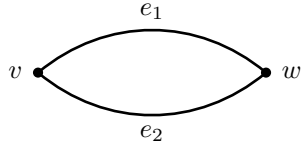
\begin{figure}[h]
			\centering
			\begin{minipage}{0.4\textwidth}
				\centering
	\begin{tikzpicture} [scale=1, inner sep=2mm]
			\coordinate (a) at (0.5,0.5);
			\coordinate (b) at (0.5,-0.5);
			\coordinate (e) at (2,0); 
			\coordinate (f) at (-1,0); 
			\draw[line width=1pt] (f) to [out=-40,in=-140] (e); 
			\draw[line width=1pt] (f) to [out=40,in=140] (e); 
			\foreach \i in {e, f} {\fill(\i) circle (1.9pt);} 
		    \draw (a) node[above] {$e_1$};
		    \draw (b) node[below] {$e_2$};
			\draw (e) node[right] {$w$};
			\draw (f) node[left] {$v$};
			\end{tikzpicture} 
				\end{minipage}
			\caption{The multi-graph $G$, which looks the same as the clique complex  $\mathrm{Clique}(G)$.}
			\label{figure_multigraph}
		\end{figure}
\begin{definition}\label{def:delta_set}~\cite{Curtis, Wu}
    A \textit{$\Delta$-set} $X_*$ is a sequence of sets $X_*=(X_n)_{n\geq0}$ with maps $d_i\colon X_n\to X_{n-1}$, for $0\leq i\leq n$ and  $n\geq 1$,  called \textit{face operations}, satisfying the following $\Delta$-identity
    \begin{equation}\label{equ:delta_set}
        d_id_j=d_jd_{i+1}\textrm{ for } i\geq j.
    \end{equation}
\end{definition}

For an undirected multi-graph $G$, the $\Delta$-set structure on $\mathrm{Clique}(G)$ is given in the following way. 
Assign a total ordering to $V(G)$ and define $\mathrm{Clique}_n(G)$ to be the set of cliques of $G$ that have exactly $n+1$ vertices. For $\sigma\in \mathrm{Clique}_n(G)$ with vertices $v_0<v_1<\cdots<v_n$, define $d_i\sigma =\sigma - v_i$, the subclique of $\sigma$ obtained by deleting the vertex $v_i$ and the edges incident to $v_i$ for $0\leq i\leq n$. It is straightforward to check that $\mathrm{Clique}_*(G)$ forms a $\Delta$-set\footnote{The definition of the $\Delta$-set $\mathrm{Clique}_*(G)$ depends on the given order on vertices of $G$, but the homology of $\mathrm{Clique}_*(G)$ is independent on this choice because the geometric realization of a $\Delta$-set is a $\Delta$-complex~\cite[Proposition 1.39, p. 51]{Wu} in the sense of Hatcher~\cite{Hatcher}.}.

\subsection{Neighborhood Complexes and Jonsson's Graph Complexes}

We proceed by considering a collection of simplicial complexes associated to graphs which will naturally lead to new constructions suitable for studying spaces of subgraphs. We start with a famous construction of the neighborhood complex of a graph. This was introduced by Lov\'asz~\cite{Lovasz} in 1978 in his work on Kneser's conjecture which laid the foundations of topological combinatorial by introducing homotopy theoretical methods to combinatorics. Nowadays, the research area of topological combinatorics is very active and fruitful.  The generalization by Lov\'asz of the neighborhood complexes  to the Hom complex ~\cite{Babson-Kozlov2006, Kozlov}, which has the same homotopy type as the clique complex of an exponential graph~\cite[Remark 3.6]{Dochtermann}, was used in a breakthrough work of Babson and Kozlov~\cite{Babson-Kozlov} to solve the Lov\'asz conjecture which relates the chromatic number of a graph with the homology of its Hom complex. Our theory  is based on the exploration of the interplay between topology and combinatorics.

The \textit{neighborhood complex} $\mathcal{N}(G)$ of a graph $G$ is a simplicial complex on vertex set $V(G)$ in which an $n$-simplex is a subset of $V(G)$ with $n+1$ vertices such that all vertices are adjacent to an other vertex in $G$.

As we discussed in the previous subsection, $\mathrm{Clique}(G)$ may not be a simplicial complex for a multi-graph $G$. However, for any graph $G$ the neighborhood complex $\mathcal{N}(G)$ is a simplicial complex.

The topology on the geometric realization of $\mathcal{N}(G)$ can be quite different from that of $\mathrm{Clique}(G)$ in general. For example, let $G$ be a graph with three vertices $a,b,c$ and two edges given by $ab$ and $bc$. Then $\mathcal{N}(G)=\{\{a,c\},\{a\},\{b\},\{c\}\}$, which is not connected, see Figure~\ref{fig:exam_3_Nbhd(g)}, and $\mathrm{Clique}(G)=\{\{a,b\},\{b,c\},\{a\},\{b\},\{c\}\}$ which is connected, see Figure~\ref{fig:exam_3_graph_Clique}. This indicates that there are various topological structures one could construct for a given working graph $G$.
\begin{figure}[h]
			\centering
			\begin{minipage}{0.4\textwidth}
				\centering
				\begin{tikzpicture} [scale=1.5, inner sep=2mm]
				\coordinate (1) at (0,0);
				\coordinate (2) at (0.5,1);
				\coordinate (3) at (1,0);
				\draw (1) node[below left] {$a$}-- (2) node[above] {$b$} -- (3)  node[below right] {$c$};
				\foreach \i in {1,...,3} {\fill (\i) circle (1.8pt);}; 
				\end{tikzpicture} 
				\subcaption{The graph $G$, which is the same as $\mathrm{Clique}(G)$.}
				\label{fig:exam_3_graph_Clique}
			\end{minipage}\qquad
			\begin{minipage}{0.4\textwidth}
				\centering
				\begin{tikzpicture} [scale=1.5, inner sep=2mm]
				\coordinate (1) at (0,0);
				\coordinate (2) at (0.5,1);
				\coordinate (3) at (1,0);
				\draw (1) node[below left] {$a$}-- (3)  node[below right] {$c$};
				\draw (2) node[below right] {$b$};
				\foreach \i in {1,...,3} {\fill (\i) circle (1.8pt);}; 
				\end{tikzpicture} 
				\subcaption{The neighborhood complex of $G$, $\mathcal{N}(G)$.}
				\label{fig:exam_3_Nbhd(g)}
			\end{minipage}
			\caption{}
		\end{figure}

In~\cite[p.26]{Jonsson}, Jonsson defines a graph complex in the following way. A \textit{graph complex}\footnote{Kontsevich also introduced graph complexes with a different defintion~\cite{Kontsevich}.} on a finite vertex set $V$ is a family $\mathcal{E}$ of simple graphs on the vertex set $V$ such that $\mathcal{E}$ is closed under deletion of edges; if $H \in\mathcal{E}$ and $e\in H$, then $H-e \in \mathcal{E}$. Identifying $H= (V,E) \in \mathcal{E}$ with the edge set $E$, we may interpret $\mathcal{E}$ as a simplicial complex. There are potentially different graph complexes on a given vertex set $V$ because the collection of simple graphs can be chosen in a different way.

With a slight modification to Jonsson's definition, namely adding a hypothesis that the simple graphs in $\mathcal{E}$ are subgraphs of $G$, we retain the central ideas of Jonsson's construction but also gain control over the space of subgraphs. In contrast to clique complexes and neighborhood complexes, the face operations in Jonsson's graph complex are given by deleting edges. Also, the construction of a graph complex is not fully determined by $G$ as there are various choices for families $\mathcal{E}$ of simple subgraphs of $G$ that can form graph complexes. A non-deterministic characteristic of these complexes might be useful in data science as the family $\mathcal{E}$ can be adjusted for each iteration of the analysis.

\subsection{Path Complexes}
Considering hypergraphs as a combinatorial generalization of simplicial complexes allows the construction of the path complex of a given digraph.

The topological exploration of path complexes was first introduced by Shing-Tung Yau and his collaborators in a series of papers ~\cite{GLMY-2,GLMY-1,GLMY0,GLMY1,GLMY2,GLMY3,GLMY3-5,GLMY4,GLMY5,GLMY6}. Motivated by ideas from physical applications, A. Dimakis and F. M\"uller-Hoissen attempted to construct the cohomology of digraphs~\cite{Dimakis-Muller-Hoissen1,Dimakis-Muller-Hoissen2}. They considered path complexes on an intuitive level without a precise definition of the corresponding cochain complex.

In this subsection, we survey the main ideas of path complexes of simple digraphs.

Let $G$ be a simple digraph. A \textit{directed path} in $G$ is an alternating sequence $\lambda=v_0\alpha_1v_1\alpha_2v_2\cdots\alpha_kv_k$, with all vertices $v_i$ distinct for $0\leq i\leq k$ and the edges, $\alpha_i$, are incident out of $v_{i-1}$ and incident into $v_i$ for $1\leq i\leq k$.

Let $\mathcal{P}$ be the set of directed paths in $G$. We want to associate a combinatorial object to $G$ built out of directed paths. Since $G$ is simple, there is at most one edge joining two distinct vertices. So a directed path $\lambda=v_0\alpha_1v_1\alpha_2v_2\cdots\alpha_kv_k$ is determined by its vertices $v_0,v_1,\ldots, v_k$. Thus we consider $\lambda=v_0\alpha_1v_1\alpha_2v_2\cdots\alpha_kv_k$ as an abstract $k$-simplex $\{v_0,v_1,\ldots,v_k\}$. For $\mathcal{P}$ to be a simplicial complex, any nonempty subset of $\{v_0,v_1,\ldots,v_k\}$ must be a simplex. In other words, any subsequence $(v_{i_0}, v_{i_1}, v_{i_2},\ldots,v_{i_t})$, $0\leq i_0<i_2<\cdots<i_t\leq k$ of $\lambda$ must forms a directed path in $G$. This is not true in general. For example, if $v_0\alpha_1v_1\alpha_2v_2$ is a directed path in $G$, then there may not exist an edge incident out of $v_0$ and incident into $v_2$ in $G$, that is, $(v_0,v_2)$ may not form a directed path. Therefore, a structure to consider on the set $\mathcal{P}$ is that of a hypergraph.

\begin{definition}\label{def:hypergraph}
    A \textit{hypergraph} $\H$ is a pair $\H=(V_\H,\mathcal{E}_\H)$, where the vertex set $V_\H$ is a finite or infinite set and the hyperedge set $\mathcal{E}_\H$ is a collection of finite nonempty subsets of  $V_\H$.
\end{definition}

The set $\mathcal{P}$ becomes a hypergraph with its vertex $V(G)$ and the hyperedge set given by directed paths in $G$. By definition, an abstract simplicial complex is a hypergraph with the additional condition that any nonempty subset of a hyperedge is a hyperedge. Therefore, a hypergraph can be viewed as a simplicial complex with some faces missing, where a hyperedge of cardinal $k+1$ is a $k$-simplex in the terminology of simplicial complexes. The approaches in~\cite{Dimakis-Muller-Hoissen1,Dimakis-Muller-Hoissen2} and Yau's school lead to the embedded homology of hypergraphs as an extension of simplicial homology theory as introduced in~\cite{BLRW}.

By allowing vertex repetition in directed paths, we get directed walks. The walk complex $\mathcal{W}(G)$ for a digraph or quiver (i.e. directed multi-graph) $G$, is similar to the path complex but with we replace directed paths with directed walks. Therefore, $\mathcal{W}(G)$ is an extension of the notion of the nerve of a category in the following sense. Consider a category $\mathcal{C}$ as a quiver with the composition operation on head-to-tail arrows. Then the nerve of category $\mathcal{C}$ is the walk complex of quiver $\mathcal{C}$.

\subsection{Introducing new topological structures on collections of subgraphs}\label{subsec:new_top_structures}
 In our novel approach, we seek to provide a natural extension of simplicial complexes and hypergraphs and their homologies to allow us to understand the structures of the various spaces related to graphs from a topological perspective. In doing so we will be constructing topology on spaces that look like ``partial'' $\Delta$-sets. 

Let $G$ be a directed/undirected (multi-)graph, and let $\H$ be a collection of subgraphs of $G$. Before looking at possible topological structures on $\H$, let us first recall how abstract simplical complexes and $\Delta$-sets can be realized as topological spaces.

Recall that an abstract simplicial complex is a $\Delta$-set under a choice of total order on its vertices. Let $X_*$ be a $\Delta$-set. Then each element in $X_n$ represents an abstract $n$-simplex. In the geometric realization $|X_*|$, one assigns a standard geometric $n$-simplex to each element $x\in X_n$ and the face operation $d_i\colon X_n\to X_{n-1}$, $x\mapsto d_i(x)$, induces a corresponding gluing of the geometric $(n-1)$-simplex labeled by $d_i(x)$ into the $i$-th face of the geometric $n$-simplex labeled by $x$. (See~\cite{Wu} for details.)

We need two pieces of information about $\H$ to establish a topological structure. The first, is a partition on the set $\H$ by a function $f\colon \H\longrightarrow \mathbb{N}=\{0,1,2,\ldots\}$. The subset $\H_n=f^{-1}(n)$ will be viewed as the set of $n$-simplices. The second, is a definition of face operations $d_i\colon \H_n\to \H_{n-1}$, $0\leq i\leq n$, satisfying the $\Delta$-identity ~(\ref{equ:delta_set}). This is important for establishing a topological structure as it plays the role for gluing corresponding geometric simplices together.

The path complex as an example of a subspace of subgraphs suggests that face operations $d_i\colon \H_n\to \H_{n-1}$ may be only partially defined on spaces of subgraphs. However, we still wish to have a homology theory of these objects. This leads to the following definitions.

\begin{definition}\label{def:sup_hyp_graph}
    A \textit{super-hypergraph} is a pair $(\H, X)$, where $X$ is a $\Delta$-set and $\H$ is a graded subset of $X$. We call $\H$ a \textit{super-hypergraph born from $X$}, and $X$ is called \textit{a parental $\Delta$-set} of $\H$. The \textit{$\Delta$-closure} of $\H$ in $X$ is defined by
    $$
    \Delta^X(\H)=\bigcap\{ Y \ | \ \H\subseteq Y \textrm{ as a graded subset and } Y\subseteq X \textrm{ as a $\Delta$-subset}\}.
    $$
\end{definition}

\begin{definition}\label{def:sup_hyp_graph_morphisms}
A \textit{morphism} $\phi\colon (\H, X)\lra (\H', Y)$ of super-hypergraphs is a $\Delta$-map $\phi\colon X\lra Y$ such that $\phi(\H)\subseteq \H'$.
\end{definition}

In the next sections, we will develop a homology theory of super-hypergraphs despite the fact that there are only partially defined face operations.  In the next two subsections, we consider some natural face operations to create topological structures.

\subsection{Vertex-deletion topology}
Let $G$ be a directed/undirected (multi-)graph. Let $\H$ be a collection of finite subgraphs of $G$. Assign to $\H$ the grading function $f_v\colon \H\longrightarrow \mathbb{N}=\{0,1,2,\ldots\}$ given by the size, namely, for $H\in\H$, let $f_v(H)=|V(H)|-1$. Let $\H_n=f^{-1}_v(n)$. Next step is to define face operations to obtain a topological structure. There are several natural approaches available.

\subsubsection{Primary vertex-deletion topology} Assume that the vertex set $V(G)$ is totally ordered. A geometric way to define face operations is to delete a vertex together with all edges incident to this vertex. More precisely, let $H\in \H_n$ with vertices $v_0,v_1,\ldots,v_n$. Define $d_i(H)$ for $0\leq i\leq n$ to be the subgraph of $H$ by deleting $v_i$ together with any edges joining with $v_i$. This vertex deletion does not ensure that $d_i(H)$ lies in $\H_{n-1}$. Let
\begin{equation}\label{equation2.2}
\Delta(\H)=\{d_{i_1}d_{i_2}\cdots d_{i_t}(H) \ | \ H\in \H, \quad 0\leq i_1<i_2<\cdots<i_t\leq |V(H)|-1\}
\end{equation}
be the family of subgraphs of $G$ obtained from $\H$ together with iterated faces on the subgraphs in $\H$. It is straightforward to check that $\Delta(\H)$ is a $\Delta$-set, and $\H\subseteq \Delta(\H)$ is a graded subset\footnote{From the $\Delta$-identity~(\ref{def:delta_set}), $\Delta(H)$ contains all iterated faces on the subgraphs in $\H$, which is the smallest family of subgraphs of $G$ containing $\H$ that is closed under the face operation.}. Hence $(\H, \Delta(\H))$ is a super-hypergraph.

\begin{definition}\label{definition2.4}
    Let $G$ be a directed/undirected (multi-)graph. Let $\H$ be a collection of finite subgraphs of $G$. The \textit{primary vertex-deletion topological structure} on $\H$ is the super-hypergraph structure defined as above.
\end{definition}

Similarly to clique complexes on multi-graphs, $\Delta(\H)$ may not be a simplicial complex in general. Therefore, the notion of a super-hypergraph is the most natural and suitable topological description for $\H$. 

The super-hypergraph $(\H, \Delta(\H))$ has a structure of fibrewise topology as follows.

Let
$$
V(\H)=\{V(H) \ | \ H\in \H\} \textrm{ and } V(\Delta(\H))=\{V(H) \ | \ H\in \Delta(\H)\}
$$
be a family of finite subsets of $V(G)$. Then $V(\Delta(\H))$ is a simplicial complex, and $V(\H)$ is a hypergraph whose simplicial closure is $V(\Delta(\H))$. Moreover we have a $\Delta$-map
$$
V\colon \Delta(\H)\longrightarrow V(\Delta(\H))
$$
and a morphism of super-hypergraphs
$$
V\colon \H\longrightarrow V(\H).
$$
By taking geometric realization, we have a continuous map
$$
|V|\colon |\Delta(\H)|\longrightarrow |V(\Delta(\H))|
$$
which is a fibrewise topology in the sense of James~\cite{James}.

Clique complexes are typical examples of primary vertex-deletion topology, where $\H$ is given by cliques in a grpah $G$. In this case, $\H$ itself is already a $\Delta$-set so $\H=\Delta(\H)$ and the map $V\colon \H\to V(\H)$ is an isomorphism.

The neighborhood complex is another good example that admits a fibrewise topological structure as follows. Let
\begin{equation}\label{equation2.3}
\widetilde{\mathcal{N}(G)}=\{ H \ | \ H \textrm{ is a subgraph of } G \textrm{ and } V(H)\in \mathcal{N}(G)\}.
\end{equation}
Then it is straightforward to check that
\begin{equation}\label{equation2.4}
\widetilde{\mathcal{N}(G)}=\Delta(\widetilde{\mathcal{N}(G)})\textrm{ and }V(\widetilde{\mathcal{N}(G)})=\mathcal{N}(G)
\end{equation}
with a continuous map
\begin{equation}\label{equation2.5}
|V|\colon |\widetilde{\mathcal{N}(G)}|\longrightarrow |\mathcal{N}(G)|
\end{equation}
which is called a \textit{fibrewise neighborhood topology} of $G$.

\subsubsection{Secondary vertex-deletion topology}
Consider the path complex of a simple digraph $G$ and its face operation $d_i$. Let $\lambda=v_0\alpha_1v_1\alpha_2v_2\cdots\alpha_nv_n$ be a directed path. Then $d_i(\lambda)$ is given by deleting the vertex $v_i$. However, we have to add back the directed edge from $v_{i-1}$ to $v_{i+1}$ provided that it exists to ensure that $d_i(\lambda) \in \mathcal{P}_n$. This gives a different type of topological structure, in which we need to redefine the edges to match the vertex removal of the face operation. This can be generalized in the following way.

Let $G$ be a directed/undirected simple graph and let $\H$ be a family of finite subgraphs of $G$. Let the vertex set $V(G)$ be totally ordered. For $H\in \H$, as a finite subgraph of $G$ with vertices $v_0<v_1<\cdots<v_n$, define $d_iH$ to be the subgraph of $G$ by removing $v_i$ from $H$ and adding the edge between $v_{i-1}$ and $v_{i+1}$ if it exists. Then $\H$ forms a super-hypergraph in a similar way as in the case of primary vertex-deletion topology. Here the notion of a super-hypergraph is necessary because there could be two subgraphs in $\H$ sharing same vertices. For instance, if there is an edge joining two distinct vertices $v$ and $w$ in $G$, then the subgraphs consist of two vertices $v$ and $w$ with the edge joining them and without the edge joining them, respectively, are different.

\begin{definition}\label{definition2.5}
Let $G$ be a directed/undirected simple graph. Let $\H$ be a collection of finite subgraphs of $G$. The \textit{secondary vertex-deletion topological structure} on $\H$ is the super-hypergraph structure defined as above.
\end{definition}

The secondary vertex-deletion topology naturally applies to subgraphs of a simple graph. However,  to construct a topological structure on a space of subgraphs of a multi-graph in this a way would be more complicated.

There are other possible topological structures on special families of subgraphs. Analogously to various techniques developed in simplicial homotopy theory,  for special families of subgraphs having good patterns, one could delete more than one vertex under each elementary face operation $d_i$.

\subsection{Edge-deletion topology}

Let $G$ be a directed/undirected (multi-)graph and $\H$ be a collection of finite subgraphs of $G$. Another reasonable way to assign the grading function $f_e\colon \H\longrightarrow \mathbb{N}=\{0,1,2,\ldots\}$ is by counting edges, that is, for $H\in\H$ let $f_e(H)=|E(H)|-1$. Then $\H_n=f^{-1}_e(n)$. Note that a subgraph $H$ of $G$ is uniquely determined by its edge set $E(H)$. We do not need to use the notion of a $\Delta$-set for describing topological structure on $\H$ from edge-deletion. If $\H$ is closed under edge-deletion operation, then it forms a simplicial complex, which is exactly a path complex in the sense of Jonsson. Otherwise, $\H$ is only a hypergraph.

For a fixed graph $G$, the edge-deletion topology could be quite different from the vertex-deletion topology because already the grading functions $f_v$ and $f_e$ could be quite different. The edge-deletion operation may not commute with the vertex-deletion operation, so the relationship between the edge-deletion topology and the vertex-deletion topology is not immediately clear. To better understand these structures, more exploration of the relationship between different topological structures on families of subgraphs is needed.

Finally, we should point out that there are many other ways to introduce topological structures on subgraphs, for example following ideas related to Hom complexes. The frontier of research in topological combinatorics has potential to provide new mathematical tools in data science.

\section{Homology Theory on Super-hypergraphs}\label{homology}
Recently, homology of hypergaphs has opened new avenues for using topological tools in data analysis. Hypergraphs have been used for data analytics in various areas of sciences from social networks to molecular bioscience. In Section~\ref{sec:topstructures} we defined super-hypergraphs. These objects are important for understanding the different explorations of topological structures on spaces of subgraphs. This setting realizes our aim to establish a unified approach to explore data science using topological combinatorics. The purpose of this section is to establish a homology theory of super-hypergraphs as a natural extension of simplicial homology and homology of hypergraphs.

\subsection{Algebraic Lemmas.}
The following algebraic tools will be needed to define a homology theory of super-hypergraphs. Although we will only make use of chain complexes of abelian groups, we note that using simplicial group models, homotopy groups can be combinatorially defined using Moore chain complexes, which are chain complexes of possibly non-abelian groups~\cite{Curtis,Wu}. There are many studies of the homotopy type of topological structures of subgraphs as indicated in the references in Section~\ref{sec:topstructures}. Therefore, we consider chain complexes of possibly non-abelian groups so that the results in this subsection may be relevant for future research.

A graded group $G_*=\{G_n\}_{n\in\Z}$ is a sequence of groups $G_n$ indexed by the integers. A graded subgroup $G'_*=\{G'_n\}_{n\in \Z}$ of $G_*=\{G_n\}_{n\in\Z}$ is a sequence of subgroups $G'_n$ such that $G'_n \leq G_n$ for $n\in\Z$. A chain complex $G_*$ of groups is a graded group $G_*$ with a group homomorphism $\partial_n=\partial_n^{G_*}\colon G_n\lra G_{n-1}$ for $n\in\Z$ such that the composite
$$
\partial_{n-1}\circ\partial_n\colon G_n\longrightarrow G_{n-2}
$$
is the trivial homomorphism. Let us emphasise that in this definition we do not require $G_n$ to be abelian. A subcomplex $C_*$ of $G_*$ is a graded subgroup $C_*$ of $G_*$ such that
$$
\partial^{G_*}_n(C_n)\subseteq C_{n-1}
$$
for each $n\in\Z$. So $C_*$ together with the restrictions $\partial^{G_*}_n|_{C_n}\colon C_n\lra C_{n-1}$ forms a chain complex.

\begin{definition}
Let $G_*$ be a chain complex of groups and let $D_*$ be a graded subgroup of $G_*$. Define
\begin{align*}
    \sup^{G_*}_*(D_*)&=\bigcap\{C_* \ | \ D_n \leq C_n \textrm{ for } n\in\Z, \textrm{ and } C_* \textrm{ is a subcomplex of } G_*\}\\
    \inf^{G_*}_*(D_*)&=\prod\{E_* \ | \ E_n \leq D_n \textrm{ for } n\in \Z, \textrm{ and } E_* \textrm{ is a subcomplex of } G_*\}.
\end{align*}
For simplicity, if the embedding of $D_*\subseteq G_*$ is clear, we denote $\sup^{G_*}_*(D_*)$ by $\sup_*(D_*)$ and $\inf^{G_*}_*(D_*)$ by $\inf_*(D_*)$.
\end{definition}

\begin{proposition}\label{proposition9.2}
Let $G_*$ be a chain complex of groups and let $D_*$ be a graded subgroup of $G_*$. Then
\begin{enumerate}
\item[{\normalfont (1)}] $\sup_*(D_*)$ is the smallest subcomplex of $G_*$ containing $D_*$. Moreover, $$\sup_n(D_*)=D_n\cdot \partial^{G_*}_{n+1}(D_{n+1})$$ is the product of $D_n$ and $\partial^{G_*}_{n+1}(D_{n+1})$.
\item[{\normalfont (2)}] $\inf_*(D_*)$ is the largest subcomplex of $G_*$ contained in $D_*$. Moreover, $$\inf_n(D_*)=D_n\cap \partial^{-1}_n(D_{n-1})$$
is the intersection of $D_n$ and $\partial^{-1}_n(D_{n-1})$.
\end{enumerate}
\end{proposition}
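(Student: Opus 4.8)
The plan is to treat both parts with a single template: each of $\sup_*(D_*)$ and $\inf_*(D_*)$ is asserted to be an extremal subcomplex (smallest containing, resp.\ largest contained in, $D_*$), together with a closed formula level by level. First I would record the two structural facts that make the extremal characterizations immediate. An arbitrary intersection of subcomplexes of $G_*$ is again a subcomplex, since intersecting subgroups levelwise yields subgroups and $\partial_n(\bigcap_\alpha C_n^{(\alpha)})\subseteq\bigcap_\alpha\partial_n(C_n^{(\alpha)})\subseteq\bigcap_\alpha C_{n-1}^{(\alpha)}$; dually, the product (the subgroup generated by the union) of subcomplexes is a subcomplex, because a homomorphism sends a generated subgroup to the subgroup generated by the images, so $\partial_n$ carries $\prod_\alpha E_n^{(\alpha)}$ into $\prod_\alpha E_{n-1}^{(\alpha)}$. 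Since $G_*$ itself lies in the family defining $\sup$, and the trivial subcomplex lies in the family defining $\inf$, both families are nonempty, and the definitions then directly exhibit $\sup_*(D_*)$ as the smallest subcomplex containing $D_*$ and $\inf_*(D_*)$ as the largest subcomplex contained in $D_*$.

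For the formula in (1), I would show that the graded subgroup $S_*$ with $S_n=\langle D_n,\partial_{n+1}(D_{n+1})\rangle$ is precisely this smallest subcomplex. The decisive input is $\partial_n\partial_{n+1}=0$: it forces the one-step closure of $D_*$ under boundaries to be already closed, so no iteration is needed. Concretely $\partial_n(S_n)=\langle\partial_n(D_n),\partial_n\partial_{n+1}(D_{n+1})\rangle=\langle\partial_n(D_n)\rangle\subseteq S_{n-1}$, so $S_*$ is a subcomplex; it clearly contains $D_*$; and any subcomplex $C_*\supseteq D_*$ satisfies $C_n\geq D_n$ and $C_n\supseteq\partial_{n+1}(C_{n+1})\supseteq\partial_{n+1}(D_{n+1})$, whence $C_n\supseteq S_n$. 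Minimality then gives $S_*=\sup_*(D_*)$.

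For the formula in (2), the candidate is $I_*$ with $I_n=D_n\cap\partial_n^{-1}(D_{n-1})$, a subgroup of $G_n$ as an intersection of the subgroup $D_n$ with the preimage subgroup $\partial_n^{-1}(D_{n-1})$. Again $\partial^2=0$ does the work: for $g\in I_n$ one has $\partial_n(g)\in D_{n-1}$ by definition and $\partial_{n-1}\partial_n(g)=e\in D_{n-2}$ automatically, so $\partial_n(g)\in D_{n-1}\cap\partial_{n-1}^{-1}(D_{n-2})=I_{n-1}$; thus $I_*$ is a subcomplex contained in $D_*$. Conversely, if $E_*$ is any subcomplex with $E_n\leq D_n$ for all $n$, then for $g\in E_n$ we get $g\in D_n$ and $\partial_n(g)\in E_{n-1}\leq D_{n-1}$, so $g\in I_n$; hence $I_*$ is the largest such subcomplex and equals $\inf_*(D_*)$.

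The step I expect to be the main obstacle is the assertion in (1) that $\sup_n(D_*)$ is literally the \emph{set product} $D_n\cdot\partial_{n+1}(D_{n+1})$, i.e.\ that this product is already a subgroup and hence coincides with $\langle D_n,\partial_{n+1}(D_{n+1})\rangle$. When $G_n$ is abelian this is automatic, as a sum of subgroups is a subgroup. For possibly non-abelian $G_n$ the set product of two subgroups need not be closed under multiplication, so one must either verify that $D_n$ normalizes $\partial_{n+1}(D_{n+1})$ (so that $D_n\cdot\partial_{n+1}(D_{n+1})=\partial_{n+1}(D_{n+1})\cdot D_n$ and the product is a subgroup) or read the product as the generated subgroup throughout. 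No such difficulty arises in (2), where $\partial_n^{-1}(D_{n-1})$ is a genuine subgroup and the intersection formula holds verbatim even without commutativity.
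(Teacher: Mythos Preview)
Your argument is correct and follows essentially the same route as the paper: for each part you exhibit the explicit candidate, verify it is a subcomplex using $\partial^2=1$, and check the appropriate extremality by comparing against an arbitrary subcomplex $C_*\supseteq D_*$ (resp.\ $E_*\subseteq D_*$). The paper's proof is organized identically, though it does not separately record the closure of the families under intersection and product as you do.

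Your caveat about part~(1) in the non-abelian case is well taken and in fact sharper than the paper. The paper writes $\tilde D_n=D_n\cdot\partial_{n+1}(D_{n+1})$ and, after checking $\partial_n(\tilde D_n)\subseteq\tilde D_{n-1}$, asserts that $\tilde D_*$ is a subcomplex, without addressing whether the set product is a subgroup. Your observation that this requires either a normalizer condition or that the product be read as the generated subgroup is exactly right; indeed, the very next proposition in the paper imposes the hypothesis that $\partial_{n+1}(D_{n+1})$ lie in the normalizer of $D_n$ to get the isomorphism on homology, which confirms that the authors are implicitly working with that convention. Since all downstream applications in the paper take abelian coefficients, the issue is harmless there, but your formulation with $\langle D_n,\partial_{n+1}(D_{n+1})\rangle$ is the cleaner statement in the generality claimed.
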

\begin{proof}

(1) The first part follows from the definition. Let
$$
\tilde D_n=D_n\cdot \partial^{G_*}_{n+1}(D_{n+1})
$$
for $n\in\Z$. Let $C_*$ be any subcomplex of $G_*$ such that $D_n\leq C_n$ for each $n\in\Z$. Then
$$
\partial^{G_*}_{n+1}(D_{n+1})\leq \partial^{G_*}_{n+1}(C_{n+1})\leq C_n
$$
and so
$$
\tilde D_n=D_n\cdot \partial^{G_*}_{n+1}(D_{n+1})\leq C_n.
$$
Thus $\tilde D_*$ is a graded subgroup of $C_*$ for any subcomplex $C_*$ of $G_*$ with $D_n\leq C_n$ for $n\in\Z$, and so $\tilde D_*$ is a graded subgroup of $\sup_*(D_*)$. Notice that
$$
\begin{array}{rcl}
\partial^{G_*}_n(\tilde D_*)&=&\partial^{G_*}_n(D_n\cdot \partial^{G_*}_{n+1}(D_{n+1}))\\
&\leq& \partial^{G_*}_n(D_n)\cdot \partial^{G_*}_n(\partial^{G_*}_{n+1}(D_{n+1}))\\
&=&\partial^{G_*}_n(D_n)\\
&\leq& \tilde D_{n-1}.\\
\end{array}
$$
Hence $\tilde D_*$ is a subcomplex of $G_*$ containing $D_*$, and so $\sup_*(D_*)=\tilde D_*$.

\vspace{.5cm}
(2)  The first part follows from the definition. Let
$$
\check D_n=D_n\cap \partial^{-1}_n(D_{n-1}).
$$
Let $x\in \check D_n$. Then $\partial_n(x)\in D_{n-1}$ because $x\in \partial^{-1}_n(D_{n-1})$, and
$$
\partial_n(x)\in \partial^{-1}_{n-1}(D_{n-2})
$$
because $\partial_{n-1}(\partial_n(x))=1\in D_{n-2}$. Thus $\partial_n(x)\in \check D_{n-1}$. It follows that $\check D_*$ is a subcomplex of $G_*$ contained in $D_*$. Hence
$$
\check D_*\leq \inf_*(D_*).
$$
Let $E_*$ be any subcomplex of $G_*$ such that $E_n\leq D_n$ for $n\in\Z$. Then
$$
E_n\leq \partial^{-1}_n(E_{n-1})\leq \partial^{-1}_n(D_{n-1}).
$$
Thus $E_n\leq \check D_n$ for $n\in\Z$. It follows that $\inf_*(D_*)\leq \check D_*$. This finishes the proof.
\end{proof}

Let $G_*$ be a chain complex of groups. The homology of $G_*$ is defined 
as the right cosets
$$
H_n(G_*)=\Ker(\partial^{G_*}_n)/\partial^{G_*}_{n+1}(G_{n+1}).
$$

\begin{proposition}\label{thm:inf->sup_inclus}
Let $G_*$ be a chain complex of groups and let $D_*$ be a graded subgroup of $G_*$.
\begin{enumerate}
\item[{\normalfont (1)}] The inclusion
$$
\inf_*(D_*)\longrightarrow \sup_*(D_*)
$$
induces an injective map on homology.
\item[{\normalfont (2)}] Suppose that $\partial^{G_*}_{n+1}(D_{n+1})$ is contained in the normalizer of $D_n$ in $G_n$ for each $n$. Then the inclusion
$$
\inf_*(D_*)\longrightarrow \sup_*(D_*)
$$
induces an isomorphism on homology. In particular, if $D_n$ is normal in $G_n$ for $n\in\Z$, then the inclusion $\inf_*(D_*)\longrightarrow \sup_*(D_*)$ induces an isomorphism on homology.
\end{enumerate}
\end{proposition}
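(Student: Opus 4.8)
The plan is to compute the cycles and boundaries of both subcomplexes explicitly and then to recognise the induced map as the coset bijection underlying the second isomorphism theorem. Write $C_n=\partial_{n+1}(D_{n+1})$ and note $C_n\subseteq\Ker\partial_n$. Using Proposition~\ref{proposition9.2} together with the relation that $\partial_n\partial_{n+1}$ is trivial, I would first establish
\[
\Ker\partial_n\cap\inf_n(D_*)=D_n\cap\Ker\partial_n,\qquad \partial_{n+1}(\inf_{n+1}(D_*))=D_n\cap C_n,
\]
the first because $\Ker\partial_n\subseteq\partial_n^{-1}(D_{n-1})$, the second from the general identity $f(X\cap f^{-1}(Y))=f(X)\cap Y$ applied to $f=\partial_{n+1}$. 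On the other side, since $\partial_n(dc)=\partial_n(d)$ for $c\in C_n$ and since $\partial_{n+1}$ annihilates $\partial_{n+2}(D_{n+2})$, I would show
\[
\Ker\partial_n\cap\sup_n(D_*)=(D_n\cap\Ker\partial_n)\cdot C_n,\qquad \partial_{n+1}(\sup_{n+1}(D_*))=C_n.
\]
Abbreviating $A=D_n\cap\Ker\partial_n$, the degree-$n$ homology of $\inf_*(D_*)$ is then the coset space $A/(A\cap C_n)$, that of $\sup_*(D_*)$ is $(A\cdot C_n)/C_n$, and the inclusion sends the class of $z\in A$ to the same $z$ read modulo $C_n$.

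For part (1) I would argue injectivity directly at the level of cosets: if $z,z'\in A$ represent the same class modulo $C_n$, then $z^{-1}z'\in C_n$, while $z^{-1}z'\in A\subseteq D_n$, so $z^{-1}z'\in D_n\cap C_n=A\cap C_n$ and hence $z,z'$ already represent the same class in $\inf_*(D_*)$. This uses nothing beyond the facts that $A$ is a subgroup and that $\partial_{n+1}(\sup_{n+1}(D_*))=C_n$, and the latter holds with no hypothesis on $D_*$, even when $\sup_n(D_*)$ is the subgroup $\langle D_n,C_n\rangle$ rather than the product set; so injectivity is unconditional. For part (2) the remaining task is surjectivity. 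Under the normalizer hypothesis $C_n\subseteq N_{G_n}(D_n)$ the product $D_n\cdot C_n$ is a genuine subgroup, which is precisely what makes the product description $\sup_n(D_*)=D_n\cdot C_n$ of Proposition~\ref{proposition9.2} valid and forces $\Ker\partial_n\cap\sup_n(D_*)=A\cdot C_n$. Every class of $(A\cdot C_n)/C_n$ then has a representative $zc$ with $z\in A$ and $c\in C_n$, and $zc\,C_n=z\,C_n$ is the image of the class of $z$; combined with (1) this yields a bijection, i.e. an isomorphism on homology. The special case $D_n\triangleleft G_n$ is subsumed, since then $C_n$ automatically normalises $D_n$.

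I expect the main obstacle to lie in the non-abelian bookkeeping underpinning part (2), namely in pinning down $\Ker\partial_n\cap\sup_n(D_*)$. Without commutativity one cannot freely split an element of $\sup_n(D_*)$ as a product of an element of $D_n$ and an element of $C_n$: in general $\sup_n(D_*)$ is only the subgroup $\langle D_n,C_n\rangle$, which may strictly contain the product set $D_n\cdot C_n$, and then a cycle of $\sup_*(D_*)$ need not be congruent modulo $C_n$ to a cycle lying in $D_n$, so surjectivity fails. The role of the normalizer hypothesis is exactly to collapse $\langle D_n,C_n\rangle$ to $D_n\cdot C_n$; verifying that this collapse both restores the product formula of Proposition~\ref{proposition9.2} and delivers the factorisation $\Ker\partial_n\cap\sup_n(D_*)=(D_n\cap\Ker\partial_n)\cdot C_n$ is the step that requires care, whereas part (1) and the computation of the four cycle and boundary groups are essentially formal.
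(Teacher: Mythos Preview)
Your argument is correct and follows essentially the same route as the paper: compute the cycle and boundary groups of $\inf_*(D_*)$ and $\sup_*(D_*)$ explicitly via Proposition~\ref{proposition9.2}, deduce injectivity from $z^{-1}z'\in C_n\cap D_n$, and obtain surjectivity under the normalizer hypothesis by factoring a cycle of $\sup_*(D_*)$ as $xc$ with $x\in D_n\cap\Ker\partial_n$ and $c\in C_n$. Your identification of the induced map with the second-isomorphism-theorem bijection $A/(A\cap C_n)\to (A\cdot C_n)/C_n$ is a clean way to package what the paper does elementwise, and your remark that injectivity survives even when $\sup_n(D_*)=\langle D_n,C_n\rangle$ rather than the product set (because $\partial_{n+1}(\sup_{n+1}(D_*))=C_n$ regardless) is a useful sharpening of the paper's presentation.
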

\begin{proof}

(1) From Proposition~\ref{proposition9.2} (2),
$$
H_n(\inf_*(D_*))=(D_n\cap \partial^{-1}_n(D_{n-1})\cap \Ker(\partial^{G_*}_n))/\partial_{n+1}(D_{n+1}\cap \partial^{-1}_{n+1}(D_n))
$$
as right cosets. Since $\Ker(\partial^{G_*}_n)\leq \partial^{-1}_n(D_{n-1})$, we have
$$
D_n\cap \partial^{-1}_n(D_{n-1})\cap \Ker(\partial^{G_*}_n)=D_n\cap \Ker(\partial^{G_*}_n).
$$
We also claim that
$$
\partial_{n+1}(D_{n+1}\cap \partial^{-1}_{n+1}(D_n))=D_n\cap \partial_{n+1}(D_{n+1}).
$$
Clearly, $\partial_{n+1}(D_{n+1}\cap \partial^{-1}_{n+1}(D_n))\leq D_n\cap \partial_{n+1}(D_{n+1})$.

Let $x\in D_n\cap \partial_{n+1}(D_{n+1})$ and let $y\in D_{n+1}$ such that $\partial_{n+1}(y)=x$. Then $y\in D_{n+1}\cap \partial_{n+1}^{-1}(D_n)$. Thus $x\in \partial_{n+1}(D_{n+1}\cap \partial^{-1}_{n+1}(D_n))$. Hence $\partial_{n+1}(D_{n+1}\cap \partial^{-1}_{n+1}(D_n))=D_n\cap \partial_{n+1}(D_{n+1})$ and so
$$
H_n(\inf_*(D_*))=(D_n\cap \Ker(\partial^{G_*}_n))/(D_n\cap \partial_{n+1}(D_{n+1})).
$$

From Proposition~\ref{proposition9.2} (1),
$$
H_n(\sup_*(D_*))=((D_n\cdot \partial^{G_*}_{n+1}(D_{n+1}))\cap \Ker(\partial^{G_*}_n))/\partial^{G_*}_{n+1}(D_{n+1}\cdot \partial^{G_*}_{n+2}(D_{n+2})).
$$
Since $\partial^{G_*}_{n+1}(\partial^{G_*}_{n+2}(D_{n+2}))=\{1\}$,
$
\partial^{G_*}_{n+1}(D_{n+1}\cdot \partial^{G_*}_{n+2}(D_{n+2}))=\partial^{G_*}_{n+1}(D_{n+1}).
$
Thus
$$
H_n(\sup_*(D_*))=((D_n\cdot \partial^{G_*}_{n+1}(D_{n+1}))\cap \Ker(\partial^{G_*}_n))/\partial^{G_*}_{n+1}(D_{n+1}).
$$

Let $w_1,w_2\in D_n\cap \Ker(\partial^{G_*}_n)$ such that $w_1\equiv w_2$ in $H_n(\sup_*(D_*))$. Then there exists $y=\partial^{G_*}_{n+1}(D_{n+1})$ such that $w_2=w_1y$. Note that
$$
y=w_1^{-1}w_2\in D_n\cap \Ker(\partial^{G_*}_n)\leq D_n.
$$
We have $y\in D_n\cap \partial^{G_*}_{n+1}(D_{n+1})$ with $w_2=w_1y$. Thus $w_1\equiv w_2$ in $H_n(\inf_*(D_*))$. So
$$
H_n(\inf_*(D_*))\longrightarrow H_n(\sup_*(D_*))
$$
is injective. This proves (1).

(2) Let $w\in (D_n\cdot \partial^{G_*}_{n+1}(D_{n+1}))\cap \Ker(\partial^{G_*}_n)$. Then $w\in D_n\cdot \partial^{G_*}_{n+1}(D_{n+1})$ and so
$$
w=x_1y_1x_2y_2\cdots x_my_m
$$
with $x_i\in D_n$ and $y_i\in \partial^{G_*}_{n+1}(D_{n+1})$ for $1\leq i\leq m$.
Since $\partial^{G_*}_{n+1}(D_{n+1})$ is contained in the normalizer of $D_n$, the product
$$
w=x_1(y_1x_2y_1^{-1})(y_1y_2x_3y_2^{-1}y_1^{-1})\cdots (y_1\cdots y_{m-1}x_m y_{m-1}^{-1}\cdots y_1^{-1}) y_1\cdots y_m=xy
$$
with
$$
x=x_1(y_1x_2y_1^{-1})(y_1y_2x_3y_2^{-1}y_1^{-1})\cdots (y_1\cdots y_{m-1}x_m y_{m-1}^{-1}\cdots y_1^{-1})\in D_n
$$
and
$$
y=y_1y_2\cdots y_m\in \partial^{G_*}_{n+1}(D_{n+1}).
$$
Since $y\in \Ker(\partial^{G_*}_n)$ and $w\in \Ker(\partial^{G_*}_n)$,
$$
x=wy^{-1}\in \Ker(\partial^{G_*}_n).
$$
It follows that $x\in D_n\cap \Ker(\partial^{G_*}_n)$, and so
$$
H_n(\inf_*(D_*))\longrightarrow H_n(\sup_*(D_*))
$$
is surjective. From (1), $H_n(\inf_*(D_*))\longrightarrow H_n(\sup_*(D_*))$ is injective and so it is an isomorphism. This finishes the proof.
\end{proof}

\subsection{Hypergraphs.} Let $\H=(V_\H,\mathcal{E}_\H)$ be a hypergraph and let $\mathcal{P}(V_\H)$ be the set of all finite subsets of $V_\H$. The hypothesis in the definition of hypergraph $\H=(V_\H,\mathcal{E}_\H)$ only requires that $\mathcal{E}_\H\subseteq \mathcal{P}(V_\H)\smallsetminus \emptyset$. This is different from the notion of an abstract simplicial complex as hypergraphs do not require $\mathcal{E}_\H$ to be closed under taking subsets.

\begin{definition}\label{simplicial closure}
The \textit{simplicial closure} (or the \textit{associated simplicial complex} as in~\cite{Parks-Lipscomb}) of a hypergraph $\H=(V_\H,\mathcal{E}_\H)$, denoted by $\Delta\H$, is defined as
$$
\Delta\H=\{A\not=\emptyset \ | \ A\subseteq B \text{ for some } B\in \mathcal{E}_\H\}.
$$
\end{definition}

It is straightforward to check that the simplicial closure of $\H$ is the minimal simplicial complex containing $\H$. The homology of $\Delta \H$ has been studied previously in~\cite{Parks-Lipscomb}. However, it is desirable for a homology theory of $\H$ to be directly derived from $\H$ itself rather than the simplicial closure $\Delta\H$. Using Proposition~\ref{thm:inf->sup_inclus}, there is an embedded homology theory of hypergraphs that is an extension of simplicial homology theory.

\begin{definition}\label{def:emb_homology}
Let $\H=(V_\H,\mathcal{E}_\H)$ be a hypergraph with a total ordering on $V_\H$ and let $G$ be an abelian group. Let $C_*(\Delta \H;G)$ be the chain complex with coeffcients in group $G$.  Consider $\Z(\H)\otimes G$ as a graded subgroup of the chain complex of abelian groups $C_*(\Delta \H;G)$. The \textit{embedded homology $H^{\emb}_*(\H;G)$ with coefficients in $G$} is defined by
$$
H^{\emb}_*(\H;G)=H_*(\inf^{C_*(\Delta\H;G)}_*(\Z(\H)\otimes G))\cong H_*(\sup^{C_*(\Delta\H;G)}_*(\Z(\H)\otimes G)).
$$
\end{definition}

The crucial point is that by Proposition~\ref{thm:inf->sup_inclus} the inclusion
$$\inf^{C_*(\Delta\H;G)}_*(\Z(\H)\otimes G)\longrightarrow \sup^{C_*(\Delta\H;G)}_*(\Z(\H)\otimes G)$$
induces an isomorphism on homology. Hence this homology can be considered as a natural topological invariant of $\H$. To detect more subtle information about $\H$, one could explore the acyclic chain complex
$$
\sup^{C_*(\Delta\H;G)}_*(\Z(\H)\otimes G)/\inf^{C_*(\Delta\H;G)}_*(\Z(\H)\otimes G).
$$
For example, when $G$ is a field, one can investigate the Hilbert-Poincar\'e series
$$
\xi^{\emb}(\H,t)=\sum_{n=0}^\infty (\dim (\sup^{C_*(\Delta\H;G)}_n(\Z(\H)\otimes G)/\inf^{C_*(\Delta\H;G)}_n(\Z(\H)\otimes G)))t^n
$$
to detect gaps and get more robust information.

Let $\delta(\H)$ denote the maximal simplicial complex contained in $\H$. In general, $H^{\emb}_*(\H;G)$ is different from $H_*(\delta(\H);G)$ and $H_*(\Delta(\H);G)$ as shown in the following example.
\begin{example}
Let $\H$ be the boundary of a $2$-simplex with all vertices removed, $V_\H=\{0,1,2\}$ and $\mathcal{E}_\H= \{\{\{0,1\}, \{0,2\}, \{1,2\}\}\}$ as depicted in Figure~\ref{fig:hypergraph_homology_H}. Then $\delta(\H)$ is the empty set, and $\Delta(\H)$ is the boundary of the $2$-simplex. By definition, $H^{\emb}_1(\H;\Z)=\Z$ and $H^{\emb}_0(\H;\Z)=0$. Thus $H^{\emb}_*(\H;\Z)$ is different from $H_*(\delta(\H);\Z)$ and $H_*(\Delta(\H);\Z)$.
	\begin{figure}[h]
			\centering
			\begin{minipage}{0.4\textwidth}
			\begin{tikzpicture} [scale=1, inner sep=2mm]
			\coordinate (1) at (0,0);
			\coordinate (2) at (1,2);
			\coordinate (3) at (2,0);
			\node[circle,draw=black, dashed, fill=white, inner sep=0pt,minimum size=5pt, label=below left: 0] at (0,0) {};
				 \node[circle,draw=black, dashed, fill=white, inner sep=0pt,minimum size=5pt, label=above: 2] at (1,2) {};
				 \node[circle,draw,dashed, fill=white, inner sep=0pt,minimum size=5pt, label=below right: 1] at (2,0) {};
			\draw (1) -- (2) --(3) -- (1);
			\end{tikzpicture} 
			\label{fig:hypergraph_homology_H}
			\subcaption{The hypergraph $\H$, where the cross indicates that a vertex is missing.}
			\end{minipage}
			\begin{minipage}{0.4\textwidth}
				\centering
				\begin{tikzpicture} [scale=1, inner sep=2mm]
				\coordinate (1) at (0,0);
				\coordinate (2) at (2,0);
				\coordinate (3) at (1,2);
				\draw (1) node[below left] {$0$}-- (2) node[below right] {$1$} -- (3)  node[above] {$2$}-- (1);
				\foreach \i in {1,...,3} {\fill (\i) circle (1.8pt);}; 
				\end{tikzpicture}  
				\subcaption{$\Delta(\H)$, the smallest $\Delta$-set that contains $\H$ .}
				\label{fig:exam_3_Nbhd(g)}
			\end{minipage}
			\caption{}
		\end{figure}
\end{example}
This example shows that $H^{\emb}_*(\H;G)$ may not be the homology of any simplicial complex as $H^{\emb}_0(\H;\Z)=0$, which is not the case for any nonempty simplicial complex. Let us consider another example.

\begin{example}
Let $\H=(V_\H,\mathcal{E}_\H)$ with $V_\H=\{0,1,2\}$ ordered by $0<1<2$, and $$\mathcal{E}_\H=\{\{0,1,2\},\{0,1\},\{0,2\},\{0\},\{1\},\{2\}\}$$ see Figure~\ref{fig:hypergraph_homology_H}.  Then $\Delta\H$ is the abstract simplicial complex of a $2$-simplex with vertices labeled by $0,1,2$. The $1$-face $\{1,2\}$ is not in $\H$.
 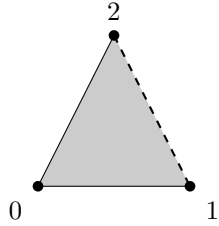
\begin{figure}[h]
			\centering
    			\begin{tikzpicture} [scale=1, inner sep=2mm]
    			\coordinate (1) at (0,0);
    			\coordinate (2) at (1,2);
    			\coordinate (3) at (2,0);
    
    			\draw (2)node[above]{$2$} -- (1) node[below left]{$0$} --(3)node[below right]{$1$};
    			
			\foreach \i in {1, 2, 3} {\fill (\i) circle (2pt);} 
    			\draw[thick, dashed](2) -- (3);
    			\fill[opacity = 0.2](1)--(2)--(3)--(1);
    			\end{tikzpicture} 
    			\caption{The hypergraph $\H$ is a standard 2 simplex where the dotted edge is missing.}
    				\label{fig:hypergraph_homology_H}
		\end{figure}
Let $G=\Z$. Then the chain complex $C_*(\Delta\H)$ is given by $C_0(\Delta\H)=\Z^{\oplus 3}=\Z\{\{0\},\{1\},\{2\}\}$, $C_1(\Delta\H)=\Z^{\oplus 3}=\Z\{\{0,1\},\{0,2\},\{1,2\}\}$, and $C_2(\Delta\H)=\Z=\Z\{\{0,1,2\}\}$.

We have $\inf_0=C_0(\Delta\H)=\Z\{\{0\},\{1\},\{2\}\}$,
$$
\inf_1=\Z(\mathcal{E}_1)\cap \partial_1^{-1}(\Z(\mathcal{E}_0))=\Z(\mathcal{E}_1)\cap C_1(\Delta\H)=\Z(\mathcal{E}_1)=\Z\{\{0,1\},\{0,2\}\}
$$
$$
\inf_2=\Z(\mathcal{E}_2)\cap \partial_2^{-1}(\Z(\mathcal{E}_1))=0
$$
with $\partial_1(\inf_1)=\Z\{\{1\}-\{0\},\{2\}-\{0\}\}$. Thus $H^{\emb}_0(\H)=\Z$ and $H^{\emb}_i(\H)=0$ for $i\geq1$.

Let $\H'=(V_{\H'},\mathcal{E}_{\H'})$ with $V_{\H'}=V_\H=\{0,1,2\}$ ordered by $0<1<2$, and $$\mathcal{E}_{\H'}=\{\{0,1,2\},\{0,1\},\{0\},\{1\},\{2\}\}.$$ Then $H^{\emb}_0(\H')=\Z\oplus \Z$ and $H^{\emb}_i(\H)=0$ for $i\geq1$. Thus the embedded homology of $\H'$ can not be realized as the homology of a path-connected topological space.

\end{example}

These examples indicate that embedded homology is a new homology theory with unusual properties and that poses its own questions and challenges.

The definition of embedded homology of a hypergraph $\H$ depends on the orientation of its simplicial closure $\Delta\H$. It is well-known that simplicial homology is independent on the choice of orientation. The following theorem shows that this is also true for the embedded homology of hypergraphs.

\begin{theorem}\label{thm:hyp-grph_orienation}
The embedded homology $H^{\emb}_*(\H;G)$ of a hypergraph $\H$ does not depend on a choice of orientation on $\Delta\H$.
\end{theorem}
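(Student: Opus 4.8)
The plan is to reduce the statement to the classical fact that simplicial homology is orientation-independent, by observing that the change-of-orientation isomorphism of simplicial chain complexes is \emph{diagonal} in the basis of simplices and therefore carries the distinguished subgroup $\Z(\H)\otimes G$ onto itself. Fix two orientations $\omega$ and $\omega'$ on $\Delta\H$ and write $C_*^{\omega}$ and $C_*^{\omega'}$ for the corresponding chain complexes $C_*(\Delta\H;G)$. For each simplex $\sigma\in\Delta\H$ let $\varepsilon(\sigma)\in\{+1,-1\}$ record whether $\omega$ and $\omega'$ assign $\sigma$ the same orientation, and define $\theta\colon C_*^{\omega}\lra C_*^{\omega'}$ on basis elements by $\theta([\sigma]_\omega)=\varepsilon(\sigma)\,[\sigma]_{\omega'}$, extended $G$-linearly. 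This is precisely the standard isomorphism used to prove orientation-invariance of simplicial homology, so it is an isomorphism of the ambient chain complexes commuting with the boundary operators.

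First I would record the compatibility of $\theta$ with the subgroup. Each hyperedge $B\in\mathcal{E}_\H$ of cardinality $k+1$ corresponds to a single $k$-simplex of $\Delta\H$, and by definition $\Z(\H)$ is the graded subgroup generated by exactly these simplices, so $\Z(\H)=\bigoplus_{B\in\mathcal{E}_\H}\Z\,[B]$ is a free direct summand of the simplicial chain group spanned by the hyperedge part of the simplex basis. Since $\theta$ sends each generator $[B]_\omega$ to $\pm[B]_{\omega'}$, it maps this summand bijectively onto its counterpart, whence
$$
\theta\bigl(\Z(\H)\otimes G\bigr)=\Z(\H)\otimes G
$$
as graded subgroups of $C_*^{\omega'}$. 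Thus $\theta$ is a chain isomorphism of the ambient complexes carrying the distinguished subgroup onto the distinguished subgroup.

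It then remains to transport the $\inf$ and $\sup$ constructions along $\theta$. Because $\theta$ is a chain isomorphism, $E_*\mapsto\theta(E_*)$ is an inclusion-preserving bijection between the subcomplexes of $C_*^{\omega}$ and those of $C_*^{\omega'}$; combined with $\theta(\Z(\H)\otimes G)=\Z(\H)\otimes G$, it matches the subcomplexes contained in (respectively containing) the subgroup on the two sides. The extremal characterisations of Proposition~\ref{proposition9.2} therefore give
$$
\theta\bigl(\inf_*(\Z(\H)\otimes G)\bigr)=\inf_*(\Z(\H)\otimes G),\qquad \theta\bigl(\sup_*(\Z(\H)\otimes G)\bigr)=\sup_*(\Z(\H)\otimes G),
$$
the two sides being computed in $C_*^{\omega}$ and $C_*^{\omega'}$. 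Restricting $\theta$ then yields chain isomorphisms $\inf_*^{\omega}\cong\inf_*^{\omega'}$ and $\sup_*^{\omega}\cong\sup_*^{\omega'}$, and passing to homology identifies $H^{\emb}_*(\H;G)$ computed with $\omega$ with the one computed with $\omega'$.

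The feature distinguishing this from the ordinary simplicial statement is the presence of the subgroup $\Z(\H)\otimes G$, so the one point to get right is that the orientation-change map respects it; this is the step I expect to be the crux. It is, however, made easy precisely by the diagonality of $\theta$: a map that can only rescale each hyperedge generator by a unit cannot move it out of $\Z(\H)\otimes G$. The remaining ingredients — that $\theta$ is a chain isomorphism (a sign computation showing $\partial\theta=\theta\partial$) and that $\inf$ and $\sup$ are natural under chain isomorphisms — are classical and formal respectively, so no essential difficulty remains once the subgroup-preservation is in place.
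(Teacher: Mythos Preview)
Your argument is correct and shares the paper's core idea: the change-of-orientation map on $C_*(\Delta\H;G)$ is diagonal in the simplex basis (each simplex goes to $\pm$ itself), hence preserves the graded subgroup $\Z(\H)\otimes G$, and therefore transports the $\inf$ and $\sup$ subcomplexes. The execution differs. The paper reduces to adjacent transpositions $(i,i+1)$ of a fixed linear order on $V_\H$, writes down the explicit decomposition $C_n=C_n^{v_iv_{i+1}}\oplus C_n^{\widehat{v_iv_{i+1}}}$, and verifies by direct computation that the associated sign involution $\phi_n$ intertwines $\partial$ and $\partial'$ and restricts to $\Z(\H)\otimes G$. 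You instead treat two arbitrary orientations at once, invoke the classical orientation-independence isomorphism $\theta$ as a known chain isomorphism, and use the extremal characterisations in Proposition~\ref{proposition9.2} to transport $\inf$ and $\sup$ functorially. Your route is shorter and more conceptual; the paper's route is self-contained and makes the chain-map identity visible without appeal to the classical result.
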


\begin{proof}
Let $H_*^{\emb}(\H)$ and $G(\H)$ denote $H_*^{\emb}(\H,G)$ and $\Z(\H)\otimes G$, respectively. We assume that $V_\H$ is a finite set $\{v_1,v_2,\ldots,v_m\}$. Take a linear ordering on $V_\H$ so that $v_1<v_2<\cdots<v_n$ as a fixed choice of total order and let $C_*=C_*(\Delta\H;G)$ denote the oriented chain complex. It suffices to show that the homology stays the same up to isomorphism under the transpositions $(i,i+1)$ of the ordering on $V(\H)$ for $1\leq i\leq m-1$.

Let $\partial'_n\colon C_n\to C_{n-1}$, $n\geq 1$ be the boundary homomorphism defined using the new order on $V_{\H}$, that is,  $v_1<v_2<\cdots<v_{i-1}<v_{i+1}<v_i<v_{i+2}<\cdots <v_n$. For $n\geq1$, the abelian group $C_n$ admits a direct sum decomposition
\begin{equation}\label{equation3.1}
C_n=C_n^{v_iv_{i+1}}\oplus C_n^{\widehat{v_iv_{i+1}}}
\end{equation}
where $C_n^{v_iv_{i+1}}$ is the subgroup of $C_n$ given by linear combinations with coefficients in group $G$ of the $n$-simplicies $\sigma\in \Delta\H$ whose vertex set contains both $v_i$ and $v_{i+1}$, and $C_n^{\widehat{v_iv_{i+1}}}$ is the subgroup of $C_n$ given by linear combinations with coefficients in group $G$ of the remaining $n$-simplicies in $\Delta\H$. For any chain $\alpha\in C_n$, there is a corresponding unique decomposition
\begin{equation}\label{equation3.2}
\alpha=\alpha^{v_iv_{i+1}}+\alpha^{\widehat{v_iv_{i+1}}}.
\end{equation}

Since $v_i$ and $v_{i+1}$ are neighbored vertices in the order, we have
$$
\partial'(\sigma)=\partial(\sigma)\textrm{ if } \sigma \textrm{ does not contain both } v_i \textrm{ and } v_{i+1} \textrm{ in its vertex set}.
$$
Therefore
\begin{equation}\label{equation3.3}
\partial'_n|=\partial_n|\colon C_n^{\widehat{v_iv_{i+1}}}\longrightarrow C_{n-1}.
\end{equation}
Let $\sigma=[a_1\cdots a_tv_iv_{i+1}b_1\cdots b_s]$ be an oriented simplex in $\Delta\H$ with $a_1<\cdots<a_t<v_i<v_{i+1}<b_1<\cdots<b_s$.
By the definition of $\partial(\sigma)$, we have
$$
\begin{array}{rcl}
\partial (\sigma)^{v_iv_{i+1}}&=&\sum\limits_{j=1}^t(-1)^{j-1}[a_1\cdots \hat a_j\cdots a_tv_iv_{j+1}b_1\cdots b_s]+\\
&&\sum\limits_{k=1}^s(-1)^{t+k+1}[a_1\cdots a_tv_iv_{i+1}b_1\cdots\hat b_k\cdots b_s]\\
\end{array}
$$
where $\cdots \hat x\cdots$ means that $x$ is deleted, and
$$
\partial (\sigma)^{\widehat{v_iv_{i+1}}}=(-1)^t[a_1\cdots a_tv_{i+1}b_1\cdots b_s]+(-1)^{t+1}[a_1\cdots a_tv_ib_1\cdots b_s].
$$
By switching the order of $v_i$ and $v_{i+1}$, we have
$$
\partial'(\sigma)=(\partial (\sigma))^{v_iv_{i+1}}-(\partial (\sigma))^{\widehat{v_iv_{i+1}}}.
$$
Extending this formula linearly with coefficients in group $G$, we obtain the formula
\begin{equation}\label{equation3.4}
\partial'(\alpha)=(\partial(\alpha))^{v_iv_{i+1}}-(\partial(\alpha))^{\widehat{v_iv_{i+1}}} \textrm{ for } \alpha\in C_*^{v_iv_{i+1}}.
\end{equation}

Define the group homomorphism
$$\phi_n\colon C_n=C_n^{v_iv_{i+1}}\oplus C_n^{\widehat{v_iv_{i+1}}}\longrightarrow C_n=C_n^{v_iv_{i+1}}\oplus C_n^{\widehat{v_iv_{i+1}}}$$
by setting
$$
\phi_n(z^{v_iv_{i+1}}+z^{\widehat{v_iv_{i+1}}})=z^{v_iv_{i+1}}-z^{\widehat{v_iv_{i+1}}}.
$$
Clearly, $\phi_n$ is an isomorphism. Let $z=z^{v_iv_{i+1}}+z^{\widehat{v_iv_{i+1}}}\in C_n$ be a chain. Then
$$
\begin{array}{rcl}
\partial(z)&=&\partial(z^{v_iv_{i+1}})+\partial(z^{\widehat{v_iv_{i+1}}})\\
&=&(\partial(z^{v_iv_{i+1}}))^{v_iv_{i+1}}+(\partial(z^{v_iv_{i+1}}))^{\widehat{v_iv_{i+1}}}+\partial(z^{\widehat{v_iv_{i+1}}})\\
\end{array}
$$
so
$$
(\partial(z))^{v_iv_{i+1}}=(\partial(z^{v_iv_{i+1}}))^{v_iv_{i+1}}
$$
and
$$
(\partial(z))^{\widehat{v_iv_{i+1}}}=(\partial(z^{v_iv_{i+1}}))^{\widehat{v_iv_{i+1}}}+\partial(z^{\widehat{v_iv_{i+1}}}).
$$
On the other hand, by direct computation
$$
\begin{array}{rcl}
\partial'(\phi_n(z))&=&\partial'(z^{v_iv_{i+1}}-z^{\widehat{v_iv_{i+1}}})\\
&=&\partial(z^{v_iv_{i+1}}))^{v_iv_{i+1}} -((\partial(z^{v_iv_{i+1}}))^{\widehat{v_iv_{i+1}}}+\partial(z^{\widehat{v_iv_{i+1}}}))\\
&=&(\partial(z))^{v_iv_{i+1}}-(\partial(z))^{\widehat{v_iv_{i+1}}}.\\
\end{array}
$$
This gives a commutative diagram
\[
\xymatrix{
C_n \ar[r]^{\phi_n}_{\cong} \ar[d]^{\partial_n}  & C_n\ar[d]^{\partial'_n} \\
C_{n-1}\ar[r]^{\phi_{n-1}}_{\cong}& C_{n-1}.\\}
\]
Note that the decomposition~(\ref{equation3.1}) restricted to $G(\H_n)$ gives the decomposition
$$G(\H)=G(\H)^{v_iv_{i+1}}\oplus G(\H)^{\widehat{v_iv_{i+1}}}$$ with the same rule on simplices. The subgroup $G(H_n)$ is invariant under $\phi_n$. Moreover, there is a commutative diagram
\[
\xymatrix{
\inf^{C_*(\Delta\H;G)}_n(G(\H)) \ar[r]^{\phi_n}_{\cong} \ar[d]^{\partial_n|}  & \inf^{C_*(\Delta\H;G)}_n(G(\H))\ar[d]^{\partial'_n|} \\
\inf^{C_*(\Delta\H;G)}_{n-1}(G(\H))\ar[r]^{\phi_{n-1}}_{\cong}& \inf^{C_*(\Delta\H;G)}_{n-1}(G(\H)).\\}
\]
The assertion then follows by taking homology of this commutative diagram.

\end{proof}

The embedded homology of hypergraphs was introduced in 2019 in~\cite{BLRW}. Previously, cohomological aspects on $k$-uniform hypergraphs have been studied, see ~\cite{C77, C78, CW86, Chung, MS75, ML83, S76, ST81, We84, Z81}, using cohomology introduced in a combinatorial way. Also Emtander~\cite{Emtander} studied the homology of the \textit{independence complex} $\Delta^c\H$ of a hypergraph $\H=(V_{\H},\mathcal{E}_\H)$ in 2009, where  $\Delta^c\H=\{F\subseteq V_\H \ | \ E\not\subseteq F \textrm{ for any } E\in \mathcal{E}_\H\}$. The approach of embedded homology is different from the classical research on topological structures related to hypergraphs as it is directly define on the hypergraph.

Although the embedded homology of hypergraphs is a new topic with surprising properties, it inherits many characteristics of simplicial homology. The following proposition is an example of this, the proof of this proposition is similar to that in of~\cite[Theorem 8.2, p.45]{Munkres}.

Recall that the \textit{cone} $CK$ of a simplicial complex $K$ is defined as a join $CK=w\ast K$ with $w$ a vertex not in $K$. Analogously, we can define the join of hypergraphs and the cone $C\H=w\ast\H$.

\begin{theorem}\label{hypergraph cone theorem}
Let $\H$ be a hypergraph and let $G$ be an abelian group. Then
$$
H_n^{\emb}(C\H;G)=\left\{
\begin{array}{lcl}
0&\textrm{ if }& n>0\\
G&\textrm{ if }&n=0.\\
\end{array}\right.
$$
\hfill $\Box$
\end{theorem}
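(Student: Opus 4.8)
\emph{Proof proposal.} The plan is to emulate the classical cone argument (as in~\cite[Theorem 8.2]{Munkres}) but to carry it out on the subcomplex that computes embedded homology rather than on the full simplicial chain complex of $\Delta(C\H)$. First I would pin down the combinatorics. Writing $C\H=w\ast\H$ for the cone, its hyperedge set is $\mathcal{E}_{C\H}=\mathcal{E}_\H\cup\{\{w\}\}\cup\{\{w\}\cup E \mid E\in\mathcal{E}_\H\}$, and one checks directly that $\Delta(C\H)=w\ast\Delta\H$ is the simplicial cone on $\Delta\H$. Since Theorem~\ref{thm:hyp-grph_orienation} guarantees that $H^{\emb}_*$ is independent of the chosen orientation, I would order $V(C\H)$ so that $w$ is the smallest vertex. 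Set $M_*=\Z(C\H)\otimes G$, regarded as a graded subgroup of $C_*=C_*(\Delta(C\H);G)$, and recall from Proposition~\ref{thm:inf->sup_inclus} that $H^{\emb}_*(C\H;G)=H_*(\sup_*(M_*))$, where by Proposition~\ref{proposition9.2}(1) one has $\sup_n(M_*)=M_n+\partial_{n+1}(M_{n+1})$.

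Next I would introduce the standard cone operator $s_n\colon C_n\to C_{n+1}$, defined on generators by $s(\sigma)=[w,\sigma]$ when $w\notin\sigma$ and $s(\sigma)=0$ when $w\in\sigma$. Because $w$ is smallest, a direct face computation gives the chain-homotopy identities $\partial_{n+1}s_n+s_{n-1}\partial_n=\id$ for $n\ge 1$ and $\partial_1 s_0=\id - r$ in degree $0$, where $r=w\cdot\epsilon$ and $\epsilon$ is the augmentation. The heart of the argument --- and the step I expect to be the main obstacle --- is the claim that $s$ restricts to an operator $\sup_*(M_*)\to\sup_{*+1}(M_*)$; this is exactly where embedded homology departs from ordinary simplicial homology, since $s$ must be shown to respect the subcomplex $\sup_*(M_*)$ and not merely the ambient $C_*$. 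The key observation that unlocks this is that \emph{the cone of a hyperedge is again a hyperedge of $C\H$}: if $\sigma$ is a generator of $M_n$, then $s(\sigma)$ is either $0$ or the generator $\{w\}\cup\sigma\in M_{n+1}$, so $s(M_n)\subseteq M_{n+1}$. Combining this with the identity $s_n(\partial_{n+1}b)=b-\partial_{n+2}s_{n+1}(b)$ for $b\in M_{n+1}$ shows that $s(\partial_{n+1}M_{n+1})\subseteq M_{n+1}+\partial_{n+2}(M_{n+2})\subseteq\sup_{n+1}(M_*)$, and hence $s(\sup_n(M_*))\subseteq\sup_{n+1}(M_*)$.

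Finally I would read off the homology on $\sup_*(M_*)$. For $n\ge 1$ the restricted identity $\partial s+s\partial=\id$ shows that every cycle $z\in\sup_n(M_*)$ satisfies $z=\partial s(z)$ with $s(z)\in\sup_{n+1}(M_*)$, so $H^{\emb}_n(C\H;G)=0$. In degree $0$ I would use the augmentation: since $\{w\}\in M_0$, the map $\epsilon$ restricts to a surjection $\sup_0(M_*)\to G$, and $\partial_1(\sup_1(M_*))\subseteq\ker\epsilon$ because $\epsilon\partial_1=0$. Conversely, for $x\in\sup_0(M_*)$ with $\epsilon(x)=0$ the identity $\partial_1 s_0=\id-r$ together with $r(x)=w\cdot\epsilon(x)=0$ and $s_0(\sup_0(M_*))\subseteq\sup_1(M_*)$ gives $x=\partial_1 s_0(x)\in\partial_1(\sup_1(M_*))$. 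Thus $\ker\epsilon|_{\sup_0(M_*)}=\partial_1(\sup_1(M_*))$, whence $H^{\emb}_0(C\H;G)=\sup_0(M_*)/\partial_1(\sup_1(M_*))\cong G$, completing the proof.
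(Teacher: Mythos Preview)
Your proposal is correct and follows exactly the approach the paper indicates: the paper gives no detailed argument but simply refers to the classical cone-operator proof in~\cite[Theorem~8.2, p.~45]{Munkres}, and you have carried out precisely that argument, correctly identifying and handling the one genuinely new step---that the cone operator $s$ preserves $\sup_*(M_*)$ because coning a hyperedge of $C\H$ yields either $0$ or another hyperedge of $C\H$.
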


\subsection{Super-Hypergraphs}\label{super-hypergraph}
\subsubsection{Homology of Super-Hypergraphs}
Recall from Definition~\ref{def:sup_hyp_graph} that a super-hypergraph is a pair $(\H, X)$, where $X$ is a $\Delta$-set and $\H$ is a graded subset of $X$. Here $X$ is a parental $\Delta$-set of $\H$ so that the hypergraph $\H$ is born from $X$. Using Proposition~\ref{thm:inf->sup_inclus}, there is an embedded homology on super-hypergraphs.

\begin{definition}\label{super-hypergraph homology}
Let $(\H,X)$ be a super-hypergrapp and let $G$ be an abelian group. The \textit{embedded homology $H_*^{\emb,X}(\H;G)$ with coefficients in $G$} of $(\H,X)$ is defined by
$$
H_*^{\emb,X}(\H;G)=H_*(\inf^{C_*(X;G)}_*(\Z(\H)\otimes G))\cong H_*(\sup^{C_*(X;G)}_*(\Z(\H)\otimes G))
$$
\end{definition}
where $\Z(\H)\otimes G$ is a graded subgroup of the chain complex of abelian groups $C_*(X; G)$.

We want to show that this definition is an extension of the embedded homology of hypergraphs. An \textit{oriented hypergraph} is a hypergraph $\H$ with a partial order on its vertex set so that the restriction to the vertices of each hyperedge of $\H$ is linear. If the vertices of a simplex are totally ordered, then the restricted order on the vertices of any of its faces is linear. Thus the simplicial closure $\Delta\H$ can be oriented with its orientation induced by the order on $\H$. From Definition~\ref{def:emb_homology},
$$
H_*^{\emb}(\H;G)=H_*^{\emb,\Delta\H}(\H;G)
$$
by considering the oriented simplicial complex $\Delta\H$ as a $\Delta$-set. From Theorem~\ref{thm:hyp-grph_orienation}, this definition is independent on the choice of orientation.

We now consider how morphisms of super-hypergraphs, recall Definition~\ref{def:sup_hyp_graph_morphisms}, induce maps on the infimum and supremum chain complexes as well as embedded homology of super-hypergraphs.

\begin{proposition}\label{proposition3.12}
Let $\phi\colon (\H, X)\lra (\H', Y)$ be a morphism of super-hypergraphs. Then there is a commutative diagram
\begin{equation}\label{CD:morphism_SHG1}
\xymatrix{
\inf^{C_*(X;G)}_*(\Z(\H)\otimes G) \ar@{}[r]|-*[@]{\subseteq} \ar[d]^{\phi_{\#}|}  & \Z(\H)\otimes G\ar@{}[r]|-*[@]{\subseteq}\ar[d]^{\phi_{\#}|} & \sup^{C_*(X;G)}_*((\Z(\H)\otimes G)\ar@{^{(}->}[r]\ar[d]^{\phi_{\#}|}& C_*(X;G)\ar[d]^{\phi_{\#}}\\
\inf^{C_*(Y;G)}_*(\Z(\H')\otimes G)\ar@{}[r]|-*[@]{\subseteq} & \Z(\H')\otimes G\ar@{}[r]|-*[@]{\subseteq} &\sup^{C_*(Y;G)}_*((\Z(\H')\otimes G)\ar@{^{(}->}[r]& C_*(Y;G)}
\end{equation}
which induces a map $\phi_*\colon  H_*^{\emb,X}(\H;G)\lra  H_*^{\emb,Y}(\H';G)$. Moreover, if $\phi(\H)=\H'$, then there is a short exact sequence of chain complexes
\begin{equation}\label{CD:morphism_SHG2}
\xymatrix{
\sup^{C_*(X;G)}_*((\Z(\H)\otimes G)\cap \Ker (\phi_{\#})\ar@{^{(}->}[r] &\sup^{C_*(X;G)}_*((\Z(\H)\otimes G)\ar@{->>}[r]^{\phi_{\#}} &\sup^{C_*(Y;G)}_*((\Z(\H')\otimes G)}.
\end{equation}
\end{proposition}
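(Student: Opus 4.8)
The plan is to build everything from the single chain map induced by the underlying $\Delta$-map. First I would recall that a $\Delta$-map $\phi\colon X\lra Y$ induces a chain map $\phi_{\#}\colon C_*(X;G)\lra C_*(Y;G)$, sending a generator $x\in X_n$ to $\phi(x)\in Y_n$ and commuting with the boundary operators because $\phi$ commutes with the face operations. Since $\phi(\H)\subseteq \H'$, on generators $\phi_{\#}$ carries $\Z(\H)\otimes G$ into $\Z(\H')\otimes G$, which supplies the two middle vertical maps $\phi_{\#}|$ of diagram~(\ref{CD:morphism_SHG1}).

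Next I would check that $\phi_{\#}$ restricts to both the infimum and the supremum columns, using the explicit descriptions in Proposition~\ref{proposition9.2}. For the supremum, write $\sup^{C_*(X;G)}_n(\Z(\H)\otimes G)=(\Z(\H)\otimes G)_n+\partial_{n+1}((\Z(\H)\otimes G)_{n+1})$; applying $\phi_{\#}$ and passing it through $\partial_{n+1}$ (legitimate as $\phi_{\#}$ is a chain map) lands in $(\Z(\H')\otimes G)_n+\partial_{n+1}((\Z(\H')\otimes G)_{n+1})=\sup^{C_*(Y;G)}_n(\Z(\H')\otimes G)$. For the infimum, take $x\in\inf^{C_*(X;G)}_n(\Z(\H)\otimes G)=(\Z(\H)\otimes G)_n\cap\partial_n^{-1}((\Z(\H)\otimes G)_{n-1})$; then $\phi_{\#}(x)\in(\Z(\H')\otimes G)_n$ and $\partial_n(\phi_{\#}(x))=\phi_{\#}(\partial_n(x))\in(\Z(\H')\otimes G)_{n-1}$, so $\phi_{\#}(x)\in\inf^{C_*(Y;G)}_n(\Z(\H')\otimes G)$. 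All four vertical maps are thus restrictions of the one map $\phi_{\#}$ and all horizontal maps are inclusions, so diagram~(\ref{CD:morphism_SHG1}) commutes automatically. Taking homology of the infimum column produces $\phi_*\colon H_*^{\emb,X}(\H;G)\lra H_*^{\emb,Y}(\H';G)$; by Proposition~\ref{thm:inf->sup_inclus} the inclusions $\inf\hookrightarrow\sup$ are homology isomorphisms, so the same map is obtained from the supremum column, and $\phi_*$ is well defined.

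For the short exact sequence~(\ref{CD:morphism_SHG2}) under the hypothesis $\phi(\H)=\H'$, I would verify the three conditions for the restricted chain map $\phi_{\#}\colon \sup^{C_*(X;G)}_*(\Z(\H)\otimes G)\lra\sup^{C_*(Y;G)}_*(\Z(\H')\otimes G)$. Injectivity of the left-hand inclusion and exactness at the middle term hold by the very definition of $\sup^{C_*(X;G)}_*(\Z(\H)\otimes G)\cap\Ker(\phi_{\#})$ as a kernel. The essential point is surjectivity: since $\phi(\H)=\H'$ degreewise, $\phi$ maps $\H_n$ onto $\H'_n$, hence $\phi_{\#}((\Z(\H)\otimes G)_n)=(\Z(\H')\otimes G)_n$, and combining this with the additive formula for the supremum and the chain-map property exactly as above yields $\phi_{\#}(\sup^{C_*(X;G)}_n(\Z(\H)\otimes G))=\sup^{C_*(Y;G)}_n(\Z(\H')\otimes G)$.

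I expect this surjectivity to be the one genuinely load-bearing step, since it is where the strengthened hypothesis $\phi(\H)=\H'$ (rather than merely $\phi(\H)\subseteq\H'$) is indispensable: without degreewise surjectivity onto $\H'$, the image of $\phi_{\#}$ need not exhaust the generators $(\Z(\H')\otimes G)_n$, hence not the supremum complex. The verification that $\phi_{\#}$ preserves the infimum is the other point to treat with care, as it is slightly less transparent than for the supremum; but both reduce to the commutation $\partial\phi_{\#}=\phi_{\#}\partial$ together with the explicit formulas of Proposition~\ref{proposition9.2}.
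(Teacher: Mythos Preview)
Your proof is correct and covers all the necessary points. The only difference from the paper's argument is tactical rather than substantive: where you invoke the explicit formulas of Proposition~\ref{proposition9.2} (the additive description $\sup_n=D_n+\partial_{n+1}(D_{n+1})$ and the intersection description $\inf_n=D_n\cap\partial_n^{-1}(D_{n-1})$) and compute directly, the paper instead uses the universal characterizations from the same proposition. For instance, to show $\phi_{\#}$ carries $\sup^{C_*(X;G)}_*(\Z(\H)\otimes G)$ into $\sup^{C_*(Y;G)}_*(\Z(\H')\otimes G)$, the paper observes that the preimage $\phi_{\#}^{-1}\bigl(\sup^{C_*(Y;G)}_*(\Z(\H')\otimes G)\bigr)$ is a subcomplex of $C_*(X;G)$ containing $\Z(\H)\otimes G$, and then appeals to minimality of the supremum. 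Likewise for surjectivity under $\phi(\H)=\H'$: the paper notes that the image $\phi_{\#}\bigl(\sup^{C_*(X;G)}_*(\Z(\H)\otimes G)\bigr)$ is a subcomplex of $C_*(Y;G)$ containing $\Z(\H')\otimes G$, hence contains the supremum on the $Y$ side. Your explicit computations and the paper's universal-property arguments are two sides of the same coin, and either is a perfectly clean way to establish the result.
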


\begin{proof}
Since $\sup^{C_*(Y;G)}_*((\Z(\H')\otimes G)$ is a subcomplex of $C_*(Y;G)$ containing $\Z(\H')\otimes G$, its preimage
\[
\phi^{-1}_{\#}(\sup^{C_*(Y;G)}_*((\Z(\H')\otimes G))
\]
is a subcomplex of $C_*(X;G)$ containing $\Z(\H)\otimes G$ because $\phi(\H)\subseteq \H'$. Thus
\begin{equation}\label{proof-proposition3.12}
\sup^{C_*(X;G)}_*((\Z(\H)\otimes G)\subseteq \phi^{-1}_{\#}(\sup^{C_*(Y;G)}_*((\Z(\H')\otimes G))
\end{equation}
as a subcomplex. Similarly
\begin{equation}\label{proof-proposition3.12-2}
\phi_{\#}(\inf^{C_*(X;G)}_*(\Z(\H)\otimes G))\subseteq \inf^{C_*(Y;G)}_*(\Z(\H')\otimes G)
\end{equation}
as a subcomplex.
Therefore, there is a commutative diagram \ref{CD:morphism_SHG1}.

Now, we assume that $\phi(\H)=\H'$. Then
$$
\phi_{\#}(\sup^{C_*(X;G)}_*((\Z(\H)\otimes G))\supseteq \phi_{\#}((\Z(\H)\otimes G))=\Z(\H')\otimes G
$$
is a subcomplex of $C_*(Y;G)$ containing $\Z(\H')\otimes G$. Hence
$$
\phi_{\#}(\sup^{C_*(X;G)}_*((\Z(\H)\otimes G))\supseteq
\sup^{C_*(Y;G)}_*((\Z(\H')\otimes G).
$$
Together with the containment ~\ref{proof-proposition3.12}, we have
$$
\sup^{C_*(Y;G)}_*((\Z(\H')\otimes G)= \phi_{\#}(\sup^{C_*(X;G)}_*((\Z(\H)\otimes G))
$$
and hence the short exact sequence \ref{CD:morphism_SHG2}.
\end{proof}

\begin{corollary}\label{Corollary3.13}
Let $\phi\colon (\H, X)\lra (\H', Y)$ be a morphism of super-hypergraphs. Suppose that
\begin{enumerate}
\item[{\normalfont (1)}] $\phi\colon X\lra Y$ is an injective $\Delta$-map and
\item[{\normalfont (2)}] $\phi(\H)=\H'$.
\end{enumerate}
Then
$$
\phi_{\#}\colon \sup^{C_*(X;G)}_*((\Z(\H)\otimes G)\longrightarrow \sup^{C_*(Y;G)}_*((\Z(\H')\otimes G)
$$
is an isomorphism. In particular, $\phi_*\colon  H_*^{\emb,X}(\H;G)\lra  H_*^{\emb,Y}(\H';G)$ is an isomorphism.
\end{corollary}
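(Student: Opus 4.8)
The plan is to combine the two hypotheses: I would use hypothesis (2) to extract surjectivity of $\phi_{\#}$ from Proposition~\ref{proposition3.12}, and hypothesis (1) to force injectivity, so that the restriction of $\phi_{\#}$ to the supremum complexes is a bijective chain map.

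First I would invoke Proposition~\ref{proposition3.12}. Since $\phi(\H)=\H'$ by hypothesis (2), that proposition already supplies the short exact sequence of chain complexes
$$
\sup^{C_*(X;G)}_*(\Z(\H)\otimes G)\cap\Ker(\phi_{\#})\hookrightarrow \sup^{C_*(X;G)}_*(\Z(\H)\otimes G)\xrightarrow{\phi_{\#}}\sup^{C_*(Y;G)}_*(\Z(\H')\otimes G),
$$
whose right-hand map is surjective. It therefore remains only to show that the kernel term $\sup^{C_*(X;G)}_*(\Z(\H)\otimes G)\cap\Ker(\phi_{\#})$ is trivial.

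For the injectivity I would use hypothesis (1). Because $X$ and $Y$ are $\Delta$-sets, with no degeneracies to normalise away, the chain group $C_n(X;G)$ is the free $G$-module on the set $X_n$, and $\phi_{\#}$ carries the generator indexed by $x\in X_n$ to the generator indexed by $\phi(x)\in Y_n$. Since $\phi\colon X_n\to Y_n$ is injective, distinct generators are sent to distinct generators, so $\phi_{\#}\colon C_*(X;G)\to C_*(Y;G)$ is injective. Consequently $\Ker(\phi_{\#})=0$ on all of $C_*(X;G)$, and \emph{a fortiori} on the subcomplex $\sup^{C_*(X;G)}_*(\Z(\H)\otimes G)$. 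Hence the kernel term in the short exact sequence above vanishes, and $\phi_{\#}$ restricts to an isomorphism of supremum chain complexes.

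Finally I would pass to homology. By Definition~\ref{super-hypergraph homology} the embedded homology $H_*^{\emb,X}(\H;G)$ is the homology of $\sup^{C_*(X;G)}_*(\Z(\H)\otimes G)$, and likewise for $\H'$; an isomorphism of chain complexes induces an isomorphism on homology, giving that $\phi_*$ is an isomorphism. I do not expect a serious obstacle here: the only point that needs care is the verification that an injective $\Delta$-map induces an injective chain map, which rests on the freeness of $C_*(\,\cdot\,;G)$ on the simplex sets and the absence of degeneracies, and everything else is a formal consequence of Proposition~\ref{proposition3.12}.
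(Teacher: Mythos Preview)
Your proof is correct and follows essentially the same route as the paper: invoke the short exact sequence from Proposition~\ref{proposition3.12} via hypothesis~(2), then use hypothesis~(1) to conclude that $\sup^{C_*(X;G)}_*(\Z(\H)\otimes G)\cap\Ker(\phi_{\#})=0$. The paper records only this kernel vanishing and cites Proposition~\ref{proposition3.12}; your version spells out the details (freeness of chains on simplex sets, passage to homology) but the argument is the same.
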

\begin{proof}
By the assumption (1),
$$
\sup^{C_*(X;G)}_*((\Z(\H)\otimes G)\cap \Ker (\phi_{\#})=0
$$
and so the assertion follows by Proposition~\ref{proposition3.12}.
\end{proof}

\subsubsection{Variations of Parental $\Delta$-sets}
In recent topological applications to data analytics and machine learning, one of the most common approaches is to use discrete Hodge-Laplacian theory. Mathematically, combinatorial Laplacian operators defined on linear transformations on cochains of simplicial complexes have been studied, for example in ~\cite{DuvalandReiner, HorakandJost}. Therefore, in addition to simplicial homology, research on (co)chains of simplicial complexes such as spectral analysis on combinatorial Laplacian operators is also important for potential applications in data science. Similarly, the research on chains $\inf^{C_*(X;G)}_*(\Z(\H)\otimes G)$ and $\sup^{C_*(X;G)}_*(\Z(\H)\otimes G)$ could be useful for the applications of super-hypergraphs. Next, we describe some basic properties related to the chain complexes arising from super-hypergraphs.

A super-hypergraph $\H$ is assumed to have a parental $\Delta$-set $X$ that carries geometric structural information about $\H$. The embedded homology $H_*^{\emb,X}(\H;G)$ is defined using the geometric information inherited from $X$. On level of chains, there are inclusions of graded groups
$$
\xymatrix{
\inf^{C_*(X;G)}_*(\Z(\H)\otimes G)\ar@{^{(}->}[r]& \Z(\H)\otimes G\ar@{^{(}->}[r]&\sup^{C_*(X;G)}_*(\Z(\H)\otimes G)\ar@{^{(}->}[r]&C_*(X;G)\\}
$$
where the right most inclusion is a chain map. By Proposition~\ref{thm:inf->sup_inclus}, the inclusion
$$
\xymatrix{
\inf^{C_*(X;G)}_*(\Z(\H)\otimes G)\ar@{^{(}->}[r]&\sup^{C_*(X;G)}_*(\Z(\H)\otimes G)}
$$
is a chain homotopy equivalence that defines the embedded homology. The gap complex
\begin{equation}\label{equation-gap}
\sup^{C_*(X;G)}_*(\Z(\H)\otimes G)/\inf^{C_*(X;G)}_*(\Z(\H)\otimes G)
\end{equation}
which is an acyclic chain complex, gives more robust information about the graded set $\H$.

Let $\H$ be a fixed graded data set. Our aim is to vary the parental $\Delta$-set $X$ such that the corresponding infimum and supremum chain complexes reveal different aspects of the topological structure of $\H$. One natural way to vary the parental $\Delta$-set would be to consider morphisms $\phi\colon (\H, X)\to (\H,Y)$ that fix $\H$ and investigate how these affect the embedded homology. Another important question is whether one can vary the parental $\Delta$-set so that the gap complex~(\ref{equation-gap}) is as small as possible. We consider the following example.

\begin{example}\label{examp:diff_delta=>diff_hom}
Let $n$ be an odd positive integer. Let $X=\Delta^+[n]$ be the $\Delta$-set induced by an $n$-simplex with vertices labelled $0,1,\ldots,n$. Let $Y$ be the $\Delta$-set with $Y_k=\{a_k\}$, $0\leq k\leq n$, $Y_k=\emptyset$ for $k>n$ and $d_i(a_k)=a_{k-1}$ for $0\leq i\leq k\leq n$.  Let $\H$ be the graded set given by $\H_n=\{x_n\}$ and $\H_k=\emptyset$ for $k\not=n$. Consider the super-hypergraphs $(\H,X)$, $x_n=[0,1,2,\ldots,n]$, and $(\H,Y)$, $x_n=a_n$. There is a unique morphism $\phi\colon (\H,X)\to (\H,Y)$ such that $\phi|_{\H}=\mathrm{id}_{\H}$. Then, we have  $H_k^{\emb,X}(\H;\Z)=0$ for $k\geq0$ and
$$
 H_k^{\emb,Y}(\H;\Z)=\left\{
 \begin{array}{lcl}
 \Z&\textrm{ if } k=n,\\
 0&\textrm{ otherwise.}\\
 \end{array}\right.
 $$
This shows the embedded homology or super-hypergraphs depends on the parentl $\Delta$-set. The gap complex for $(\H,X)$ is the same as the acyclic complex
$$
\sup^{C_*(X;\Z)}_k(\Z(\H))=\left\{
\begin{array}{lcl}
\Z&\textrm{ if } k=n, n-1,\\
0&\textrm{ otherwise}\\
\end{array}\right.
$$
with $\partial \colon \sup^{C_*(X;\Z)}_n(\Z(\H))\to \sup^{C_*(X;\Z)}_{n-1}(\Z(\H))$ an isomorphism, and the gap complex for $(\H,Y)$ is $0$.
\end{example}

Now we consider the effect of morphisms on super-hypergraphs with a fixed hypergraph more generally. Let $\phi\colon (\H,X)\to (\H,Y)$ be a morphism of super-hypergraphs so that $\phi|_{\H}=\mathrm{id}_{\H}$. By ~\ref{proof-proposition3.12-2}, we have
$$
\xymatrix{
\inf^{C_*(X;G)}_*(\Z(\H)\otimes G)\ar@{^{(}->}[r]&\inf^{C_*(Y;G)}_*(\Z(\H)\otimes G)}
$$
namely the chain complex of $Y$ is closer to the graded group $\Z(\H)\otimes G$ than the chain complex of $X$. Together with the inclusion ~\ref{proof-proposition3.12}, it follows that the gap complex for $(\H,Y)$ is smaller, as shown in the above example. If $\phi$ is injective, then both
$$
\inf^{C_*(X;G)}_*(\Z(\H)\otimes G)=\inf^{C_*(Y;G)}_*(\Z(\H)\otimes G) \text{ and }
$$
$$
\sup^{C_*(X;G)}_*(\Z(\H)\otimes G)=\sup^{C_*(Y;G)}_*(\Z(\H)\otimes G)
$$
which means that we can replace $X$ by any of its $\Delta$-subsets  that contain $\H$ without changing the infimum and supremum chain complexes. In particular, we have

\begin{equation}\label{equ:emb_hom_delta_closure}
    H_*^{\emb,X}(\H;G)= H_*^{\emb,\Delta^X(\H)}(\H;G)
\end{equation}
where $\Delta^X(\H)$ is the $\Delta$-closure of $\H$ in $X$, which is the minimal $\Delta$-subset of $X$ containing $\H$ defined in Definition~\ref{def:sup_hyp_graph}.

\begin{definition}\label{regular super-hypergraph}
A super-hypergraph $(\H,X)$ is called \textit{regular} if $X=\Delta^X(\H)$.
\end{definition}

It is straightforward to see that a super-hypergraph $(\H,X)$ is regular if and only if all elements in $X$ are obtained from the elements in $\H$ together with their iterated faces in $X$. The following proposition may be useful for analysing for variations of parental $\Delta$-sets.

\begin{proposition}
Let $\H=\{\H_n\}_{n\geq0}$ be a graded set such that the cardinality of the set $\coprod\limits_{n\geq0} H_n$ is finite. Then there are finitely many regular super-hypergraphs $(\H,X)$ up to isomorphisms.
\end{proposition}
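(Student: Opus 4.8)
The plan is to produce a uniform finite bound, depending only on $\H$, for the size of the parental $\Delta$-set $X$ of any \emph{regular} super-hypergraph $(\H,X)$, and then to observe that there are only finitely many $\Delta$-set structures on graded sets of bounded size once we account for the freedom to relabel the elements lying outside $\H$.

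First I would bound the cardinality of $X$. By Definition~\ref{regular super-hypergraph} and the remark following it, regularity means that every element of $X$ is an iterated face $d_{i_1}\cdots d_{i_t}(x)$ of some $x\in\H$. Using the $\Delta$-identity~(\ref{equ:delta_set}) to put each such composite into a normal form, the codimension-$t$ iterated faces of a single $x\in\H_n$ are indexed by the $t$-element subsets of $\{0,1,\ldots,n\}$ (the deleted positions), so $x$ has at most $\sum_{t=0}^{n}\binom{n+1}{t}=2^{n+1}-1$ iterated faces, counting $x$ itself. Hence
$$
\Big|\coprod_{n\geq0}X_n\Big|=\big|\Delta^X(\H)\big|\leq\sum_{x\in\H}\big(2^{\dim x+1}-1\big)=:N,
$$
and $N$ is finite because $\coprod_{n\geq0}\H_n$ is finite.

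Next I would reduce the count up to isomorphism to a finite list. Fix once and for all a finite graded set $\hat Z=\H\sqcup P$, where $P$ is disjoint from $\H$ and has $N$ elements in each dimension $0\leq n\leq D$ with $D=\max_{x\in\H}\dim x$; then $\hat Z$ is finite. Given any regular $(\H,X)$, the bound $|X|\leq N$ guarantees an injective graded map $X\hookrightarrow\hat Z$ that is the identity on $\H$ and sends the remaining elements of $X$ injectively into $P$ (there is always room, dimension by dimension, and every element of $\Delta^X(\H)$ has dimension at most $D$). Transporting the face operations along this bijection onto its image exhibits $(\H,X)$ as isomorphic, rel $\H$, to a super-hypergraph whose underlying graded set is a subset $Z\subseteq\hat Z$ containing $\H$, equipped with some $\Delta$-set structure.

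Finally I would count. Since $\hat Z$ is finite it has only finitely many graded subsets $Z$ containing $\H$; and for each such $Z$ a $\Delta$-set structure is merely a choice of the finitely many face maps $d_i\colon Z_n\to Z_{n-1}$, each a function between finite sets, so there are only finitely many of them. Consequently there are finitely many isomorphism classes of regular super-hypergraphs $(\H,X)$. The only genuinely structural step, and the one requiring care, is the first: controlling the number of iterated faces of an element of $\H$ via the normal form for face composites, so that the size bound $N$ depends solely on $\H$ and not on the ambient $X$. The remaining steps are soft finiteness arguments, whose single subtlety is to make ``up to isomorphism'' precise by permitting relabeling of the elements of $X$ outside $\H$ — which is exactly what embedding into the fixed pool $\hat Z$ accomplishes.
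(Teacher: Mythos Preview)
Your proof is correct and follows essentially the same approach as the paper: both argue that regularity forces $X$ to consist of the elements of $\H$ together with their iterated faces, hence $X$ has bounded finite size depending only on $\H$, and then only finitely many $\Delta$-structures are possible. Your version is considerably more detailed than the paper's two-line sketch---you make the size bound explicit via the normal form for face composites and you spell out the ``up to isomorphism'' step by embedding into a fixed finite pool $\hat Z$---but the underlying strategy is the same.
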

\begin{proof}
Consider the collection of all possible $\Delta$-sets $X$ such that $(\H,X)$ is a regular super-hypergraph. If $(\H,X)$ is regular, then all elements in $X$ are given by the elements in the graded subset $\H$ together with their iterated faces in $X$. Therefore, as a $\Delta$-set $X$ is a finite extension of $\H$ together with finitely many face operations.
\end{proof}

\begin{definition}\label{complete super-hypergraph}
A super-hypergraph $(\H,X)$ is \textit{complete} if it is regular, and for any morphism of super-hypergraphs $\phi\colon (\H,X)\to (\H,Y)$ with $\phi|_{\H}=\mathrm{id}_{\H}$, the $\Delta$-map $\phi\colon X\to Y$ is injective.
\end{definition}

Therefore, for a complete super-hypergragh $(\H,X)$, the infimum complex $\inf^{C_*(X;G)}_*(\Z(\H)\otimes G)$ reaches a maximum and the gap complex~(\ref{equation-gap}) reaches a minimum.

By definition, a complete super-hypergraph is regular. However, the converse may not be true. For instance, the super-hypergraph $(\H,X)$ in Example~\ref{examp:diff_delta=>diff_hom} is regular but not complete as the morphism to $(\H,Y)$ is not an injective $\Delta$-set map. Therefore completeness provides a notion of maximality in the set of super-hypergraphs related to a given hypergraph.

\begin{theorem}[Completeness Criterion]
A regular super-hypergraph $(\H, X)$ with $\H\not=\emptyset$ is complete if and only if it has the following properties:
\begin{enumerate}
\item (Vertex Property) If $\H_0\not=\emptyset$, then $X_0=\H_0$. If $\H_0=\emptyset$, then $X_0$ is a one-point set.
\item (Matching Face Property) Let $z_1,z_2\in X_n$ with $n>0$. Suppose that
\begin{enumerate}
\item[(i)] $d_iz_1=d_iz_2$, $0\leq i\leq n$, and
\item[(ii)] $\{z_1,z_2\}\not\subseteq \H_n$.
\end{enumerate}
Then $z_1=z_2$.
\end{enumerate}
\end{theorem}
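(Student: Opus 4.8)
The plan is to prove the two implications separately, treating the backward direction (the two properties $\Rightarrow$ completeness) by a degreewise induction and the forward direction (completeness $\Rightarrow$ the two properties) by contraposition via a quotient construction.

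For the backward direction, suppose $(\H,X)$ is regular and satisfies the Vertex Property and the Matching Face Property, and let $\phi\colon(\H,X)\lra(\H,Y)$ be any morphism with $\phi|_{\H}=\id_{\H}$. I would show by induction on $n$ that $\phi|_{X_n}$ is injective. For $n=0$ the Vertex Property does all the work: if $\H_0\neq\emptyset$ then $X_0=\H_0$ and $\phi|_{X_0}=\phi|_{\H_0}=\id$ is injective, while if $\H_0=\emptyset$ then $X_0$ is a single point and injectivity is automatic. For the inductive step, take $z_1,z_2\in X_n$ with $\phi(z_1)=\phi(z_2)$. Since $\phi$ is a $\Delta$-map, $\phi(d_iz_1)=d_i\phi(z_1)=d_i\phi(z_2)=\phi(d_iz_2)$ for each $i$, so the inductive hypothesis (injectivity in degree $n-1$) gives $d_iz_1=d_iz_2$ for all $0\le i\le n$. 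Now split into cases: if $\{z_1,z_2\}\subseteq\H_n$, then $z_1=\phi(z_1)=\phi(z_2)=z_2$ because $\phi$ fixes $\H$; otherwise $\{z_1,z_2\}\not\subseteq\H_n$ and the Matching Face Property applies directly to give $z_1=z_2$. Either way $\phi|_{X_n}$ is injective, completing the induction and showing $(\H,X)$ is complete.

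For the forward direction I would argue contrapositively: if either property fails, I construct a morphism $\phi\colon(\H,X)\lra(\H,Y)$ fixing $\H$ that is not injective, contradicting completeness. In every case the target is a quotient $Y=X/\!\sim$ by the equivalence relation generated by a single identification, with $\phi$ the projection $\pi$. If the Matching Face Property fails, pick $z_1\neq z_2$ in some $X_n$ with $n>0$, $d_iz_1=d_iz_2$ for all $i$, and at least one of $z_1,z_2$ outside $\H_n$, and identify $z_1\sim z_2$. If the Vertex Property fails with $\H_0\neq\emptyset$, there is a vertex $v\in X_0\setminus\H_0$; fixing any $w\in\H_0$, identify $v\sim w$. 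If it fails with $\H_0=\emptyset$, then since $\H\neq\emptyset$ has an element of positive degree whose iterated faces lie in $X_0$ we have $X_0\neq\emptyset$, so the failure means $X_0$ contains two distinct points $v_1\neq v_2$, and we identify $v_1\sim v_2$. In each case $\pi$ is a non-injective $\Delta$-map, and since the single nontrivial identification never merges two distinct elements of $\H$, the restriction $\pi|_{\H}$ is injective; identifying $\H$ with $\pi(\H)\subseteq Y$ makes $(\H,Y)$ a super-hypergraph and $\pi\colon(\H,X)\lra(\H,Y)$ a morphism with $\pi|_{\H}=\id_{\H}$, the desired contradiction.

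The main obstacle is verifying that each of these single identifications really descends to a $\Delta$-set structure on $Y=X/\!\sim$, that is, that the generated equivalence relation is a $\Delta$-congruence ($a\sim b\Rightarrow d_ia\sim d_ib$) introducing no further, unintended identifications. For the vertex identifications this is immediate because vertices carry no faces. For the face identification it is exactly the hypothesis $d_iz_1=d_iz_2$ that makes $z_1\sim z_2$ consistent with the face operations: the relation forced on faces is $d_iz_1\sim d_iz_2$, which already holds as an equality, so the generated congruence is just $\{(z_1,z_2),(z_2,z_1)\}$ together with the diagonal and nothing else. Once $Y$ is known to be a $\Delta$-set, the $\Delta$-identity $d_id_j=d_jd_{i+1}$ descends from $X$ and $\pi$ is automatically a $\Delta$-map, so the remaining bookkeeping (that $\H$ embeds into $Y$ and $\pi$ fixes it) is routine.
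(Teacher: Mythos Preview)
Your proposal is correct and follows essentially the same approach as the paper: the forward direction proceeds by constructing a non-injective quotient $\Delta$-map when either property fails, and the backward direction proceeds degreewise using the Vertex Property as base case and the Matching Face Property in the inductive step. The only cosmetic differences are that the paper phrases the backward direction via a minimal-degree counterexample rather than explicit induction, and that you supply more detail than the paper on why the single identification generates a $\Delta$-congruence with no further collapses (and on why $X_0\neq\emptyset$ in the $\H_0=\emptyset$ case); both are welcome clarifications rather than changes of strategy.
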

\begin{proof}
Suppose that $(\H,X)$ is complete. We first show that properties (1) and (2) hold.

(1) Assume that $\H_0\not=\emptyset$. Suppose that there exists $z\in \Delta^X(\H)_0\smallsetminus \H_0$. Choose an element $x\in\H_0$. Let $Z$ be the $\Delta$-quotient of $X$ by identifying $z$ and $x$. Let $q\colon X\lra Z$ be the quotient map. Then $q|_{\H}$ is injective, but $q\colon X \lra Z$ is not injective, which is a contradiction. Hence $X_0=\H_0$.

If $\H_0=\emptyset$, then the same argument shows that $X_0$ must be a one-point set.

(2) Suppose $z_1\not=z_2$. Then, similarly to the proof of (1), we can construct the $\Delta$-quotient $Z$ obtained from $X$ by identifying $z_1$ with $z_2$ in dimension $n$. Since all faces of $z_1$ and $z_2$ match, the equivalence relation $z_1\sim z_2$ does not induce non-trivial identifications in $X_m$ for $m\not=n$. Since $\{z_1,z_2\}\not\subseteq \H_n$, the equivalence relation $z_1\sim z_2$ does not effect elements in $\H$. This proves property (2).

Now let $\phi\colon (\H,X)\to (\H,Y)$  be a morphism of super-hypergraphs with $\phi|_{\H}=\mathrm{id}_{\H}$ such that $(\H,X)$ is a regular super-hypergraph satisfying properties (1) and (2).

We show that the $\Delta$-map $\phi\colon X\to Y$ is injective. By the vertex property, $\phi\colon X_0\to Y_0$ is injective. Suppose that $\phi\colon X\to Y$ is not injective. Then there exists $n>0$ such that $\phi\colon X_k\to Y_k$ is injective for $k<n$ and $\phi\colon X_n\to Y_n$ is not injective. It follows that there exists $z_1,z_2\in X_n$ with $z_1\not=z_2$ such that $\phi(z_1)=\phi(z_2)$. Then $\{z_1,z_2\}\not\subseteq \H_n$ because $\phi|_{\H}$ is injective. Since $\phi$ is a $\Delta$-set map, we have
$$
\phi(d_iz_1)=d_i\phi(z_1)=d_i\phi(z_2)=\phi(d_iz_2)
$$
for $0\leq i\leq n$ and $\phi\colon X_{n-1}\to Y_{n-1}$ is injective. Therefore $d_iz_1=d_iz_2$ for $0\leq i\leq n$. However, by the matching face property, $z_1=z_2$, which contradicts the assumption that $z_1\not=z_2$.
\end{proof}

For a given regular super-hypergraph $(\H,X)$, the above proof gives a way to construct a complete super-hypergraph $(\H,Y)$ with $Y$ as a $\Delta$-quotient of $X$. The following example shows that $(\H,X)$ may have non-isomorphic complete quotients.

\begin{example}
Let
$$
\H=\{\{0,1,2\}, \{0,1\}, \{0,2\},\{1,2\},\{1\},\{2\}\}
$$
be the graded subset of the $2$-simplex $X=\Delta^+[2]$ without the $0$ vertex. Let $Y$ be the $\Delta$-quotient of $X$ by identifying vertices $\{1\}$ and $\{0\}$, and let $Z$  be the $\Delta$-quotient of $X$ by identifying vertices $\{2\}$ and $\{0\}$. Then
 \begin{enumerate}
\item[1)]  $(\H, X)$, $(\H,Y)$ and $(\H,Z)$ are regular super-hypergraph.
\item[2)] $(\H,Y)$ and $(\H,Z)$ are complete, but $(\H, X)$ is not complete.
\item[3)] There are non-injective $\Delta$-quotients $X\twoheadrightarrow Y$ and $X\twoheadrightarrow Z$ with $(\H,Y)\not\cong (\H,Z)$.
\end{enumerate}
\end{example}

\subsubsection{Mayer-Vietoris Sequence}

The Mayer-Vietoris sequence (MV sequence) is one of the fundamental tools in topology for inductively computing homology. In general, the MV sequence fails for embedded homology of super-hypergraphs.

\begin{example}\label{example9.19}
Let $X=\{f_1,f_2,e_1,e_2, v\}$ be a $\Delta$-set with face operations given by
\begin{enumerate}
\item $d_0f_i=d_2f_i=e_1$, $d_1f_i=e_2$ where $j=1,2$, and
\item $d_ie_j=v$ for $i=0,1$ and $j=1,2$.
\end{enumerate}
Let
$$
\H=X\smallsetminus\{e_1,e_2\}
$$
be the graded subset of $X$ missing the edges $e_1,e_2$. Let $A=\{f_1, e_1,e_2,v\}$ and $B=\{f_2, e_1,e_2,v\}$ with face operations induced from $X$. Then
$$
(\H,X)=(\H\cap A, A)\cup (\H\cap B, B)
$$
with $A\cap B=\{e_1,e_2,v\}$ and $\H\cap A\cap B=\{v\}$. Then there is no  exact sequence 
\[\xymatrix{
\cdots \ar[r]& H_2^{\emb, A}(\H\cap A)\oplus H_2^{\emb,B}(\H\cap B)\ar[r]& H_2^{\emb, X}(\H)\ar[r]& H_1^{\emb, A\cap B}(\H\cap A\cap B)\ar[r]&\cdots}
\]
Since $\H_1=\emptyset$ and $\H_2=X_2$,
$$
\begin{array}{rcl}
\inf^{C_*(A)}_2(\Z(\H\cap A))&=& \Z(\H_2\cap A)\cap\Ker(\partial^A_2)=0,\\
\inf^{C_*(B)}_2(\Z(\H\cap B))&=& \Z(\H_2\cap B)\cap\Ker(\partial^B_2)=0,\\
\inf^{C_*(X)}_2(\Z(\H))&=&\Z(\H_2)\cap\Ker(\partial^X_2)=\Z\{f_1-f_2\}=\Z,\\
\inf^{C_*(X)}_1(\Z(\H))&=&\Z(\H_1)\cap (\partial^X_1)^{-1}(\Z(\H_0))=0,\\
\inf^{C_*(A\cap B)}_1(\Z(\H\cap A\cap B))&=&0.\\
\end{array}
$$
Then $H_2^{\emb, A}(\H\cap A)=H_2^{\emb,B}(\H\cap B)=H_1^{\emb, A\cap B}(\H\cap A\cap B)=0$ and $H_2^{\emb, X}(\H)=\Z$. Hence the above sequence cannot be exact.
\end{example}

An analogue of the classical Mayer-Vietoris sequence for super-hypergraphs is a multi-exact sequence derived from the following theorem.

\begin{theorem}\label{thm:mv_seq_SHG}
Let $(\H,X)$ be a super-hypergraph and let $A$ and $B$ be $\Delta$-subsets of $X$ such that $A\cup B=X$. Let $\H^A=\H\cap A$, $\H^B=\H\cap B$ and $\H^{A\cap B}=\H\cap A\cap B$. Let $G$ be an abelian group, and denote $\sup^{C_*(X;G)}_*(\H)$ and $\inf^{C_*(X;G)}_*(\H)$ by $\sup^X_*(\H)$ and $\inf^X_*(\H)$, respectively.

Then there is a commutative diagram
\[\xymatrix{
&&\sup^{X}_*(\H)\ar@{=}[d]\\
\sup^{A}_*(\H^A)\cap \sup^{B}_*(\H^B)\ar@{^{(}->}[r] & \sup^{A}_*(\H^A)\oplus \sup^{B}_*(\H^B)\ar@{->>}[r]^{j_A-j_B}& \sup^{A}_*(\H^A)+ \sup^{B}_*(\H^B)\\
\inf^{A}_*(\H^A)\cap \inf^{B}_*(\H^B))\ar@{^{(}->}[u]\ar@{^{(}->}[r]& \inf^{A}_*(\H^A)\oplus \inf^{B}_*(\H^B))\ar@{^{(}->}[u]^{\simeq}\ar@{->>}[r]^{j_A|-j_B|}& \inf^{A}_*(\H^A)+ \inf^{B}_*(\H^B))\ar@{^{(}->}[u]\\
\inf^{A\cap B}_*(\H^{A\cap B})\ar@{=}[u]&&}
\]
where the middle two rows are short exact sequences of chain complexes, the maps $j_A$ and $j_B$ are canonical inclusions and the vertical arrows are inclusions.

\end{theorem}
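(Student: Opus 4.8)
The plan is to realize the entire diagram inside the single ambient chain complex $C_*(X;G)$, and then to reduce the nontrivial content to two identities of subcomplexes: the top identity $\sup^A_*(\H^A)+\sup^B_*(\H^B)=\sup^X_*(\H)$, and the bottom identity $\inf^A_*(\H^A)\cap\inf^B_*(\H^B)=\inf^{A\cap B}_*(\H^{A\cap B})$. First I would note that since $A$, $B$ and $A\cap B$ are $\Delta$-subsets of $X$, the chain complexes $C_*(A;G)$, $C_*(B;G)$, $C_*(A\cap B;G)$ are canonically subcomplexes of $C_*(X;G)$, and each boundary map $\partial^A$, $\partial^B$, $\partial^{A\cap B}$ is the restriction of $\partial^X$. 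Consequently all six of the $\sup$ and $\inf$ complexes in the diagram are subcomplexes of $C_*(X;G)$, so the sums, intersections, and the difference map $j_A-j_B$ make sense literally and the vertical maps are honest inclusions.

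The two combinatorial facts that drive everything are elementary statements about free modules on subsets of a basis: for subsets $S,T$ of $X_n$ one has $\Z(S)\otimes G+\Z(T)\otimes G=\Z(S\cup T)\otimes G$ and $(\Z(S)\otimes G)\cap(\Z(T)\otimes G)=\Z(S\cap T)\otimes G$ inside $C_n(X;G)$. Since $A\cup B=X$ gives $\H_n=\H^A_n\cup\H^B_n$, while $\H^{A\cap B}_n=\H^A_n\cap\H^B_n$ holds trivially, I can then compute degreewise. For the top identity I would invoke $\sup_n(D_*)=D_n+\partial_{n+1}(D_{n+1})$ from Proposition~\ref{proposition9.2}(1): adding the formulas for $A$ and $B$ and using additivity of the sum and of $\partial_{n+1}$ across the union-as-sum identity yields $\sup^A_n(\H^A)+\sup^B_n(\H^B)=\Z(\H_n)\otimes G+\partial_{n+1}(\Z(\H_{n+1})\otimes G)=\sup^X_n(\H)$. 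For the bottom identity I would use $\inf_n(D_*)=D_n\cap\partial_n^{-1}(D_{n-1})$ from Proposition~\ref{proposition9.2}(2): a chain $x$ lies in $\inf^A_n(\H^A)\cap\inf^B_n(\H^B)$ exactly when $x$ lies in $\Z(\H^A_n)\otimes G\cap\Z(\H^B_n)\otimes G$ and $\partial_n x$ lies in both $\Z(\H^A_{n-1})\otimes G$ and $\Z(\H^B_{n-1})\otimes G$; applying the intersection-as-intersection identity in degrees $n$ and $n-1$, together with the fact that $\partial_n^{-1}$ commutes with intersections, identifies this set with $\Z(\H^{A\cap B}_n)\otimes G\cap\partial_n^{-1}(\Z(\H^{A\cap B}_{n-1})\otimes G)=\inf^{A\cap B}_n(\H^{A\cap B})$.

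With the two identities in hand, the middle two rows are instances of the standard short exact sequence of abelian groups $0\to P\cap Q\to P\oplus Q\xrightarrow{x-y}P+Q\to 0$ attached to two subgroups $P,Q$ of a common ambient group, applied degreewise with $P=\sup^A_*(\H^A)$, $Q=\sup^B_*(\H^B)$ (respectively the two $\inf$ complexes). I would check that the diagonal inclusion and the difference map $j_A-j_B$ commute with $\partial^X$, which is immediate since all legs are restrictions of a single boundary operator; this upgrades the levelwise sequences to short exact sequences of chain complexes. Finally, the left and right vertical arrows are inclusions because $\inf_*\subseteq\sup_*$ in each case, and the middle vertical arrow is a quasi-isomorphism because each of $\inf^A_*(\H^A)\hookrightarrow\sup^A_*(\H^A)$ and its $B$-counterpart induces an isomorphism on homology by Proposition~\ref{thm:inf->sup_inclus} (the normalizer hypothesis being automatic over abelian groups), and a direct sum of quasi-isomorphisms is a quasi-isomorphism.

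The step I expect to require the most care is the bottom identity for the infimum complexes: unlike the supremum case, where everything is expressed through sums and the image of $\partial$, the infimum mixes an intersection with a preimage $\partial_n^{-1}$, so I must simultaneously track that a chain lies in the correct free submodule in degree $n$ and that its boundary lands in the correct free submodule in degree $n-1$, and verify that restricting to $A$, to $B$, and to $A\cap B$ never changes the value of the boundary. The whole argument hinges on the observation that all boundary operators in sight are restrictions of $\partial^X$, which is exactly what makes the preimages computed in $C_*(A;G)$, $C_*(B;G)$ and $C_*(X;G)$ mutually compatible.
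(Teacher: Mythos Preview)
Your proposal is correct and follows essentially the same approach as the paper: both reduce the theorem to the two identities $\sup^A_*(\H^A)+\sup^B_*(\H^B)=\sup^X_*(\H)$ and $\inf^A_*(\H^A)\cap\inf^B_*(\H^B)=\inf^{A\cap B}_*(\H^{A\cap B})$, with the short exact rows then coming from the standard $P\cap Q\hookrightarrow P\oplus Q\twoheadrightarrow P+Q$ sequence. The only stylistic difference is that you verify the two identities by plugging into the explicit degreewise formulas of Proposition~\ref{proposition9.2}, whereas the paper argues via the universal characterizations (smallest subcomplex containing, largest subcomplex contained in) from the same proposition; the two arguments are interchangeable and equally short.
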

Example~\ref{example9.19} shows that the left and right vertical arrows in the above diagram are not chain homotopy equivalences. However, the middle vertical arrow is always a chain homotopy equivalence.
\begin{proof}
We need to show that $$\sup^{A}_*(\H^A)+ \sup^{B}_*(\H^B)=\sup^{X}_*(\H) \textrm{ and }$$ $$\inf^{A\cap B}_*(\H^{A\cap B})=\inf^{A}_*(\H^A)\cap \inf^{B}_*(\H^B).$$
First we prove that $\sup^{A}_*(\H^A)+ \sup^{B}_*(\H^B)=\sup^{X}_*(\H)$. Since $\sup^{A}_*(\H^A)+ \sup^{B}_*(\H^B)$ is a sub complex of $C_*(X;G)$ containing $\Z(\H)\otimes G=\Z(\H^A)\otimes G+\Z(\H^B)\otimes G$, we have $\sup^{X}_*(\H)\subseteq \sup^{A}_*(\H^A)+ \sup^{B}_*(\H^B)$. On the other hand, $\sup^{A}_*(\H^A), \sup^{B}_*(\H^B)\subseteq \sup^{X}_*(\H)$. Thus $\sup^{A}_*(\H^A)+ \sup^{B}_*(\H^B)\subseteq \sup^{X}_*(\H)$, and so $\sup^{X}_*(\H)= \sup^{A}_*(\H^A)+ \sup^{B}_*(\H^B)$

Now we show that $\inf^{A\cap B}_*(\H^{A\cap B})=\inf^{A}_*(\H^A)\cap \inf^{B}_*(\H^B).$ Clearly
$$
\inf^{A\cap B}_*(\H^{A\cap B})\subseteq \inf^{A}_*(\H^A)\cap \inf^{B}_*(\H^B).
$$
Conversely, note that
$$
\inf^{A}_*(\H^A)\cap \inf^{B}_*(\H^B)\subseteq (\Z(\H\cap A)\otimes G)\cap (\Z(\H\cap B)\otimes G)=\Z(\H\cap A\cap B)\otimes G
$$
where $(\Z(\H\cap A)\otimes G)\cap (\Z(\H\cap B)\otimes G)=\Z(\H\cap A\cap B)\otimes G$ because $C_*(X;G)$ is the direct sum of the copies of $G$ with its coordinates labeled by the graded set $X$. Thus $\inf^{A}_*(\H^A)\cap \inf^{B}_*(\H^B)$ is a subcomplex of $C_*(X;G)$ contained in $\Z(\H\cap A\cap B)\otimes G$, and so it is contained in $\inf^{A\cap B}_*(\H^{A\cap B})$.
\end{proof}

\begin{corollary}Using the notation as in the theorem above, the inclusion 
\[\xymatrix{
\inf^{A}_*(\H^A)\cap \inf^{B}_*(\H^B)\ \ar@{^{(}->}[r] & \sup^{A}_*(\H^A)\cap \sup^{B}_*(\H^B)}
\]
is a chain homotopy equivalence if and only if so is the inclusion 
\[\xymatrix{
\inf^{A}_*(\H^A)+ \inf^{B}_*(\H^B) \ \ar@{^{(}->}[r] & \sup^{A}_*(\H^A)+ \sup^{B}_*(\H^B).}
\]
\end{corollary}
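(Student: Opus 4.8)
The plan is to recognise the corollary as a statement about a \emph{morphism of two short exact sequences of chain complexes}, namely the middle two rows of the diagram in Theorem~\ref{thm:mv_seq_SHG}, and then to feed this morphism into the five lemma. To keep the displays short, abbreviate the inf-row by
\[
\inf{}^{\cap}=\inf{}^{A}_*(\H^A)\cap\inf{}^{B}_*(\H^B),\quad
\inf{}^{\oplus}=\inf{}^{A}_*(\H^A)\oplus\inf{}^{B}_*(\H^B),\quad
\inf{}^{+}=\inf{}^{A}_*(\H^A)+\inf{}^{B}_*(\H^B),
\]
and write $\sup^{\cap},\sup^{\oplus},\sup^{+}$ for the analogous sup-complexes. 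Theorem~\ref{thm:mv_seq_SHG} supplies two short exact sequences of chain complexes,
\[
0\to\inf{}^{\cap}\to\inf{}^{\oplus}\to\inf{}^{+}\to 0,
\qquad
0\to\sup^{\cap}\to\sup^{\oplus}\to\sup^{+}\to 0,
\]
and the three vertical inclusions assemble them into a commutative ladder. The single input I need is that the central vertical map $\inf^{\oplus}\hookrightarrow\sup^{\oplus}$ induces an isomorphism on homology: it is the direct sum of the inclusions $\inf^{A}_*(\H^A)\hookrightarrow\sup^{A}_*(\H^A)$ and $\inf^{B}_*(\H^B)\hookrightarrow\sup^{B}_*(\H^B)$, each a homology isomorphism by Proposition~\ref{thm:inf->sup_inclus} (this is exactly the $\simeq$ on the middle arrow of the diagram). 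The two outer vertical maps are precisely the inclusions named in the corollary.

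Next I would apply the homology functor. Each short exact sequence yields a long exact homology sequence, and naturality turns the ladder above into a commutative ladder between these two long exact sequences in which every vertical map at an ``$\oplus$'' spot is an isomorphism. I then invoke the five lemma on two overlapping five-term windows. In the window
\[
H_n(\inf{}^{\cap})\to H_n(\inf{}^{\oplus})\to H_n(\inf{}^{+})\to H_{n-1}(\inf{}^{\cap})\to H_{n-1}(\inf{}^{\oplus})
\]
(and its sup-analogue) the first and last vertical maps are $\oplus$-isomorphisms, so if both $\cap$-maps are isomorphisms then the central $+$-map is an isomorphism. Running instead the window centred at a $\cap$-term, namely
\[
H_{n+1}(\inf{}^{\oplus})\to H_{n+1}(\inf{}^{+})\to H_n(\inf{}^{\cap})\to H_n(\inf{}^{\oplus})\to H_n(\inf{}^{+}),
\]
gives the reverse implication, with the roles of $\cap$ and $+$ exchanged. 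Letting $n$ range over all integers, this shows that $\inf^{\cap}\hookrightarrow\sup^{\cap}$ is a homology isomorphism in every degree if and only if $\inf^{+}\hookrightarrow\sup^{+}$ is.

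Finally I would upgrade ``homology isomorphism in all degrees'' to ``chain homotopy equivalence''. Every complex in sight is a subcomplex of $C_*(X;G)$ and vanishes in negative degrees, so over $\Z$ (or over a field) each is a bounded-below complex of free modules; a homology isomorphism between bounded-below complexes of projectives is automatically a chain homotopy equivalence, or equivalently its mapping cone is acyclic and hence contractible. The main point demanding care is exactly this last passage: the five-lemma argument only transports \emph{homological} isomorphisms, whereas the corollary asserts a chain homotopy equivalence, so one must use freeness of the coefficients (equivalently, degreewise splitting of the two short exact sequences, which then lets one argue directly in the homotopy category via the triangulated five lemma) to convert the homological conclusion into a homotopy-theoretic one. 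With that conversion the equivalence of the two conditions is immediate.
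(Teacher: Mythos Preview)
Your argument is correct and follows the same approach as the paper, which simply says ``The statement follows by applying the Five Lemma to the long exact sequence obtained from Theorem~\ref{thm:mv_seq_SHG}.'' You have in fact been more careful than the paper: your final paragraph distinguishing homology isomorphism from chain homotopy equivalence (and invoking freeness over $\Z$ or a field to bridge the gap) addresses a point the paper glosses over, since elsewhere it uses the two notions interchangeably without comment.
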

\begin{proof}
The statement follows by applying the Five Lemma to the long exact sequence obtained from Theorem~\ref{thm:mv_seq_SHG}.
\end{proof}

\subsubsection{Gap Complexes}
Let $(\H,X)$ be a super-hypergraph and define
\begin{equation}\label{def:small_delta_H}
\delta^X(\H)=\bigcup\{ Y \subseteq\H \ | \ Y \textrm{ is a } \Delta-\textrm{subset of } X\}
\end{equation}
to be the largest $\Delta$-subset of $X$ contained in $\H$. Then $\delta^X(\H)$ consists of the elements in $\H$ whose all iterated faces lie in $\H$. The gap between $\delta^X(\H)\subseteq \Delta^X(\H)$ measures how far $\H$ is from being a $\Delta$-set. For a finite hypergraph $\H$, the differences can be expressed as
$$
\#(\Delta^X(\H)\smallsetminus\delta^X(\H))=\#(\Delta^X(\H))-\#(\delta^X(\H)).
$$

Topological invariants of the geometric gap complex
\begin{equation}\label{geometric gap complex}
|\Delta^X(\H)|/|\delta^X(\H)|
\end{equation}
such as homology groups and homotopy groups, provide different means of measuring how far $\H$ is from being a $\Delta$-set.

Algebraically, at the chain level
\begin{equation}
\xymatrix{
C_*(\delta^X(\H);G)\ar@{^{(}->}[r]& \inf_*^{C_*(\Delta^X;G)}(G(\H))\subseteq G(\H)\subseteq \sup_*^{C_*(\Delta^X;G)}(G(\H))\ar@{^{(}->}[r]&C_*(\Delta^X(\H);G)}
\end{equation}
where $G(\H)=\Z(\H)\otimes G$. An important consequence of Proposition~\ref{thm:inf->sup_inclus} is that the inclusion
$$
\xymatrix{
\inf_*^{C_*(\Delta^X;G)}(G(\H))\ar@{^{(}->}[r]& \sup_*^{C_*(\Delta^X;G)}(G(\H))}
$$
is a chain homotopy equivalence, 
which implies that the algebraic gap complex~(\ref{equation-gap}) is acyclic. However, the geometric gap complex~(\ref{geometric gap complex}) is not contractible in general. For example, let $X$ be any $\Delta$-set and let $\H$ be the graded subset of $X$ by removing the vertex set $X_0$. Then $\delta^X(\H)=\emptyset$ and so its geometric gap complex is $|X|^{+}$, the space $|X|$ disjoint union with a one-point set. The homology of the chain complexes
$$
\inf_*^{C_*(\Delta^X;G)}(G(\H))/C_*(\delta^X(\H);G),
$$
$$
C_*(\Delta^X(\H);G)/\sup_*^{C_*(\Delta^X;G)}(G(\H))
$$
could give extra information in addition to the topology of the geometric gap complex~(\ref{geometric gap complex}).

The proof of the following proposition is immediate.

\begin{proposition}
Let $(\H,X)$ be a super-hypergraph. Then
$$
\inf_*^{C_*(\Delta^X(\H);G)}(G(\H))/C_*(\delta^X(\H);G)=\inf^{C_*(\Delta^X(\H);G)/C_*(\delta^X(\H);G)}_*(G(\H)/C_*(\delta^X(\H);G)),
$$

$$
\sup_*^{C_*(\Delta^X(\H);G)}(G(\H))/C_*(\delta^X(\H);G)=\sup^{C_*(\Delta^X(\H);G)/C_*(\delta^X(\H);G)}_*(G(\H)/C_*(\delta^X(\H);G)).
$$

\hfill $\Box$
\end{proposition}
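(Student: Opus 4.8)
The plan is to reduce both identities to the explicit descriptions of $\inf$ and $\sup$ provided by Proposition~\ref{proposition9.2} and then to verify that these formulas survive passage to the quotient. Throughout write $G(\H)=\Z(\H)\otimes G$, abbreviate $C_*=C_*(\Delta^X(\H);G)$ and $K_*=C_*(\delta^X(\H);G)$, and let $\pi\colon C_*\lra C_*/K_*$ be the projection with induced boundary $\bar\partial$. Since $\delta^X(\H)\subseteq\H$ as graded sets, $K_*$ is a subcomplex of $C_*$ contained in $G(\H)$; because $\inf_*^{C_*}(G(\H))$ is by Proposition~\ref{proposition9.2}(2) the \emph{largest} subcomplex of $C_*$ contained in $G(\H)$, this gives $K_*\subseteq\inf_*^{C_*}(G(\H))\subseteq G(\H)\subseteq\sup_*^{C_*}(G(\H))$. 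In particular every quotient in the statement is defined, and $G(\H)/K_*$ is a genuine graded subgroup of the chain complex $C_*/K_*$.

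For the supremum identity I would simply transport the formula $\sup_n^{C_*}(G(\H))=G(\H)_n+\partial_{n+1}(G(\H)_{n+1})$ through $\pi$. Using $\bar\partial_{n+1}\pi=\pi\partial_{n+1}$ and $(G(\H)/K_*)_n=\pi(G(\H)_n)$, one computes
$$
\sup_n^{C_*/K_*}(G(\H)/K_*)=\pi(G(\H)_n)+\pi\big(\partial_{n+1}(G(\H)_{n+1})\big)=\pi\big(G(\H)_n+\partial_{n+1}(G(\H)_{n+1})\big),
$$
which equals $\pi(\sup_n^{C_*}(G(\H)))=\sup_n^{C_*}(G(\H))/K_n$ since $K_n\subseteq\sup_n^{C_*}(G(\H))$. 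Nothing delicate occurs here because a homomorphism always carries a sum of subgroups to the sum of their images.

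For the infimum identity the same idea applies to $\inf_n^{C_*}(G(\H))=G(\H)_n\cap\partial_n^{-1}(G(\H)_{n-1})$, but the intersection needs attention. Because $K_{n-1}\subseteq G(\H)_{n-1}$, an element $x\in C_n$ has $\bar\partial_n\pi(x)\in(G(\H)/K_*)_{n-1}$ exactly when $\partial_n(x)\in G(\H)_{n-1}$, so $\bar\partial_n^{-1}\big((G(\H)/K_*)_{n-1}\big)=\pi\big(\partial_n^{-1}(G(\H)_{n-1})\big)$ and hence
$$
\inf_n^{C_*/K_*}(G(\H)/K_*)=\pi(G(\H)_n)\cap\pi\big(\partial_n^{-1}(G(\H)_{n-1})\big).
$$
It then remains to see that $\pi$ commutes with this intersection, namely $\pi(A)\cap\pi(B)=\pi(A\cap B)$ for $A=G(\H)_n$ and $B=\partial_n^{-1}(G(\H)_{n-1})$. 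This is the single step that is not formal, and it is precisely where $K_*\subseteq G(\H)$ enters: since $K_n\subseteq A$, two representatives of a common class in $\pi(A)\cap\pi(B)$ differ by an element of $K_n\subseteq A$, so one may choose a representative in $A\cap B$. Granting this, the right-hand side is $\pi(\inf_n^{C_*}(G(\H)))=\inf_n^{C_*}(G(\H))/K_n$, finishing the argument.

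I expect the only real obstacle to be the verification that $\pi$ commutes with the intersection in the infimum case; the containment $K_*\subseteq\inf_*^{C_*}(G(\H))$, the intertwining of $\pi$ with $\partial$, and the preservation of sums are all formal. This is why the statement is, as the authors note, immediate: it is essentially bookkeeping with the formulas of Proposition~\ref{proposition9.2} once one observes that $K_*$ sits inside $G(\H)$.
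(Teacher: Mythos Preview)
Your argument is correct and is precisely the unpacking of what the paper means by ``immediate'': the authors give no proof beyond the $\Box$, and the intended verification is exactly the one you carry out, namely applying the explicit formulas of Proposition~\ref{proposition9.2} together with the observation that $K_*\subseteq G(\H)$ (so $\ker\pi$ sits inside one term of the relevant intersection). There is nothing to add or correct.
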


\subsubsection{Computations}

With the potential applications in mind, it is important to consider the computability of these topological constructions. There have been various algorithms developed for computing simplicial homology that have led to the applications of topology in data analytics. The computations of embedded homology are quite similar to those of simplicial homology. Below we detail a procedure for computing embedded homology.

For a super-hypergraph $(\H,X)$, let us consider the computations for $H_*^{\emb,X}(\H;\F)$ using the chain complex $\inf_*^{C_*(X;\F)}(\H;\F)$ with coefficients in a field $\F$.

By definition, $$H_n^{\emb,X}(\H,F)=Z_n(\inf_*^{C_*(X;\F)}(\H;\F))/B_n(\inf_*^{C_*(X;\F)}(\H;\F))$$  where
$$
Z_n(\inf_*^{C_*(X;\F)}(\H;\F))=\F(\H_n)\cap\Ker(\partial_n\colon C_n(X;\F)\to C_{n-1}(X;\F)),
$$
$$
B_n(\inf_*^{C_*(X;\F)}(\H;\F))=\F(\H_n)\cap \partial_{n+1}(C_{n+1}(X;\F)).
$$
The \textit{Betti number} $b_n(\H, X)$ is defined as
\begin{align}\label{equation}
b_n(\H, X)&=\dim H_n^{\emb,X}(\H;\F)\\
&=\dim (Z_n(\inf_*^{C_*(X;\F)}(\H;\F)) -\dim (B_n(\inf_*^{C_*(X;\F)}(\H;\F))) \nonumber.
\end{align}

To compute $Z_n(\inf_*^{C_*(X;\F)}(\H;\F))$, we can consider the restriction of the linear transformation $\partial_{n}\colon C_n(X;\F)\to C_{n-1}(X;\F)$ to $\F(\H_n)$. Namely, consider $\F(\H_n)$ as a vector spaces over $\F$ with a basis given by the elements in $\H_n$. For each element $x$ in $\H_n$, express $\partial_n(x)$ as an element in $C_{n-1}(X;\F)=\F(X_{n-1})$. This defines a linear transformation
$$
\partial_n|\colon \F(\H_n)\longrightarrow \F(X_{n-1})
$$
whose kernel is $Z_n(\inf_*^{C_*(X;\F)}(\H;\F))$.

For computing $B_n(\inf_*^{C_*(X;\F)}(\H;\F))$, we can first consider $\partial_{n+1}(C_{n+1}(X;\F))$ as a subspace of the vector space $C_n(X;\F)=\F(X_n)$, which is spanned by linear combinations of $\partial_{n+1}(\sigma)$ for $\sigma\in X_{n+1}$. Then consider the decomposition
$$
\F(X_n)=\F(\H_n)\oplus \F(X_n\smallsetminus \H_n).
$$
Let
$$
p\colon \F(X_n)\to \F(X_n\smallsetminus \H_n)
$$
be the projection. Then $B_n(\inf_*^{C_*(X;\F)}(\H;\F))$ is the kernel of the restriction
$$
p|\colon \partial_{n+1}(C_{n+1}(X;\F))\longrightarrow \F(X_n\smallsetminus \H_n).
$$

If the data $(\H,X)$ is large, the complexity of  direct computation of $H_*^{\emb,X}(\H;\F)$ increases. The existing computational methods for chain complexes aim at reducing this complexity.

\section{Super-Persistent Homology}\label{section4}
The general idea of super-persistent homology is to use the geometry of $\Delta$-sets and super-hypergraphs as tools to investigate collections of subgraphs in a given graph and to get topological features from for example graphic data and various networks. We will see that a $\Delta$-set model performs much better than models using simplicial complexes, particularly for exploring topological features arising from the structures related to clustering.

\subsection{General Theory}

Let $G$ be a directed/undirected (multi)-graph. Let $\mathcal{FP}(G)$ denote the set of all finite subgraphs of $G$.
\begin{definition}
A $\Delta$-set $X$ is said to be \textit{dominated by $G$} if there exists an injective map $\phi\colon X\longrightarrow \mathcal{FP}(G)$ such that $\phi(d_i(\sigma))$ is a subgraph of $\phi(\sigma)$ for any $0\leq i\leq n$ and any element $\sigma\in X_n$.

A super-hypergraph $(\H, X)$ is said to be \textit{dominated by $G$} if its parental $\Delta$-set $X$ is dominated by $G$.
\end{definition}

For a $\Delta$-set $X$ dominated by $G$, we identify the elements $\sigma$ in $X$ with its image $\phi(\sigma)$, a finite subgraph of $G$, and so we consider $X$ as a collection of finite subgraphs of $G$. A super-hypergraph $(\H,X)$ dominated by $G$ can be described as a multi-layered collection of families of finite subgraphs $X=\{X_0,X_1,\ldots\}$, where each $X_i$ is a family of finite subgraphs, with face operations $d_i\colon X_n\to X_{n-1}$, $0\leq i\leq n$, satisfying the $\Delta$-identity, and a marked graded subset $\H$ of $X$.

To introduce persistence we need a scoring scheme on  $G$. For graphs $P$ and $Q$, denote by $P\preceq Q$ if $P$ is a subgraph of $Q$.

\begin{definition}
A \textit{scoring scheme} on a directed/undirected (multi-)graph $G$ is a function
$$
\mathfrak{M}\colon \mathcal{FP}(G)\longrightarrow \R
$$
from the set of finite subgraphs of $G$ to the real numbers.

A scoring scheme $\mathfrak{M}$ on $G$ is called \textit{regular} if for every $ P,Q \in \mathcal{FP}(G)$ such that $P\preceq Q$,
$$
\mathfrak{M}(P)\leq \mathfrak{M}(Q).
$$
\end{definition}

\begin{definition}
A \textit{persistent $\Delta$-filtration} of a $\Delta$-set $X$ over $\R$ is a family of $\Delta$-subsets $X(t)$ of $X$, indexed by $t\in \R$, such that
\begin{enumerate}
\item[1)] $X(s)$ is a $\Delta$-subset of $X(t)$ for $s\leq t$, and
\item[2)] $X=\bigcup\limits_{t\in\R}X(t)$.
\end{enumerate}

A \textit{persistent super-hypergraph filtration} of a super-hypergraph $(\H,X)$ over $\R$ is a family of super-hypergraphs $(\H(t),X(t))$, indexed by $t\in\R$, such that
\begin{enumerate}
\item[1)] the indexed family $X(t)$, $t\in\R$, is a persistent $\Delta$-filtration of $X$ over $\R$,
\item[2)] $\H(t)=\H\cap X(t)$.
\end{enumerate}
\end{definition}

\begin{proposition}
Let $G$ be a directed/undirected (multi-)graph with a regular scoring scheme $\mathfrak{M}\colon \mathcal{FP}(G)\longrightarrow \R$. Let $(\H,X)$ be a super-hypergraph dominated by $G$. Then
$$
X(t)=\mathfrak{M}^{-1}((-\infty,t])\cap X, \quad t\in\R
$$
is a persistent $\Delta$-filtration of $X$, and the pair
$$
(\H(t),X(t))=(\mathfrak{M}^{-1}((-\infty,t])\cap \H,\mathfrak{M}^{-1}((-\infty,t])\cap X), \quad a\in\R
$$
is a persistent super-hypergraph filtration of $(\H,X)$.
\end{proposition}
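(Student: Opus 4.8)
The plan is to verify, in turn, the three defining conditions of a persistent $\Delta$-filtration for the family $X(t)$, and then the two conditions for a persistent super-hypergraph filtration. Throughout I would use the domination hypothesis to identify each $\sigma\in X_n$ with the finite subgraph $\phi(\sigma)\in\mathcal{FP}(G)$ and write $\mathfrak{M}(\sigma)$ for $\mathfrak{M}(\phi(\sigma))$. The only condition requiring real content is that each sublevel set $X(t)$ is genuinely a $\Delta$-subset of $X$, that is, closed under all face operations; the remaining assertions reduce to elementary bookkeeping about preimages of a function.

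The crux, which I would establish first, is closure of $X(t)=\mathfrak{M}^{-1}((-\infty,t])\cap X$ under every face operation $d_i$, and this is precisely where both hypotheses are used simultaneously. Fix $\sigma\in X(t)\cap X_n$ and $0\leq i\leq n$. By the domination hypothesis, $\phi(d_i\sigma)$ is a subgraph of $\phi(\sigma)$, that is $\phi(d_i\sigma)\preceq\phi(\sigma)$. Regularity of $\mathfrak{M}$ then gives
$$
\mathfrak{M}(d_i\sigma)=\mathfrak{M}(\phi(d_i\sigma))\leq \mathfrak{M}(\phi(\sigma))=\mathfrak{M}(\sigma)\leq t,
$$
so $d_i\sigma\in\mathfrak{M}^{-1}((-\infty,t])$; and $d_i\sigma\in X$ because $X$ is a $\Delta$-set. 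Hence $d_i\sigma\in X(t)$, proving that $X(t)$ is a $\Delta$-subset of $X$. I expect this single implication --- descending to a face never increases the score --- to be the genuinely load-bearing step, since it is the exact reason the sublevel sets are $\Delta$-subsets rather than merely arbitrary graded subsets.

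The remaining conditions I would dispatch routinely. Monotonicity follows because $s\leq t$ forces $(-\infty,s]\subseteq(-\infty,t]$, hence $X(s)\subseteq X(t)$, and both being $\Delta$-subsets makes $X(s)$ a $\Delta$-subset of $X(t)$. Exhaustion follows since each $\sigma\in X$ satisfies $\sigma\in X(\mathfrak{M}(\sigma))$, giving $X=\bigcup_{t\in\R}X(t)$. These three facts show $X(t)$ is a persistent $\Delta$-filtration. Finally, for the super-hypergraph filtration, condition (1) is exactly what has just been proved, and condition (2) is immediate from $\H\subseteq X$:
$$
\H\cap X(t)=\H\cap\big(\mathfrak{M}^{-1}((-\infty,t])\cap X\big)=\H\cap\mathfrak{M}^{-1}((-\infty,t])=\H(t).
$$
Since $\H(t)=\H\cap X(t)$ is a graded subset of the $\Delta$-set $X(t)$, each pair $(\H(t),X(t))$ is a super-hypergraph, and the proof is complete.
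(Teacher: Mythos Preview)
Your proof is correct and is precisely the unpacking of the paper's own proof, which reads in full: ``The proof follows from the definitions.'' You have spelled out the routine verifications, with the one substantive step --- that domination plus regularity force each $X(t)$ to be closed under faces --- identified exactly as the load-bearing point.
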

\begin{proof}
The proof follows from the definitions.
\end{proof}

Let $\F$ be a fixed choice of a ground field. A \textit{(graded/ungraded) persistence module} over $\F$, denoted by $\mathbb{V}$, is defined to be an indexed family of (graded/ungraded) $\F$ vector spaces
$$
\mathbb{V} = (V(t)\ |\ t \in \R)
$$
and a bi-indexed family of (graded/ungraded) linear maps
$$
(v_s^t\colon V(s) \to V(t)\ |\ s \leq t)
$$
which satisfy the composition law
$$
v_s^t\circ v_r^s= v_r^t
$$
whenever $r\leq s \leq t$, where $v^t_t$ is the identity map on $V(t)$. For graded vector spaces $V$ and $W$, a \textit{graded linear map} of degree $q$ is a collection of linear maps $\phi=(\phi_n)_{n\in\Z}$ with $\phi_n\colon V_n\to W_{n+q}$. A \textit{persistence morphism $\Phi$ of dregree $q$} between two graded persistence modules $\mathbb{V}$ and $\mathbb{W}$ is a collection of graded linear maps of degree $q$, $(\phi^t\colon V(t)\to W(t) \ | \ t\in\R)$, such that the diagram
$$
\xymatrix{
V(s)\ar@{->}[r]^{v_s^t}\ar@{->}[d]^{\phi^s}&V(t)\ar@{->}[d]^{\phi^t}\\
W(s)\ar@{->}[r]^{w_s^t}&W(t)\\}
$$
commutes for $s\leq t$. If $q=0$, then $\Phi$ is called a \textit{persistence morphism}. A persistence morphism between ungraded persistence modules is defined in the same way.

\begin{definition}
Let $(\H,X)$ be a super-hypergraph. Let $A$ be an abelian group. The \textit{relative embedded homology $H_*^{\emb,X}(X,\H;A)$ with coefficients in $A$} of $(\H,X)$ is defined by
 $$
 \begin{array}{rcl}
 H_*^{\emb,X}(X,\H;A)&=&H_*(C_*(X;A)/\inf^{C_*(X;A)}_*(\Z(\H)\otimes A))\\
 &\cong& H_*(C_*(X;A)/\sup^{C_*(X;A)}_*(\Z(\H)\otimes A)).\\
 \end{array}
 $$
\end{definition}

Now we assume that all homology is taken with coefficients in a ground field $\F$. Therefore, we can simplify our notation of homology groups $H_*(\ - \ ; \F)$ to $H_*(\ - \ )$. Let $(\H,X)$ be a super-hypergraph dominated by a directed/undirected (multi-)graph $G$ with a scoring scheme $\mathfrak{M}$. Let $X(t)=\mathfrak{M}^{-1}((-\infty,t])\cap X$ and $\H(t)=\mathfrak{M}^{-1}((-\infty,t])\cap \H$. Then
\begin{equation}\label{super-persistent homology}
\begin{array}{cc}
\mathbb{H}_*(X)=(H_*(X(t))\ | \ t\in\R),&\\
\mathbb{H}_*^{\emb,X}(\H)=(H_*^{\emb,X}(\H(t))\ | \ t\in\R) &\textrm{ and }\\
\mathbb{H}_*^{\emb,X}(X,\H)=(H_*^{\emb,X}(X,\H(t))\ | \ t\in\R)&\\
\end{array}
\end{equation}
are graded persistence modules.
The short exact sequence of chain complexes
\[
\xymatrix{
\inf^{C_*(X(t))}_*(\F(\H(t)))\ar@{^{(}->}[r]^-{j^t}& C_*(X(t))\ar@{->>}^-{p^t}[r]&  C_*(X(t))/\inf^{C_*(X(t))}_*(\F(\H(t)))}
\]
induces a long exact sequence on homology, which can be written as an exact triangle of graded persistence modules
\begin{equation}\label{super-persistent triangle}
\xymatrix{
\mathbb{H}_*^{\emb,X}(\H)\ar@{->}[rr]^-{\mathbb{J}} & &\mathbb{H}_*(X)\ar@{->}[dl]^-{\mathbb{P}}\\
&\mathbb{H}_*^{\emb,X}(X,\H)\ar@{->}[ul]^{\partial}&\\}
\end{equation}
where $\mathbb{J}$ and $\mathbb{P}$ are persistence morphisms and the boundary homomorphism $\partial$ is persistence morphism of degree $-1$.

\begin{definition}
Let $(\H,X)$ be a super-hypergraph dominated by a directed/undirected (multi-)graph $G$ with a scoring scheme. Then the three graded persistence modules listed in~(\ref{super-persistent homology}) together with the exact triangle~(\ref{super-persistent triangle}) give a \textit{super-persistent homology} of $(\H,X)$ with coefficients in a field $\F$.
\end{definition}

Equally, we could call $\mathbb{H}_*^{\emb,X}(\H)$ super-persistent homology. However, the definition given above can carry more information than the embedded homology of the super-hypergraph, as we will now illustrate.

Let $J\subseteq \R$ be an interval. The \textit{interval persistence module} $\F^J=(J(t) \ | \ t\in\R)$ is defined by
$$
J(t)=\left\{
\begin{array}{ll}
\F&\textrm{if } t\in J,\\
0&\textrm{otherwise}\\
\end{array}\right.
$$
with double indexed linear maps
$$
j^t_s=\left\{
\begin{array}{ll}
\mathrm{id}&\textrm{ if } s,t\in J,\\
0&\textrm{ otherwise.}\\
\end{array}\right.
$$
For an ungraded persistence module $\mathbb{V}$, the $q$-th \textit{suspension} $\Sigma^q\mathbb{V}$ is a graded persistence module with
$$
\Sigma^q\mathbb{V}_n=\left\{
\begin{array}{ll}
\mathbb{V}&\textrm{ if } n=q\\
0&\textrm{ otherwise.}\\
\end{array}\right.
$$
A \textit{graded interval persistence module} is a $q$-th suspension of the ungraded interval persistence module for some $q$. 

Recall that a $\Delta$-set $X$ is called of \textit{finite type} if $X_n$ is finite for each $n$.

\begin{theorem}[Structure Theorem]\label{thm:SPHom_structure}
Let $(\H,X)$ be a super-hypergraph dominated by a directed/undirected (multi-)graph $G$ with a scoring scheme. Suppose that $X$ is of finite type. Then the graded persistence modules $\mathbb{H}_*(X)$, $\mathbb{H}_*^{\emb,X}(\H)$ and $\mathbb{H}_*^{\emb,X}(X,\H)$ admit direct sum decompositions in terms of graded interval persistence modules and these decompositions are unique up to the order of factors in the category of graded persistence modules.
\end{theorem}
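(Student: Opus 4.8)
The plan is to reduce the statement to the classical structure theorem for one-parameter persistence modules over a field and then to verify its hypotheses for each of the three modules. Since the persistence maps in homology preserve homological degree, each graded persistence module $\mathbb{H}_*(X)$, $\mathbb{H}_*^{\emb,X}(\H)$ and $\mathbb{H}_*^{\emb,X}(X,\H)$ splits as a direct sum $\bigoplus_{n\geq0}\Sigma^n\mathbb{V}^{(n)}$, where $\mathbb{V}^{(n)}$ is the ungraded persistence module obtained by fixing homological degree $n$. A graded interval module is by definition a suspension $\Sigma^q\F^J$, so it suffices to decompose each ungraded module $\mathbb{V}^{(n)}$ into ordinary interval modules $\F^J$ and then suspend; reassembling over all $n$ yields the desired decomposition into graded interval modules.

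First I would verify that every $\mathbb{V}^{(n)}$ is pointwise finite-dimensional. Because $X$ is of finite type, the sets $X_{n-1}$, $X_n$ and $X_{n+1}$ are finite, so for every $t\in\R$ the chain groups $C_{n-1}(X(t);\F)$, $C_n(X(t);\F)$ and $C_{n+1}(X(t);\F)$ are finite-dimensional. Consequently the ordinary homology $H_n(X(t))$, the embedded homology $H_n^{\emb,X}(\H(t))$ computed from the subcomplex $\inf^{C_*(X(t);\F)}_*(\F(\H(t)))$ (a subgroup of these finite-dimensional chain groups by Proposition~\ref{proposition9.2}), and the relative embedded homology $H_n^{\emb,X}(X,\H(t))$ computed from the quotient complex $C_*(X(t);\F)/\inf^{C_*(X(t);\F)}_*(\F(\H(t)))$ are all finite-dimensional. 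Thus each $\mathbb{V}^{(n)}(t)$ is finite-dimensional for every $t$. That these assignments form genuine persistence modules, that is, that the filtration inclusions $(\H(s),X(s))\hookrightarrow(\H(t),X(t))$ induce compatible degree-preserving linear maps on all three homologies, is exactly the functoriality recorded in Proposition~\ref{proposition3.12} and already noted in~(\ref{super-persistent homology}).

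With pointwise finite-dimensionality established, I would invoke the structure theorem for pointwise finite-dimensional persistence modules indexed by $\R$ over a field (Crawley-Boevey), which asserts that such a module decomposes as a direct sum of interval modules and that this decomposition is unique up to isomorphism and reordering, the intervals being precisely the indecomposables with local endomorphism rings. Applying this to each $\mathbb{V}^{(n)}$ gives $\mathbb{V}^{(n)}\cong\bigoplus_\alpha\F^{J_\alpha^{(n)}}$; suspending and summing over $n$ produces the decomposition of the graded module into the graded interval modules $\Sigma^n\F^{J_\alpha^{(n)}}$. Alternatively, one may observe that the relevant values $\{\mathfrak{M}(\sigma):\sigma\in X_{n-1}\cup X_n\cup X_{n+1}\}$ form a finite set, so in degree $n$ the filtration has only finitely many critical values and $\mathbb{V}^{(n)}$ is tame; it is then determined by its restriction to this finite set of critical values, to which the Zomorodian--Carlsson decomposition applies, giving the same conclusion.

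For uniqueness in the graded category I would argue that graded interval summands in distinct homological degrees are non-isomorphic, since $\Sigma^n\F^J$ is concentrated in degree $n$; hence any graded interval decomposition restricts, degree by degree, to an ungraded interval decomposition of each $\mathbb{V}^{(n)}$, and uniqueness follows from the ungraded uniqueness just cited together with a Krull--Remak--Schmidt argument. The main obstacle is not the decomposition theorem itself, which is quoted, but the bookkeeping needed to confirm pointwise finite-dimensionality of the embedded and relative embedded modules: one must check that the $\inf$ and $\sup$ constructions are compatible with the filtration so that $\mathbb{H}_*^{\emb,X}(\H)$ and $\mathbb{H}_*^{\emb,X}(X,\H)$ are honest persistence modules over $\R$ rather than mere families of vector spaces. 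This compatibility is supplied by Proposition~\ref{proposition3.12}, after which the argument is essentially formal.
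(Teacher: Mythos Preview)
Your proposal is correct and follows essentially the same route as the paper: reduce to one homological degree at a time via the canonical splitting $\mathbb{V}\cong\bigoplus_n\Sigma^n\mathbb{V}_n$, use the finite-type hypothesis on $X$ to get pointwise finite-dimensionality of the relevant (sub- and quotient-) chain groups, and then invoke Crawley-Boevey's structure theorem for pointwise finite-dimensional persistence modules over $\R$. Your write-up is in fact more careful than the paper's own proof, which dispatches the finite-dimensionality and functoriality checks in a single sentence; your explicit appeal to Proposition~\ref{proposition3.12} for the persistence-module structure and your degree-by-degree uniqueness argument fill in details the paper leaves implicit.
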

\begin{proof}
For any graded persistence module $\mathbb{V}$, there is a canonical decomposition
$$
\mathbb{V}\cong \bigoplus_{n\in\Z}\Sigma^n\mathbb{V}_n
$$
in the category of graded persistence modules, where $\mathbb{V}_n$ is considered as an ungraded persistence module. It suffices to show that $\mathbb{H}_n(X)$, $\mathbb{H}_n^{\emb,X}(\H)$ and $\mathbb{H}_n^{\emb,X}(X,\H)$ admit unique factorization as ungraded persistence modules for each $n$.

Since $X$ is of finite type, the chain complex $C_*(X)$ is of finite type and so is any subcomplex or quotient complex. The assertion follows from the structure theorem on persistence modules~\cite[Theorem 1.1]{Crawley-Beovey} derived from the classical Gabriel Theorem in representation theory~\cite{Gabriel}.
\end{proof}

We now briefly summarise persistence diagrams/persistent barcodes; for which this structure theorem is prominent in the calculations, for more details see~\cite{book-persistence-module}.

Let $\mathbb{V}$ be an ungraded persistence module with a unique decomposition (up to the order of factors)
$$
\mathbb{V}=\bigoplus_{\alpha\in I}\F^{J_{\alpha}}
$$
in terms of interval persistence modules, where $I$ is the index set. Then the multi-set given by $(\inf(J_{\alpha}),\sup(J_{\alpha}))\subset \R^2$, $\alpha\in I$, is the \textit{persistence diagram} (or \textit{barcode}) of $\mathbb{V}$, denoted by $\mathrm{dgm}(\mathbb{V})$. In our case, under the hypothesis that $X$ is of finite type, we have three persistence diagrams.
\begin{corollary}
Let $(\H,X)$ be a super-hypergraph dominated by a directed/undirected (multi-)graph $G$ with a scoring scheme and let $X$ be of finite type. Then there are three multi-layer persistence diagrams
$\mathrm{dgm}(\mathbb{H}_n(X))$, $\mathrm{dgm}(\mathbb{H}_n^{\emb,X}(\H))$ and $\mathrm{dgm}(\mathbb{H}_n^{\emb,X}(X,\H))$ for $n\geq0$. \hfill $\Box$
\end{corollary}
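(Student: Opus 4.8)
The plan is to read off the three persistence diagrams directly from the Structure Theorem (Theorem~\ref{thm:SPHom_structure}) together with the definition of the persistence diagram of an ungraded persistence module recalled just above the statement. This is a formal consequence, so the emphasis is on tracking where the finite-type hypothesis enters.

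First I would apply Theorem~\ref{thm:SPHom_structure} to each of the three graded persistence modules $\mathbb{H}_*(X)$, $\mathbb{H}_*^{\emb,X}(\H)$ and $\mathbb{H}_*^{\emb,X}(X,\H)$. Since $X$ is of finite type, the theorem guarantees that each admits a direct sum decomposition into graded interval persistence modules, unique up to the order of the factors. Next I would invoke the canonical splitting $\mathbb{V}\cong\bigoplus_{n\in\Z}\Sigma^n\mathbb{V}_n$ used in the proof of that theorem. Applying it to each of the three modules isolates, for every $n\geq0$, the ungraded persistence modules $\mathbb{H}_n(X)$, $\mathbb{H}_n^{\emb,X}(\H)$ and $\mathbb{H}_n^{\emb,X}(X,\H)$.

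Then I would observe that, because a graded interval persistence module is by definition a suspension of an ungraded interval persistence module, the unique graded-interval decomposition restricts in each degree $n$ to a decomposition of these ungraded modules into ordinary interval persistence modules, again unique up to order. For each such module, written $\bigoplus_{\alpha\in I}\F^{J_\alpha}$, the definition recalled above assigns the multi-set $\{(\inf(J_\alpha),\sup(J_\alpha))\mid\alpha\in I\}\subset\R^2$, which is precisely its persistence diagram. Running this over all $n\geq0$ yields the three families $\mathrm{dgm}(\mathbb{H}_n(X))$, $\mathrm{dgm}(\mathbb{H}_n^{\emb,X}(\H))$ and $\mathrm{dgm}(\mathbb{H}_n^{\emb,X}(X,\H))$, i.e.\ the three multi-layer diagrams claimed.

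The only point requiring care, and the sole place where the finite-type hypothesis is genuinely used, is the uniqueness of the interval decomposition: without it the multi-set of intervals would not be canonically attached to the module, and $\mathrm{dgm}$ would not be well defined. This uniqueness is exactly what Theorem~\ref{thm:SPHom_structure} supplies (via the Gabriel/Crawley-Boevey structure theorem), so there is no substantive obstacle here beyond correctly invoking it; the corollary is a direct packaging of the structure theorem with the definition of a persistence diagram.
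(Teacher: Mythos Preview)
Your proposal is correct and matches the paper's approach: the paper gives no proof at all (the $\Box$ immediately follows the statement), treating the corollary as an immediate consequence of the Structure Theorem together with the definition of $\mathrm{dgm}(\mathbb{V})$ stated just before it. Your write-up simply makes explicit the unpacking that the paper leaves to the reader.
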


The exact triangle~(\ref{super-persistent triangle}) yields matrix data on the correlations of the multi-layer persistence diagrams as follows. Let 
$$\Phi\colon \mathbb{V}\longrightarrow \mathbb{W}$$
be a persistence morphism between ungraded persistence modules. Suppose that both $\mathbb{V}$ and $\mathbb{W}$ satisfy the unique factorization property with respect to decompositions in terms of interval persistence modules and let
$$
\mathbb{V}=\bigoplus_{\alpha\in I_V}\F^{J_{\alpha}}\ \textrm{ and } \mathbb{W}=\bigoplus_{\beta\in I_W}\F^{J_{\beta}}.
$$
Let $\Phi_{\alpha,\beta}$ be the composite
$$
\xymatrix{
\Phi_{\alpha,\beta}\colon \F^{J_{\alpha}}\ar@{->}[rr]^-{\Phi|_{\F^{J_{\alpha}}}}&&\mathbb{W}\ar@{->>}[rr]^-{\mathrm{proj}}&&\F^{J_{\beta}}\\}.
$$
According to ~\cite[Proposition 16, Lemma 22] {Bubenik}, the set of persistence morphisms between two interval persistence modules is either a 1-dimensional vector space or $0$. Thus $\Phi_{\alpha,\beta}\colon \F^{J_{\alpha}}\to \F^{J_{\beta}}$ is either a generator for $\Hom(\F^{J_{\alpha}},\F^{J_{\beta}})$, or zero. Let the index sets $I_V$ and $I_W$ be totally ordered and define the \textit{correlation matrix}
$$M(\Phi)=(m_{\alpha,\beta})_{\alpha\in I_V,\beta\in I_W}$$ of $\Phi$ by setting
$$
m_{\alpha,\beta}=\left\{
\begin{array}{ll}
1&\textrm{ if } \Phi_{\alpha,\beta}\not=0,\\
0&\textrm{ otherwise.}\\
\end{array}\right.
$$
The correlation matrix is an analogue of adjacency matrix of graphs, which gives correlations between $\mathrm{dgm}(\mathbb{V})$ and $\mathrm{dgm}(\mathbb{W})$ by adding directed edges. In summary, we have the following information data from super-persistent homology.

\begin{proposition}
Let $(\H,X)$ be a super-hypergraph dominated by a directed/undirected (multi-)graph $G$ with a scoring scheme. Suppose that $X$ is of finite type. Then there are three multi-layer persistence diagrams $\mathrm{dgm}(\mathbb{H}_*(X))$, $\mathrm{dgm}(\mathbb{H}_*^{\emb,X}(\H))$ and $\mathrm{dgm}(\mathbb{H}_*^{\emb,X}(X,\H))$ together with three correlation matrices $M(\mathbb{J})$, $M(\mathbb{P})$ and $M(\partial)$ between them.\hfill $\Box$
\end{proposition}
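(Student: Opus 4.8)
The plan is to read this proposition as a direct assembly of two ingredients already established: the Structure Theorem (Theorem~\ref{thm:SPHom_structure}) for the existence of the diagrams, and the exact triangle~(\ref{super-persistent triangle}) together with the correlation-matrix construction preceding the statement for the matrices. No genuinely new content needs to be proved; the task is to check that the hypotheses of those constructions are met and to keep track of the homological grading.

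First I would produce the three persistence diagrams. Since $X$ is of finite type, Theorem~\ref{thm:SPHom_structure} applies to each of $\mathbb{H}_*(X)$, $\mathbb{H}_*^{\emb,X}(\H)$ and $\mathbb{H}_*^{\emb,X}(X,\H)$, giving a decomposition into graded interval persistence modules that is unique up to the order of factors. Recording, for each interval summand $\F^{J_\alpha}$, the pair $(\inf(J_\alpha),\sup(J_\alpha))\in\R^2$ yields the three multisets $\mathrm{dgm}(\mathbb{H}_*(X))$, $\mathrm{dgm}(\mathbb{H}_*^{\emb,X}(\H))$ and $\mathrm{dgm}(\mathbb{H}_*^{\emb,X}(X,\H))$. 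By the canonical splitting $\mathbb{V}\cong\bigoplus_{n}\Sigma^n\mathbb{V}_n$ used in the proof of the Structure Theorem, each diagram is naturally graded by the homological degree $n$.

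Next I would build the correlation matrices from the exact triangle~(\ref{super-persistent triangle}), which supplies the persistence morphisms $\mathbb{J}$ and $\mathbb{P}$ of degree $0$ and the boundary $\partial$ of degree $-1$. Working one homological degree at a time, each homogeneous piece $\mathbb{H}_n(X)$, $\mathbb{H}_n^{\emb,X}(\H)$, $\mathbb{H}_n^{\emb,X}(X,\H)$ is an ungraded, finite-type persistence module, hence enjoys the unique interval-factorization property required by the construction preceding the proposition. For $\mathbb{J}$ and $\mathbb{P}$ the degree-$0$ component carries the degree-$n$ piece to the degree-$n$ piece, so the recipe applies verbatim: by~\cite[Proposition 16, Lemma 22]{Bubenik} each component $\Phi_{\alpha,\beta}$ is either a generator of $\Hom(\F^{J_\alpha},\F^{J_\beta})$ or zero, giving entries $m_{\alpha,\beta}\in\{0,1\}$, and assembling these across $n$ produces $M(\mathbb{J})$ and $M(\mathbb{P})$.

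The only point requiring care, and the place I would expect to have to be explicit, is the boundary morphism $\partial$, which has degree $-1$: its degree-$n$ component is a persistence morphism $\mathbb{H}_n^{\emb,X}(X,\H)\to\mathbb{H}_{n-1}^{\emb,X}(\H)$ between ungraded modules living in different homological degrees. Here I would apply the correlation-matrix construction to this shifted pair, indexing the source by the interval summands of $\mathbb{H}_n^{\emb,X}(X,\H)$ and the target by those of $\mathbb{H}_{n-1}^{\emb,X}(\H)$, and then collect the resulting blocks over all $n$ to form $M(\partial)$. Since the Bubenik dichotomy depends only on the source and target being interval modules and not on any grading, this shift introduces no difficulty beyond the bookkeeping of matching degree $n$ to degree $n-1$. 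This completes the assembly and hence establishes the proposition.
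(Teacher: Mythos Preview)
Your proposal is correct and matches the paper's approach exactly: the paper itself offers no proof beyond the $\Box$, treating the proposition as an immediate summary of the Structure Theorem, the exact triangle~(\ref{super-persistent triangle}), and the correlation-matrix construction that directly precedes it. If anything, you have supplied more detail than the paper by spelling out the degree-shift bookkeeping for $\partial$, which the paper leaves implicit.
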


An important point is that we allow $\H$ to be an arbitrary graded subset of $X$. If we fix $X$ to be a $\Delta$-set dominated by a graph $G$ with a scoring scheme and allow $\H$ to be random, then $\mathrm{dgm}(\mathbb{H}_*(X))$ is a deterministic barcode, while $\mathrm{dgm}(\mathbb{H}_*^{\emb,X}(\H))$ and $\mathrm{dgm}(\mathbb{H}_*^{\emb,X}(X,\H))$ are random. The correlation matrices may help further analyse the data.

Let $X$ be a fixed $\Delta$-set dominated by a directed/undirected (multi-)graph. It would be also interesting to consider the set $\mathbb{P}(X)$ of the isomorphic classes of persistence modules~(\ref{super-persistent homology}) for all graded subsets $\H$ of $X$. The inclusions $\H'\subseteq \H\subseteq X$ induce a morphism of super-hypergraphs $(\H',X)\to (\H,X)$. By taking super-persistent homology, one would get a quiver structure on the set $\mathbb{P}(X)$. Moreover, there is an \textit{interleaving distance} between persistence modules introduced in~\cite{Chazal2009} corresponding to \textit{bottleneck distance}~\cite{book-persistence-module}, which gives the structure of a metric space on the quiver $\mathbb{P}(X)$. The following example illustrates that $\mathbb{P}(X)$ may give more robust information.

\begin{example}
Let $X$ be a $\Delta$-set. Let $\H$ be a graded subset of $X$ consisting of \textit{non-face} elements $\sigma\in X$. Here a non-face element $\sigma$ means that there does not exist any element $\tau\in X$ such that $d_i\tau=\sigma$ for some $i$. In other words, $\H$ is given by removing all face elements in $X$. It is straightforward to see that
$$
H_*^{\emb,X}(\H)=\F(\H)\cap \Z(C_*(X)).
$$
Therefore, the embedded homology can detect the cycles contributed from non-face elements. From the exact triangle~(\ref{super-persistent triangle}), a part of the boundaries in $C_*(X)$ can also be detected. These detected elements would contribute to bar-codes through persistence.  Hence, in addition to the persistence on the homology $H_*(X)$, $\mathbb{P}(X)$ can decode more topological features.
\end{example}

\subsection{The Ordinary Persistent Homology}\label{ordinary persistent homology}
The classical persistent homology refers to the persistent homology of point cloud data using the Vietoris-Rips complex, the \v{C}ech complex or the witness complex. Persistent homology has been used as an important topological tool in data science. In this subsection, we rewrite on the classical persistent homology from the viewpoint of graphs with scoring schemes, and then give a natural generalization to persistent homology for graphs with reference maps to metric spaces.

\subsubsection{The Classical Persistent Homology}
A \textit{point cloud} dataset is a finite set $\mathcal{L}$ with a reference map that embeds $\mathcal{L}$ into a finite dimensional Euclidean space $\R^m$, thus we can consider $\mathcal{L}$ as a finite subset in $\R^m$. We now use a graph with a scoring scheme to describe the (persistent) Vietoris-Rips complex, \v{C}ech complex and witness complex in a unified way. The graph $G(\mathcal{L})$ is the complete graph having $\mathcal{L}$ as its vertex set. Intuitively, we assign one and only one edge to any two distinct points in $\mathcal{L}$. The main point is to show that different scoring schemes on $G(\mathcal{L})$ can obtain different persistent complexes that are currently widely used in TDA~\cite{Carlsson}. Below we give the scoring schemes for the Vietoris-Rips complex, the \v{C}ech complex, the strong witness complex and the weak witness complex. Let $\Lambda=\{l_0,l_1,\ldots,l_n\}\subseteq \mathcal{L}$ be a subset of $\mathcal{L}$.

For $x\in \R^m$, denote by $B(x,r)$ the closed ball of radius $r$ centered at $x$.
Define the following scoreing schemes on $\Lambda$:
\begin{enumerate}
\item[1)] The \textit{Vietoris-Rips scoring} is given by
\begin{equation}\label{Vietoris-Rips scoring}
\mathfrak{M}^{VR}(\Lambda)=\frac{1}{2}\sup\{d(l_i,l_j) \ | \ l_i,l_j\in\Lambda\subseteq \R^m\}.
\end{equation}
The balls $B(l_0,r),\ldots,B(l_n,r)$ pairwise intersect if and only if the score $\mathfrak{M}^{VR}(\Lambda)\leq r$.

\item[2)] The \textit{\v{C}ech scoring} is defined by
\begin{equation}\label{Cech scoring} 
\mathfrak{M}^{\check{C}}(\Lambda)=\inf_{x\in\R^m}\max\{d(x,l)\ | \ l\in \Lambda\subseteq \R^m\}.
\end{equation}
Note that
$$
\bigcap_{i=0}^n B(l_i,r)\not=\emptyset
$$
if and only if the score $\mathfrak{M}^{\check{C}}(\Lambda)\leq r$.

\item[3)] The \textit{Strong witness scoring} is defined by
\begin{equation}\label{Strong witness scoring}
\mathfrak{M}^{W^s}(\Lambda)=\inf_{x\in\R^m}\{\sup_{y\in\Lambda}d(x,y)-\inf_{z\in \mathcal{L}}d(x,z)\}.
\end{equation}
\item[4)] The \textit{Vietoris-Rips Strong witness scoring} is given by
\begin{equation}\label{Vietoris-Rips Strong witness scoring}
\mathfrak{M}^{W^s_{VR}}(\Lambda)=\sup_{0\leq i<j\leq n}\{\inf_{x\in\R^m}\{\max\{d(x,l_i),d(x,l_j)\}-\inf_{z\in \mathcal{L}}d(x,z)\}\}.
\end{equation}
\item[5)] The \textit{Weak witness scoring} is given by
\begin{equation}\label{Weak witness scoring}
\mathfrak{M}^{W^w}(\Lambda)=\inf_{x\in\R^m}\{\sup_{y\in\Lambda}d(x,y)-\inf_{z\in \mathcal{L}\smallsetminus \Lambda}d(x,z)\}.
\end{equation}
\item[6)] The \textit{Vietoris-Rips weak witness scoring} is given by
\begin{equation}\label{Vietoris-Rips weak witness scoring}
\mathfrak{M}^{W^w_{VR}}(\Lambda)=\sup_{0\leq i<j\leq n}\{\inf_{x\in\R^m}\{\max\{d(x,l_i),d(x,l_j)\}-\inf_{z\in \mathcal{L}\smallsetminus\Lambda}d(x,z)\}\}.
\end{equation}
\end{enumerate}

Let $X$ be the clique complex of $G(\mathcal{L})$ considered as a $\Delta$-set. In other words, because  $G$ is complete, $X$ is the set of all nonempty full subgraphs of $G(\mathcal{L})$. Here a \textit{full subgraph} $H$ of $G$ is a subgraph $H$ such that the edge set between any two vertices $v$ and $w$ in $H$ is equal to the edge set between $v$ and $w$ in $G$. Choose a linear order on the vertex set $V(G(\mathcal{L}))$. Then $(X,X)$ is a super-hypergraph. It is straightforward to check that the persistent super-hypergraph filtrations on $(X,X)$ induced by the above scoring schemes coincide with the classical persistent filtrations in~\cite{Carlsson}. Here the witness scoring schemes defined in (3)-(6) are reformulations from~\cite[Definition~2.7, Definition 2.8]{Carlsson}.

\subsubsection{Clique Persistent Homology of Graphs with Reference Maps}

Next, we will consider a canonical extension of ordinary persistent homology to the case of graphs with reference maps on vertices. Let $G$ be a finite undirected (multi-)graph with a reference map that embeds the vertex set $V(G)$ into a finite dimensional Euclidean space $\R^m$ and let $H$ be any subgraph. Then, any one of the six scoring schemes in~(\ref{Vietoris-Rips scoring})-(\ref{Vietoris-Rips weak witness scoring}) induces a persistent filtration on the clique complex $\mathrm{clique(G)}$. This gives the \textit{clique persistent homology} on $G$.

The clique persistent homology on $G$ could, in general, be quite different from the ordinary persistent homology of the vertex set of $G$ under the reference map. For instance, if the clique complex $\mathrm{clique}(G)$ has non-trivial reduced homology, the resulting clique persistent homology converges to $H_*(\mathrm{clique}(G))$ as $t\to\infty$, but the ordinary persistent homology converges to trivial homology as $t\to\infty$. The ordinary persistent homology of $V(G)$ under the reference map is obtained by rebuilding a new graph given by the complete graph on $V(G)$, that is, all edges in $G$ are forgotten and the new edges are added in depending on the scoring schemes that are obtained through the reference map. On the other hand, when we consider the clique complex of $G$ itself, the edges in $G$ are accounted for.

The following example illustrates how clique persistent homology can describe shapes and therefore could be useful for data analysis on protein structures or image processing on 3D objects with complicated internal structures such as the heart.

\begin{example}
Let $X$ be a polyhedron in $\R^m$. Let $K$ be a simplicial complex that is a triangulation of $X$ and let $G$ be the graph given by the $1$-skeleton of the barycentric subdivision of $K$. Let the reference map on $G$ be given by the inclusion of $G$ in $\R^m$. Then the geometric realization of the clique complex $\mathrm{clique}(G)$ is homeomorphic to $X$, and the persistent homology of $\mathrm{clique}(G)$ converges to $H_*(X)$. In particular, the number of infinite persistence modules in the $n^\text{th}$ persistent homology of $\mathrm{clique}(G)$ is equal to the $n^\text{th}$ Betti number of $X$. Thus the clique persistent homology detects the topological shape of $X$.
\end{example}

More generally we can remove the embedding hypothesis of reference maps. Let $G$ be a finite undirected (multi-)graph and let
$$
f\colon V(G)\longrightarrow \R^m
$$
be a function (without assuming injectivity). We can use \textit{pull-back scoring} in the following sense. Let $H$ be any subgraph of $G$. Then the image $f(V(H))$ is a finite subset located in $\R^m$. Let $\mathfrak{M}$ be a scoring scheme on point cloud data such as one of the six aforementioned scoring schemes. Define
\begin{equation}
\begin{array}{lr}
\mathfrak{M}^f(H)=\mathfrak{M}(f(V(H)))\qquad &\textrm{(Pull-back Scoring)}\\
\end{array}
\end{equation}
which induces a persistent filtration on the clique complex $\mathrm{clique}(G)$ depending on the reference map $f$. Different choices of $f$ would result in different persistence diagrams. For instance, a constant function does induce a trivial persistence on $H_*(\mathrm{clique}(G))$. The flexibility of $f$ could be useful. For example, if $f$ is randomly given, it induces corresponding random persistence diagram.

The following example illustrates that a pull-back scoring on clique persistent homology may be useful for detecting higher dimensional geometric shapes.
\begin{example}
Let $p\colon E\to B$ be a continuous map between polyhedra $E$ and $B$. Assume that $B$ is a subspace of $\R^m$. By triangulating $B$, we pull it back along $p$ do define $K^E$ and there is a simplicial map $p'\colon K^E\longrightarrow K^B$ such that
\begin{enumerate}
\item[1)] $K^E$ and $K^B$ are simplicial complexes of triangulations of $E$ and $B$, respectively.
\item[2)] Let $G(K^E)$ and $G(K^B)$ be the graphs given by the $1$-skeleton of $K^E$ and $K^B$, respectively. Then $\mathrm{clique}(G(K^E))=K^E$ and $\mathrm{clique}(G(K^B))=K^B$.
\item[3)] $p'$ is a simplicial approximation to $p$.
\end{enumerate}
Let the reference map $R$ on $V(G(K^B))$ be given by the inclusion $V(G(K^B))\subseteq B\subseteq \R^m$. Let us take the pull-back scoring $\mathfrak{M}$ on $G(K^E))$. Then the persistent filtration on $\mathrm{clique}(G(K^E))$ induced by $\mathfrak{M}$ is the pull-back of the persistent filtration on $\mathrm{Clique}(G(K^B))$ induced by $R$. In particular, if $p\colon E\to B$ is a fibre bundle or, more generally, a fibration with fibre $F$, then we have a persistent Leray-Serre spectral sequence convergent to $H_*(E)$. It is well-known in algebraic topology that Leray-Serre spectral sequences are an important tool for computing $H_*(E)$ starting with $H_*(B)$ and $H_*(F)$.
\end{example}

\subsection{Partition Homology and Persistent Partition Homology}\label{clustering}
The methods of data science are typically aimed at finding structures and patterns within large datasets. Being able to glean information about the internal structures of graphical data would be useful in solving the typical problems given to machine learning algorithms. For example, classification problems, prediction and, in particular, partitioning data into clusters ~\cite[Section 1.1.3]{RRC}.

If a collection of subgraphs forms a $\Delta$-set structure, then we can calculate homology. A natural question is how to introduce a $\Delta$-set structure on a collection of subgraphs in some natural way. More precisely, how to define face operations on subgraphs. We are going to show that any clustering on the vertex set can induce canonical face operations on subgraphs. For a dataset given by a graph, the topological features on collections of subgraphs under the face operations induced by a clustering may help for detecting correlations between the clusters.

Let $G$ be a directed/undirected (multi-)graph. Assume that there is a disjoint clustering $\bfp$ on the vertex set $V(G)$. In other words, there is a disjoint union
$$
V(G)=\coprod_{i=0}^m V_i(G)
$$
under the clustering $\bfp$, where each $V_i(G)$ is a cluster. Let $H$ be a subgraph of $G$. Then there exists a unique sequence $(k_0,k_1,\ldots,k_n)$ with $0\leq k_0<k_1<\ldots<k_n\leq m$ such that $V(H)\cap V_i(G)\not=\emptyset$ for $i\in\{k_0,k_1,\ldots,k_n\}$ and $V(H)\cap V_i(G)=\emptyset$ if $i\not\in\{k_0,k_1,\ldots,k_n\}$. We call $H$ a subgraph of $G$ \textit{linked to $(n+1)$ clusters}. Viewing $H$ as an abstract $n$-simplex, we define the $j^{\text{th}}$-face map, $d_j^\bfp(H)$, as the full subgraph of $H$ formed after removing all of those vertices $v\in V(H)\cap V_{k_j}(G)$ together with the edges incident  to (or from) such $v$. The resulting subgraph $d_j^\bfp(H)$ is linked to $n$ clusters with $V(d_j^\bfp(H))\cap V_{k_j}(G)=\emptyset$. It is straightforward to show that the $\Delta$-identity
$$
d_i^\bfp d_j^\bfp=d_j^\bfp d^\bfp_{i+1}
$$
for $i\geq j$ holds. The face operation $d_j^\bfp$ is \textit{induced by the disjoint clustering $\bfp$}.

Now let $\H$ be any collection of subgraphs of $G$. Define $\H_n$ to be the subset of $\H$ consisting of those subgraphs in $\H$ linked to $(n+1)$ clusters. This gives a graded structure on $\H=\{\H_n\}_{n\geq0}$. Let $X(\H)$ be the collection of subgraphs of $G$ given by all of the elements in $\H$ together with all iterated faces. Then $X(\H)$ is a $\Delta$-set and $(\H,X(\H))$ is a super-hypergraph. The resulting homology groups
$$
H_*(X), H_*^{\emb,X}(\H), H_*^{\emb,X}(X,\H)
$$
are called the \textit{partition homology}.

In general, $X(\H)$ may not be a simplicial complex. In simplicial complexes, the assumption that simplices are determined by their vertices is too strict with applications in mind. For exploring topological structures arising from disjoint clusterings, a $\Delta$-set is a more suitable notion.

In practice, for studying possible correlations between clusters, one could start with a collection of one or more subgraphs, $\H$, linked with some or all clusters, and then produce the $\Delta$-set $X(\H)$. Due to the nature of simplicial homology, higher dimensional homology of $X$, and higher dimensional embedded homology of $(\H,X)$ would give topological features measuring the group correlations between more clusters. In particular, we have set up the $\Delta$-set $X(\H)$ such that the homological dimension of a given subgraph $H$ in $X$ is $n-1$, where $n$ is the number of clusters linked with $H$.

In theory, we can start with any collection of subgraphs as the initial data $\H$. For instance, we can start with $\H$ given by all or some of the $k$-regular subgraphs of $G$, Eulerian subgraphs, traceable subgraphs or Hamitonian subgraphs as initial data for constructing the $\Delta$-set $X(\H)$. This would give different topological approaches to understanding the internal structures of $\H$ in addition to what we have discussed previously in Section~\ref{sec:topstructures}.

If there is a scoring scheme on $G$, then we can calculate the super-persistent homology of $(\H,X)$ called \textit{persistent partition homology}. A scoring scheme on $G$ can be deterministically or randomly given. If a scoring scheme is randomly given, it may not be regular. This means that in the induced persistent filtration on $X$, the graded subset $X(t)$ may not be a $\Delta$-subset of $X$ for all $t$. In this case, the persistence system can be modified replacing chains related to the terms $X(t)$ by infimum or supremum chains on $X(t)$. The resulting persistence modules and persistence diagrams can be modified accordingly.

From the  perspective of data processing,  a clustering may be compared against certain optimization properties. Currently, we use discrete Morse theory which works well on simplicial complexes and cell complexes~\cite{Forman}, and chain complexes~\cite{Kozlov2005} with applications in data analysis~\cite{Reininghaus}. Moreover, the combinatorial Laplacian operator works well on simplicial complexes and chain complexes, where cohomology with coefficients in real numbers can be expressed as the null space of the Laplacian operator on cochains~\cite{HorakandJost}.

\subsection{Other Face Operations}
We have shown how disjoint clusterings can induce face operations on subgraphs. This construction works well as a theory which unifies many constructions such as the clique complex and the neighborhood complex. However, if we are interested in subgraphs having some special properties, this construction has some disadvantages. For instance, if we are interested in collections of finite connected subgraphs $H$, then the subgraph of $H$ given by removing some of its vertices may not be connected. In the following subsection, we will discuss some alternative ways of getting natural constructions of face operations.

\subsubsection{Link-blowup Face Operations}

The idea of link-blowup face operations comes from geometric constructions on tubular neighborhoods of submanifolds or regular neighborhoods of subcomplexes. Let $H$ be a subgraph of some graph $G$ and let $S$ be a subset of the vertex set $V(H)$. If we remove $S$ from $H$, then we could add some edges from the working graph $G$ to make a blowup for the subgraph $H\smallsetminus S$. A natural way to add these edges is to consider the neighbors of vertices of $S$ in $H$ and to add the edges between these neighbors from the working graph $G$.

Let $G$ be a directed/undirected (multi-)graph and let $E^G(v,w)$ denote the edge set between $v$ and $w$ for vertices $v,w\in V(G)$. Let $S\subseteq V(G)$ be a subset and define the \textit{induced subgraph of $S$ in $G$}, $G[S]$, to be the full subgraph of $G$ having $S$ as its vertex set. The \textit{closed neighborhood set} $N[S]$ is defined by
$$
N[S]=S\cup \{u\ | \ u\in V(G) \textrm{ adjacent to a vertex } v\in S\}
$$
namely, $N[S]$ is the union of the neighborhoods of vertices $v\in S$. The \textit{link set}  of $S$ in $G$ is
$$
\lk(S)=N[S]\smallsetminus S.
$$

Let $H$ be a subgraph of $G$ and let $S\subseteq V(H)$ be a subset of the vertex set of $H$. The \textit{link-blowup}\footnote{This definition is taken from a geometric setting. We may consider the subgraph $G[N[S]]$ as a regular neighborhood of $S$. Then $\mathrm{Lk}(S)$ are the vertices located in the ``\textit{boundary}'' of the regular neighborhood. Geometrically, we add all of edges in $G$ joining vertices in $\mathrm{Lk}(S)\cap V(H)$ to form a blowup on $H[V(H)\smallsetminus S]$. } of $H$ along $S$ is defined as
$$
H[V(H)\smallsetminus S]\cup G[\lk(S)\cap V(H)].
$$
Note that the graph $H[V(H)\smallsetminus S]\cup G[\lk(S)\cap V(H)]$ has the same vertex set of $H[V(H)\smallsetminus S]$.

Now suppose that there is a disjoint clustering $\bfp$ on the vertex set $V(G)$ so that there is a disjoint union
$$
V(G)=\coprod_{i=0}^m V_i(G)
$$
under $\bfp$ with each $V_i(G)$ a cluster. Let $H$ be a subgraph. Then there exists a unique sequence $(k_0,k_1,\ldots,k_n)$ with $0\leq k_0<k_1<\ldots<k_n\leq m$ such that $V(H)\cap V_i(G)\not=\emptyset$ for $i\in\{k_0,k_1,\ldots,k_n\}$ and $V(H)\cap V_i(G)=\emptyset$ if $i\not\in\{k_0,k_1,\ldots,k_n\}$. Let $V_j(H)=V(H)\cap V_{k_j}(G)$. Define the \textit{link-blowup face operation} as
\begin{equation}\label{link-blowup face}
d_j^{\mathrm{lk}}(H)=H[V(H)\smallsetminus V_j(H)]\cup G[\lk(V_j(H))\cap V(H)]
\end{equation}
that is, the link-blowup of $H$ along $V_j(H)=V(H)\cap V_{k_j}(G)$ for $0\leq j\leq n$.

\begin{proposition}\label{link-blowup face identity}
Given a disjoint clustering $\bfp$, let $d_j^{\slk}$ be defined as above. Then
\begin{equation}
d_i^{\mathrm{lk}}d_j^{\mathrm{lk}}=d_j^{\mathrm{lk}}d_{i+1}^{\mathrm{lk}}
\end{equation}
for $i\geq j$.
\end{proposition}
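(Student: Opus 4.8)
The plan is to reduce the $\Delta$-identity to the commutativity of two elementary cluster-deletions, exploiting the crucial feature that the link set $\lk(S)$ is computed in the fixed ambient graph $G$, so it depends only on $S$ and not on any intermediate subgraph. First I would fix notation: let $H$ be linked to the clusters $k_0<k_1<\cdots<k_n$, write $W_l=V(H)\cap V_{k_l}(G)$ for $0\le l\le n$, so that $V(H)=\coprod_{l=0}^n W_l$, and abbreviate the link-blowup deletion of a subset $S\subseteq V(K)$ from a subgraph $K$ of $G$ by
$$\beta_S(K)=K[V(K)\smallsetminus S]\cup G[\lk(S)\cap V(K)],$$
so that $d_j^{\mathrm{lk}}(H)=\beta_{W_j}(H)$. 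The key observation is that $\lk(S)$ is a fixed subset of $V(G)$ determined by $S$ alone, since $N[S]$ is taken with respect to $G$ rather than with respect to $K$.

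Next I would carry out the index bookkeeping. After applying $d_j^{\mathrm{lk}}$, the subgraph $\beta_{W_j}(H)$ is linked to the clusters $k_0,\ldots,\widehat{k_j},\ldots,k_n$; in its own grading the cluster at position $i$ (for $i\ge j$) is the original $k_{i+1}$, and its vertex set is still $W_{i+1}$ because $W_{i+1}\cap W_j=\emptyset$ (distinct clusters are disjoint). Hence $d_i^{\mathrm{lk}}d_j^{\mathrm{lk}}(H)=\beta_{W_{i+1}}(\beta_{W_j}(H))$ for $i\ge j$. Symmetrically, since $j\le i<i+1$, after applying $d_{i+1}^{\mathrm{lk}}$ the cluster at position $j$ of $\beta_{W_{i+1}}(H)$ is still the original $k_j$ with vertex set $W_j$, so $d_j^{\mathrm{lk}}d_{i+1}^{\mathrm{lk}}(H)=\beta_{W_j}(\beta_{W_{i+1}}(H))$. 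Thus the identity reduces to $\beta_T\beta_S(H)=\beta_S\beta_T(H)$ with $S=W_j$ and $T=W_{i+1}$ two disjoint subsets of $V(H)$.

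Finally I would prove this commutativity by computing both sides directly. Both $\beta_T\beta_S(H)$ and $\beta_S\beta_T(H)$ have vertex set $V(H)\smallsetminus(S\cup T)$, since $\lk(S)\cap V(H)\subseteq V(H)\smallsetminus S$ and likewise for $T$. For the edges, I would expand $E(\beta_T\beta_S(H))$ as the union of three pieces: the $H$-edges with both endpoints in $V(H)\smallsetminus(S\cup T)$; the $G$-edges with both endpoints in $(\lk(S)\cap V(H))\smallsetminus T$; and the $G$-edges with both endpoints in $(\lk(T)\cap V(H))\smallsetminus S$. The first piece comes from restricting the $H$-part of $\beta_S$, the second from restricting its $\lk(S)$-part (using that $\lk(S)$ is unchanged), and the third from the outer blowup along $\lk(T)$. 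This description is manifestly symmetric under interchanging $S$ and $T$, so $E(\beta_T\beta_S(H))=E(\beta_S\beta_T(H))$, and the identity follows.

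The main obstacle is not the algebra but the two bookkeeping points that make it go through: that distinct clusters are disjoint, so the ``other'' deleted vertex set is untouched (yielding the index shift $i\mapsto i+1$), and that $\lk$ is evaluated in $G$ rather than in the intermediate subgraph, which is exactly what forces the two blowup-edge additions to commute. I would treat the directed case identically, since the computation uses only that $H$ is a subgraph of $G$ and that $\lk$ depends only on $S$.
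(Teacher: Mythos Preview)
Your proof is correct and follows essentially the same approach as the paper: after the identical index bookkeeping, both you and the paper compute $d_i^{\mathrm{lk}}d_j^{\mathrm{lk}}(H)$ explicitly as $H[W]\cup G[\lk(V_j)\cap W]\cup G[\lk(V_{i+1})\cap W]$ with $W=V(H)\smallsetminus V_j\smallsetminus V_{i+1}$ (your pieces (1)--(3) are exactly these three terms, since $\lk(S)\cap S=\emptyset$), and observe the symmetry in $V_j,V_{i+1}$. Your explicit emphasis that $\lk$ is computed in the fixed ambient graph $G$ is the key point that makes the argument work, and the paper uses this implicitly in the same way.
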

\begin{proof}
Let $H$ be a subgraph of $G$. Let $V_j$ denote $V_j(H)$ defined as above and let $d_j$ denote $d_j^\bfp$, the induced face operations from the disjoint clustering $\bfp$. We will use the fact that
$$
V(d^\slk_l(H))=V(d_l(H))=V(H)\smallsetminus V_l.
$$
Now
$$
\begin{array}{rcl}
d_i^\slk(d_j^{\slk}(H))&=&d_j^{\slk}(H)[V(d^\slk_j(H))\smallsetminus V_{i+1}]\cup G[\lk(V_{i+1})\cap V(d_j^\slk(H))]\\
&=&d_j^{\slk}(H)[V(d^\slk_j(H))\smallsetminus V_{i+1}]\cup G[\lk(V_{i+1})\cap (V(H)\smallsetminus V_j)]\\
&=&d_j^{\slk}(H)[V(d^\slk_j(H))\smallsetminus V_{i+1}]\cup G[\lk(V_{i+1})\cap (V(H)\smallsetminus V_j\smallsetminus V_{i+1})]\\
\end{array}
$$
because $\lk(V_{i+1})\cap V_{i+1}=\emptyset$.\\

The term $d_j^{\slk}(H)[V(d^\slk_j(H))\smallsetminus V_{i+1}]$ is the induced subgraph of $d_j^{\slk}(H)$ on its vertex subset
$$
V(d^\slk_j(H))\smallsetminus V_{i+1}=V(H)\smallsetminus V_j\smallsetminus V_{i+1}.
$$
By definition, $d_j^{\mathrm{lk}}(H)=H[V(H)\smallsetminus V_j(H)]\cup G[\lk(V_j(H))\cap V(H)]$. Restricting to the vertex subset
$$
W=V(H)\smallsetminus V_j\smallsetminus V_{i+1}
$$
we have
$$
d_j^{\slk}(H)[V(d^\slk_j(H))\smallsetminus V_{i+1}]=H[W]\cup G[\lk(V_j)\cap W]
$$
and so
$$
d_i^\slk(d_j^{\slk}(H))=H[W]\cup G[\lk(V_j)\cap W]\cup G[\lk(V_{i+1})\cap W].
$$
By the same arguments,
$$
d_j^\slk(d_{i+1}^{\slk}(H))=H[W]\cup G[\lk(V_j)\cap W]\cup G[\lk(V_{i+1})\cap W].
$$
\end{proof}
\subsubsection{Face Operations on Subgraphs with Marked Starting-Vertices}
Let $H$ be a subgraph of $G$ and let $S$ be a subset of the vertex set $V(H)$. When we remove $S$ from $H$, we wish to add as few edges as possible to make a blowup for the subgraph $H\smallsetminus S$ with the aim of preserving particular properties of $H$. For this construction, consider an extension of the working graph $G$ by adding an extra edge between any two distinct vertices $v,w$ in $G$ labeled as $\infty_{vw}$. This is an analogue to the idea of compactification in geometry. To showcase this we consider a special family of subgraphs.

Let $G$ be a directed/undirected (multi-)graph.
\begin{definition}
A \textit{subgraph with marked starting-vertices} of $G$ is a pair $(H, \SV(H))$ satisfying the following conditions:
\begin{enumerate}
\item[1)] $H$ is a subgraph of $G$,
\item[2)] $\SV(H)$ is a subset of $V(H)$,
\item[3)] every vertex $v$ of $H$ is reachable by a directed/undirected path out from a vertex in $\SV(H)$.
\end{enumerate}
\end{definition}

\noindent  In the case of digraphs, one may require further that there are no directed edges in $H$ incident into any vertex in $\SV(H)$. In this case, $\SV(H)$ acts as a source set for $H$. We want to give a description of face operations that is consistent across graphs and digraphs, therefore we do not require such an extra condition. There could be some redundant vertices contained in $\SV(H)$. A trivial example is to choose $\SV(H)=V(H)$, in which case there are no face operations. If $\SV(H)$ is a proper subset of $V(H)$, there will be nontrivial face operations.

Let $(H, \SV(H))$ be a subgraph with marked starting-vertices of $G$. We will now recursively construct a partition on the vertex set $V(H)$, called the \textit{neighborhood-extension partition},  in the following way. Let $V_0(H)=\SV(H)$ and suppose that $V_j(H)$ is constructed with $j\geq 0$. Define $V_{j+1}(H)$ as follows:
\begin{enumerate}
\item[1)] In the undirected case, let $V_{j+1}$ be the link set of the subgraph $H[V_j]$ in the graph $H$.
\item[2)] In the directed case,  let $V_{j+1}$ be the \textit{out-link set} of the subgraph $H[V_j]$ in the graph $H$. Here, for a (multi-)digraph $\Gamma$ and a subgraph $\Gamma'$, the \textit{closed out-neighborhood set} of $\Gamma'$ in $\Gamma$ is the union of $\Gamma'$ and the out-neighbors of $\Gamma'$ in $\Gamma$, denoted by $N^{\mathrm{out}}(\Gamma')$. The \textit{out-link set} of $\Gamma'$ is defined as
$$
\lk^{\mathrm{out}}(\Gamma')=N^{\mathrm{out}}(\Gamma')\smallsetminus V(\Gamma').
$$
For a subset $S$ of $V(\Gamma)$, let $N^{\mathrm{out}}(S)=N^{\mathrm{out}}(\Gamma[S])$ and $\lk^{\mathrm{out}}(S)=\lk^{\mathrm{out}}(\Gamma[S])$.
\end{enumerate}

If $(H, \SV(H))$ is a finite subgraph with marked starting-vertices of $G$, then above recursive construction will stop after finitely many steps, hence this gives a finite partition on $V(H)$.

To define face operations taking marked starting-vertices in to account, we embed $G$ into a larger graph $\hat G$, where $V(\hat G)=V(G)$ and $E(\hat G)$ is the extension of $E(G)$ by adding one edge $\infty_{vw}$ for $v\not=w\in V(G)$ in the undirected case, and by adding two directed edges $\infty_{vw}$ from $v$ to $w$ and $\infty_{wv}$ from $w$ to $v$ for vertices $v\not=w\in V(G)$ in the directed case.

Now let $(H, \SV(H))$ be a finite subgraph with marked starting-vertices of $\hat G$ with the neighborhood extension partition
$$
V(H)=\coprod_{i=0}^nV_i.
$$
We assume that $H\not=\emptyset$ and so  $V_0=\SV(H)\not=\emptyset$. From the recursive definition, $V_{i+1}\not=\emptyset$ implies that $V_i\not=\emptyset$. Therefore, $V_n$ is the last nonempty set in the recursive procedure.

We define the face operation $d^{\SV}_j$ on $H$, $0\leq j\leq n$, with $n>0$ as follows:
\begin{enumerate}
\item $d^{\SV}_0(H)=H[V(H)\smallsetminus V_0]$ with $SV(d^{\SV}_0(H))=V_1$.
\item $d^{\SV}_n(H)=H[V(H)\smallsetminus V_n]$ with $SV(d^{\SV}_n(H))=V_0$.
\item For $0\leq j<n$,
$$
d_j^{\SV}(H)=H[V(H)\smallsetminus V_j]\cup\mathcal{E}^H(V_{j-1},V_{j+1})
$$
with $\SV(d_j^{\SV}(H))=V_0$, where $\mathcal{E}^H(V_{j-1},V_{j+1})$ is a subset of the edge set $E(\hat G)$ consisting of $\infty_{vw}$ for $v\in V_{j-1}$ and $w\in V_{j+1}$ satisfying the property that there does not exist an edge from $v$ to $w$ in $H$\footnote{In the directed case, we only consider directed edges from a vertex in $V_{j-1}$ to a vertex in $V_{j+1}$ in $H$. If there are no directed edges in $H$ from $v\in V_{j-1}$ to $w\in V_{j+1}$, we add $\infty_{vw}$ to join them with direction from $v$ to $w$.}.
\end{enumerate}
Let $V_{-1}=V_{n+1}=\emptyset$. Then we can write in a unified way that
\begin{equation}
d_j^{\SV}(H)=H[V(H)\smallsetminus V_j]\cup\mathcal{E}^H(V_{j-1},V_{j+1})\textrm{ with } \SV(d_j^{\SV}(H))=V_{\delta_{j,0}}
\end{equation}
 for $0\leq j\leq n$, where $\delta_{a,b}$ is the Kronecker $\delta$ symbol.

We need to show that $(d^{\SV}_j(H), \SV(d^{\SV}_j(H)))$ is also a finite subgraph with marked starting-vertices of $\hat G$. This is straightforward because we add in $\infty_{vw}$ for possible missing edges in $H$ from $V_{j-1}$ to $V_{j+1}$. We also need to show that the $\Delta$-identity holds for these face operations. Let $i\geq j$. For $i>j$, we get 
$$
d_i^{\SV}(d_j^{\SV}(H))=d_j^{\SV}(d_{i+1}^{\SV}(H))=H[V(H)\smallsetminus V_j\smallsetminus V_{i+1}]\cup\mathcal{E}^H(V_{j-1},V_{j+1})\cup\mathcal{E}^H(V_i, V_{i+2}).
$$
For $i=j$, we have
$$
d_j^{\SV}(d_j^{\SV}(H))=d_j^{\SV}(d_{j+1}^{\SV}(H))=H[V(\H)\smallsetminus V_j\smallsetminus V_{j+1}]\cup \mathcal{E}^H(V_{j-1},V_{j+2}).
$$
This gives the following proposition.

\begin{proposition}\label{starting-vetex face identity}
In the set of finite subgraphs with marked starting-vertices of $\hat G$, the operations $d_j^{\SV}$ are well-defined and satisfy the $\Delta$-identity for face operations.

\hfill $\Box$
\end{proposition}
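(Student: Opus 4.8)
The proposition packages two claims: well-definedness of each $d_j^{\SV}$ as an operation on finite subgraphs with marked starting-vertices of $\hat G$, and the simplicial $\Delta$-identity. The plan is to settle well-definedness first, then to isolate the single structural fact---how the neighborhood-extension partition behaves under one face operation---that makes the identity a matter of re-indexing the computation displayed just above the statement.

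For well-definedness the only condition requiring argument is reachability: every vertex of $d_j^{\SV}(H)$ should lie on a directed/undirected path issuing from $\SV(d_j^{\SV}(H))$. When $j=n$ or $j=0$ this is immediate, since deleting the top layer $V_n$ leaves the remaining layering untouched, and deleting the source layer $V_0$ while promoting $V_1$ to the new source set preserves all paths. For $0<j<n$, deleting $V_j$ can break exactly those paths that ascended through $V_j$, and the added edges $\mathcal{E}^H(V_{j-1},V_{j+1})$ are designed to repair them: because consecutive layers are adjacent, every $w\in V_{j+1}$ was reached from some $v\in V_{j-1}$ via $V_j$, and the new edge $\infty_{vw}$ (which is indeed adjoined, since there is never an $H$-edge between the non-adjacent layers $V_{j-1}$ and $V_{j+1}$) reinstates a path from $V_0$ to $w$. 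Finiteness and the containment $\SV(d_j^{\SV}(H))\subseteq V(d_j^{\SV}(H))$ are clear.

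The structural fact I would then prove is that the neighborhood-extension partition of $d_j^{\SV}(H)$ is obtained from $V_0,\ldots,V_n$ by deleting $V_j$ and re-indexing: for $j>0$ the layers become $V_0,\ldots,V_{j-1},V_{j+1},\ldots,V_n$, and for $j=0$ they become $V_1,\ldots,V_n$. The point is that in $d_j^{\SV}(H)$ the (out-)link set of layer $j-1$ is now $V_{j+1}$---precisely because the adjoined edges connect $V_{j-1}$ directly to $V_{j+1}$ while $V_j$ has been removed---so the recursive layering of the new graph reproduces the old layers in shifted order. This verification is the conceptual heart, and I would carry it out uniformly in the undirected and directed cases, using that a forward edge between non-adjacent layers cannot exist in $H$.

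Granting this, the $\Delta$-identity reduces to the case analysis already indicated. For $i>j$, composing in either order deletes $V_j$ and $V_{i+1}$ and, after translating the second deletion through the shift, adjoins $\mathcal{E}^H(V_{j-1},V_{j+1})\cup\mathcal{E}^H(V_i,V_{i+2})$, so both composites equal $H[V(H)\smallsetminus V_j\smallsetminus V_{i+1}]\cup\mathcal{E}^H(V_{j-1},V_{j+1})\cup\mathcal{E}^H(V_i,V_{i+2})$; for $i=j$ both composites delete $V_j$ and $V_{j+1}$, the intermediate edges disappear because they meet a cluster that is then removed, and both equal $H[V(H)\smallsetminus V_j\smallsetminus V_{j+1}]\cup\mathcal{E}^H(V_{j-1},V_{j+2})$. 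I expect the main obstacle to be the partition-transformation step rather than this final bookkeeping: one must check that adjoining $\infty$-edges perturbs the layering of no cluster other than the one being deleted, and that the edge families computed relative to the intermediate graph agree with those relative to $H$---which holds here only because every adjoined edge joins non-adjacent layers, so the ``no pre-existing edge'' clause is automatic and stable under the first face operation.
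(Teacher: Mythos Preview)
Your proposal is correct and follows the same approach as the paper: verify well-definedness via the reachability repaired by the $\infty$-edges, then establish the $\Delta$-identity by the direct two-case computation that yields $H[V(H)\smallsetminus V_j\smallsetminus V_{i+1}]\cup\mathcal{E}^H(V_{j-1},V_{j+1})\cup\mathcal{E}^H(V_i,V_{i+2})$ for $i>j$ and $H[V(H)\smallsetminus V_j\smallsetminus V_{j+1}]\cup\mathcal{E}^H(V_{j-1},V_{j+2})$ for $i=j$. The one difference is that you make explicit the structural lemma the paper leaves tacit---namely that the neighborhood-extension partition of $d_j^{\SV}(H)$ is the original partition with $V_j$ deleted and the remaining layers re-indexed---which is exactly what is needed to justify reading off the second face operation in terms of the original $V_k$'s; the paper simply writes down the resulting formulas without isolating this step.
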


Now let $\H$ be a collection of finite subgraphs with marked starting-vertices of $G$. Under the extension $G\preceq \hat G$, $\H$ is also a collection of finite subgraphs with marked starting-vertices of $\hat G$. Let $X$ be the collection of finite subgraphs with marked starting-vertices of $\hat G$ given by all elements in $\H$ together with all of their iterated faces under face operations $d_j^{\SV}$. Then $X$ is $\Delta$-set dominated by $\hat G$. We call $(\mathcal{P}(G)\cap X, X)$ the \textit{super-hypergraph generated} by $\H$.

\subsubsection{Revisiting Path Complexes}

We now come back to the work of Yau's school on path (co-)homology of graphs~\cite{GLMY1}. Following their terminology, a simple digraph $G$ is a pair $(V,E)$, where $V$ is any set and $E\subseteq \{V\times V\smallsetminus \mathrm{diag}\}$. We will show that $d_j^{\SV}$ can be used to describe the face operations given in~\cite{GLMY1}. To do this, we consider the ``\textit{largest quotient simple digraph}'' $\tilde G$ on $\hat G$. Let $V(\tilde G)=V(\hat G)=V$, and for any ordered pair $(v, w)\in V\times V\smallsetminus \mathrm{diag}$, identify all the directed edges from $v$ to $w$ to give one such directed edge. Since $G$ is a simple graph, $\tilde G$ can be chosen to be a quotient of $\hat G$. Then $G$ is a subgraph of the simple digraph $\tilde G$. Let $v,w\in V(G)=V(\tilde G)$ be two distinct vertices. If there exists a directed edge $e_{vw}$ from $v$ to $w$, then $\infty_{vw}$ is identified with $e_{vw}$. Otherwise $\infty_{vw}$ is isolated. The graph $\tilde G$ can be considered as the completion of $G$ in the sense that it is the smallest complete simple digraph containing $G$. Here a complete simple digraph means a simple digraph with the property that for any two distinct vertices $v$ and $w$, there are exactly two directed edges with one from $v$ to $w$ and another from $w$ to $v$.

Replacing $\hat G$ by $\tilde G$, our face operation $d_j^{\SV}$ on paths in $\tilde G$ coincides with the face operation in~\cite[Section 4]{GLMY1} in the sense that it describes the $j$-th term in the boundary operators for chains and cochains on the path complex. Here, to match the definitions, a regular elementary path in~\cite[Section 4]{GLMY1} is a path in $\tilde G$ and an allowed regular elementary path in~\cite[Section 4]{GLMY1} is a path in the subgraph $G$. Then one can obtain the same objects by going via the definition of path homology in~\cite{GLMY1} or the definition of embedded homology of hypergraphs given above. In the undirected case, the operations $d_j^{\SV}$ on paths in $\tilde G$ coincide with the face operations in ~\cite[Section 5]{GLMY1}.

For directed multi-graphs (quivers), the operations $d_j^{\SV}$ describe the face operations in~\cite{GLMY6}. Similarly to the undirected case, we need to do a certain identification on $\hat G$. Following the argument in~\cite[Section 3]{GLMY6}, for a complete quiver $G$, $\infty_{vw}$ is identified with the $1$-chain given by the sum of all directed edges from $v$ to $w$ in $G$. This would define a chain complex on the path complex of a complete quiver. For an arbitrary quiver $G$, one can embed $G$ into its completion $\bar G$, and take the infimum chain complex (in Proposition~\ref{proposition9.2}) of the path complex of $G$ in the chain complex of the path complex of $\bar G$ to define path homology for the quiver $G$, see~\cite{GLMY6} for details.

As notions of paths and walks are commonly used in data analytics, generalising them to higher dimensional combinatorial objects, such as path complexes, could provide new tools for various applications. From a data science point of view, a graph $G$ is assumed as a working data. Then the  path homology gives some topological information on $G$. Using a scoring scheme of $G$, one could get persistent path homology of $G$, this gives a persistence diagram/barcode of $G$ as a topological feature. However, if the graphic data-set $G$ is large, the computational complexity may be an issue. From such a perspective, it would be reasonable to consider a selected sub complex, or more generally a selected graded subset of the path complex. The general theory developed in this article gives a framework that makes it possible to explore topological features from subcomplexes or graded subsets of path complexes.

\subsection{Descriptions of Simplicial Homology Via $\Delta$-sets}\label{subsection4.5}
In this subsetion we only consider mod $2$ homology, hence the coefficients are taken in $\F = \Z/2$.

\subsubsection{$\Delta$-neural network}
We can interpret a $\Delta$-set as a quiver or network. A similar object is a feed-forward neural network, as defined in ~\cite[Section 3.6.1]{Yadav}.

\textit{Feed-forward neural networks}, are the simplest form of artificial neural networks. The feed forward neural network was the first and arguably simplest type of artificial network devised. In this network, the information moves in only one direction, forward, from the input nodes, through the hidden nodes (if any) and to the output nodes. There are no cycles or loops in the network. In a feed-forward system, processing elements are arranged into distinct layers with each layer receiving input from the previous layer and outputting to the next layer.

Let $X=\{X_n\}_{n\geq0}$ be a $\Delta$-set. Each element in $X$ is considered as a node (vertex), that is, the node set $X$ is partitioned by layers labeled by $X_0,X_1,\ldots$. For each $x\in X_n$, assign one and only one arrow (directed edge) $d^n_{i,x}\colon x\to d_i(x)$ for $0\leq i\leq n$. This forms a set of arrows whose tails lie in $X_n$ and whose heads lie in $X_{n-1}$. So it forms a quiver. The following picture illustrates the arrows from $X_2$ to $X_1$.
$$
\xymatrix{
\bullet\ar@2{->}[d]^{d_2}_{d_0} \ar@{->}[drr]|{d_1}& \bullet\ar@{->}[dl]|{d_0}\ar@{->}[dr]|{d_2}\ar@{->}[drr]|{d_1} &   &  &\\
\bullet &  \bullet &\bullet &\bullet &\bullet\\}
$$
Rephrasing the definition of $\Delta$-set into the terminology of network,  a \textit{$\Delta$-neural network} is a quiver with distinct layers of nodes labeled by $X_0, X_1, \ldots$ such that for each node $x\in X_n$ with $n>0$, there are arrows $d^n_{i,x}$, $0\leq i\leq n$, tailed at $x$ and headed at some node $d^n_i(x)\in X_{n-1}$ such that
\begin{equation}\label{product-rule}
d^{n-1}_i(d^n_j(x))=d^{n-1}_j(d^n_{i+1}(x))
\end{equation}
for $0\leq j\leq i\leq n-1$. In a $\Delta$-neural network, the information only flows in one direction, from input nodes that could be located in different layers to the output nodes.

The adjacency relationship from the $n^{\text{th}}$ layer $X_n$  to the $n-1^{\text{th}}$ layer $X_{n-1}$ can be described by $(n+1)$ matrices as follows. For $x\in X_n$ and $y\in X_{n-1}$, let
$$
w^i_{x,y}=\left\{
\begin{array}{ll}
1&\textrm{ if } y=d_i(x)\\
0&\textrm{ otherwise}\\
\end{array}\right.
$$
for $0\leq i\leq n$. Let
$$
W_n(i)=(w^i_{x,y})_{x\in X_n, y\in X_{n-1}}
$$
be a $|X_n|\times |X_{n-1}|$ matrix, which is the matrix for the face operation $d_i\colon X_n\to X_{n-1}$. Equation~(\ref{product-rule}) can be rewritten as the following formula
\begin{equation}
W_n(j)W_{n-1}^t(i)=W_n(i+1)W_{n-1}^t(j)
\end{equation}
for $0\leq j\leq i\leq n-1$, where $A^t$ is the transpose of a matrix $A$.

Let $\H$ be a graded subset of the $\Delta$-set $X$. In our theory of super-hypergraphs, $\H$ carries part of the $\Delta$-set structure of $X$. More precisely, the face operation $d_i\colon X_n\to X_{n-1}$ induces a partially defined face operation $d_i\colon \H_n\rightsquigarrow \H_{n-1}$. By considering this as a network, $\H$ is full subnetwork of the $\Delta$-neural network induced by $X$, where a \textit{full subnetwork} is the induced network of the nodes of $\H$ in the neural network induced by $X$. Therefore, one can get homology on any full subnetwork of a $\Delta$-neural network using embedded homology of super-hypergraphs.

\begin{remark}
The product rule~(\ref{product-rule}) is important to define the boundary operator on the chains. But variations are possible. For example, one could vary the product rule for weighted simplicial complexes~\cite{Chengyuan}, or the boundary operators on cochains could be varied to account for twisted de Rham cohomology~\cite{Dwork}.

\end{remark}

\subsubsection{Descriptions of mod $2$ Homology}

We proceed by giving the ideas behind the intuition of mod $2$ homology of $\Delta$-sets and super-hypergraphs. Let $(\H,X)$ be a super-hypergraph. Then the $n$-chains on $X$ are linear combinations of the elements in $X_n$ with coefficients in $\Z/2$. So each $n$-chain $\alpha$ corresponds to a subset $\{x_1,\ldots,x_k\}\subseteq X_n$ given by $\alpha=x_1+x_2+\cdots+x_k$. Since the coefficients are in $\Z/2$,
$$
\partial(\alpha)=\sum_{i=1}^k\partial(x_i)=\sum_{i=1}^k\Sigma_{j=0}^n d_j(x_i)
$$
which is the \textit{trace} of the multi-subset $\{d_j(x_i)\ | \ 0\leq j\leq n, 1\leq i\leq k\}$ of $X_{n-1}$. Here the multiplicity of $y=d_j(x_i)\in X_{n-1}$ is the number of pairs $(j',i')$ such that $d_{j'}(x_{i'})=d_j(x_i)=y$, that is, the in-degree of the node $y\in X_{n-1}$ in the $\Delta$-neural network. Hence we have the following proposition.

\begin{proposition}
An $n$-chain $\alpha=x_1+x_2+\cdots+x_k$ is a mod $2$ cycle (that is, $\partial (\alpha)=0$) if and only if any node in the subset $$\{d_j(x_i)\ | \ 0\leq j\leq n, 1\leq i\leq k\}$$ of $X_{n-1}$ has even in-degree.\hfill $\Box$
\end{proposition}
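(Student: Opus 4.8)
The plan is to unwind the definition of the mod $2$ boundary operator and read off the claim coefficient by coefficient in the basis $X_{n-1}$ of the chain group $C_{n-1}(X;\Z/2)$. Since the $x_i$ are distinct elements of $X_n$, the chain $\alpha$ corresponds to the subset $\{x_1,\ldots,x_k\}\subseteq X_n$, and working over $\Z/2$ the computation preceding the statement gives
$$
\partial(\alpha)=\sum_{i=1}^k\sum_{j=0}^n d_j(x_i).
$$
The right-hand side is a $\Z/2$-linear combination of elements of $X_{n-1}$, so the first step is simply to collect, for each fixed $y\in X_{n-1}$, all the summands $d_j(x_i)$ that equal $y$.

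For a fixed $y\in X_{n-1}$, let $m(y)$ denote the number of pairs $(i,j)$ with $0\leq j\leq n$, $1\leq i\leq k$, and $d_j(x_i)=y$. By the discussion preceding the proposition, $m(y)$ is precisely the in-degree of the node $y$ in the subnetwork of the $\Delta$-neural network spanned by the arrows emanating from $x_1,\ldots,x_k$. Since addition in $C_{n-1}(X;\Z/2)$ is mod $2$, the coefficient of $y$ in $\partial(\alpha)$ equals $m(y)\bmod 2$, whence
$$
\partial(\alpha)=\sum_{y\in X_{n-1}}\big(m(y)\bmod 2\big)\,y.
$$

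The second step is to use that $X_{n-1}$ is a basis of $C_{n-1}(X;\Z/2)$: a chain vanishes if and only if every one of its coefficients vanishes. Hence $\partial(\alpha)=0$ if and only if $m(y)$ is even for every $y\in X_{n-1}$. Finally, a node $y$ with $m(y)=0$ does not lie in the set $\{d_j(x_i)\mid 0\leq j\leq n,\ 1\leq i\leq k\}$ and trivially has even in-degree, so restricting the quantifier to the nodes actually appearing in this set leaves the condition unchanged. This yields the stated equivalence.

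There is no genuine obstacle here: the argument is a direct translation of the mod $2$ boundary into the combinatorics of in-degrees. The only points requiring a word of care are that the $x_i$ must be taken distinct so that $\alpha$ really corresponds to a subset (guaranteed by the identification of $\Z/2$-chains with subsets of $X_n$), and that ``in-degree'' must be interpreted relative to the arrows coming from the chosen chain $\alpha$ rather than from all of $X_n$; both conventions are already implicit in the setup established before the statement.
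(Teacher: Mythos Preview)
Your argument is correct and follows exactly the approach the paper intends: the proposition is stated immediately after the computation of $\partial(\alpha)$ as the trace of the multi-set $\{d_j(x_i)\}$, with multiplicity given by in-degree, and the paper offers no further proof beyond the $\Box$. You have simply spelled out that reasoning in detail, including the care that the $x_i$ be distinct and that in-degree be taken relative to the arrows from $\alpha$; nothing more is needed.
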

This proposition indicates that one can consider the nodes in $X_{n-1}$ with even in-degrees in search for possible mod $2$ cycles in $n$-chains.

The following proposition follows from the fact that $Z_n(\inf_*^{C_*(X)}(\H))=\Z/2(\H_n)\cap Z_n(C_*(X))$.
\begin{proposition}
A mod $2$ cycle $\alpha=x_1+x_2+\cdots+x_k$ in the chains $C_n(X)$, with all $x_i$ distinct represents a cycle for the mod $2$ embedded homology $H_n^{\emb, X}(\H)$ if and only if $\{x_1,\ldots,x_k\}\subseteq \H_n$.\hfill $\Box$
\end{proposition}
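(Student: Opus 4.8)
The plan is to unwind what it means for $\alpha$ to ``represent a cycle for the mod $2$ embedded homology $H_n^{\emb,X}(\H)$'' and to match this against the standing hypothesis that $\alpha$ is already a cycle in $C_*(X)$. By Definition~\ref{super-hypergraph homology}, $H_n^{\emb,X}(\H)$ is the homology of the infimum chain complex $\inf_*^{C_*(X;\F)}(\F(\H))$, so ``$\alpha$ represents a cycle'' means precisely that $\alpha\in Z_n(\inf_*^{C_*(X)}(\H))$. Thus the entire statement reduces to characterising which cycles of $C_n(X)$ lie in $Z_n(\inf_*^{C_*(X)}(\H))$.

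The key computation is the stated identity $Z_n(\inf_*^{C_*(X)}(\H))=\F(\H_n)\cap Z_n(C_*(X))$, which I would establish first. By Proposition~\ref{proposition9.2}(2), applied to the graded subgroup $\F(\H)\subseteq C_*(X;\F)$, one has $\inf_n^{C_*(X)}(\H)=\F(\H_n)\cap\partial_n^{-1}(\F(\H_{n-1}))$, and the differential on the infimum complex is the restriction of $\partial_n$ from $C_*(X)$. Hence a chain in the infimum complex is a cycle exactly when it additionally lies in $\Ker(\partial_n^{C_*(X)})$, giving
$$
Z_n(\inf_*^{C_*(X)}(\H))=\F(\H_n)\cap\partial_n^{-1}(\F(\H_{n-1}))\cap\Ker(\partial_n^{C_*(X)}).
$$
The only observation needed is that any $\beta\in\Ker(\partial_n^{C_*(X)})$ automatically satisfies $\partial_n\beta=0\in\F(\H_{n-1})$, so the membership $\beta\in\partial_n^{-1}(\F(\H_{n-1}))$ is redundant; this collapses the triple intersection to $\F(\H_n)\cap\Ker(\partial_n^{C_*(X)})=\F(\H_n)\cap Z_n(C_*(X))$, as claimed.

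With this identity in hand the conclusion is immediate. Since $\alpha$ is assumed to be a cycle in $C_n(X)$, i.e. $\alpha\in Z_n(C_*(X))$, the condition $\alpha\in Z_n(\inf_*^{C_*(X)}(\H))$ is equivalent to the single membership $\alpha\in\F(\H_n)$. Finally, because $X_n$ is the standard $\F$-basis of $C_n(X)=\F(X_n)$ and the summands $x_i$ are \emph{distinct}, the support of $\alpha=x_1+\cdots+x_k$ is exactly $\{x_1,\ldots,x_k\}$, and $\F(\H_n)$ is precisely the subspace of chains supported on $\H_n$; therefore $\alpha\in\F(\H_n)$ holds if and only if $\{x_1,\ldots,x_k\}\subseteq\H_n$. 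There is essentially no serious obstacle here: the distinctness hypothesis together with the mod $2$ coefficients (so that each basis element occurs with coefficient $1$ and no cancellation can hide a summand) is exactly what legitimises the passage from ``the chain lies in $\F(\H_n)$'' to ``each individual summand lies in $\H_n$'', which is the heart of the equivalence.
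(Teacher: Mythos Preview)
Your proof is correct and follows the same route as the paper: the paper simply states that the proposition ``follows from the fact that $Z_n(\inf_*^{C_*(X)}(\H))=\Z/2(\H_n)\cap Z_n(C_*(X))$'' (an identity already recorded in the Computations subsection), and you supply exactly this identity together with its derivation from Proposition~\ref{proposition9.2}(2). Your argument is just a more explicit unpacking of the same one-line justification.
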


An $n$-chain $\alpha=x_1+\cdots+x_k$ with all $x_i$ distinct is a boundary in the chain complex $C_*(X)$ if and only if the equation
$$
\alpha=\partial(\beta)
$$
where $\beta=y_1+\cdots +y_m$ with all $x_i$ distinct in $X_{n+1}$ has a solution. If there is a solution $\alpha=\partial(\beta)$, then $\{x_1,\ldots,x_k\}$ is the set of nodes in the multi-set $\{d_s(y_t) \ | \ 0\leq s\leq n+1,\ 1\leq t\leq m\}$ which have odd in-degrees.
This proves the following statement.
\begin{proposition}
An $n$-chain $\alpha=x_1+\cdots+x_k$ with all $x_i$ distinct is a boundary in the mod $2$ chain complex $C_*(X)$ if and only if there exists a subset $\{y_1,\ldots,y_m\}\subseteq X_{n+1}$ with $y_1,\ldots,y_m$ distinct such that $\{x_1,\ldots,x_k\}$ is the set of nodes in the multi-set $\{d_s(y_t) \ | \ 0\leq s\leq n+1,\ 1\leq t\leq m\}$ which have odd in-degrees.\hfill $\Box$
\end{proposition}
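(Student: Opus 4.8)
The plan is to unwind the definition of a boundary in the mod $2$ chain complex $C_*(X)$ and translate it directly into the combinatorics of in-degrees in the $\Delta$-neural network, exactly as was done for the two preceding propositions on mod $2$ cycles.

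First, recall that by definition $\alpha$ is a boundary precisely when there exists an $(n+1)$-chain $\beta\in C_{n+1}(X)$ with $\partial(\beta)=\alpha$. Since the coefficients lie in $\F=\Z/2$, every such $\beta$ is the indicator of a finite subset of $X_{n+1}$; that is, $\beta=y_1+\cdots+y_m$ for distinct $y_1,\ldots,y_m\in X_{n+1}$, and conversely every such subset gives a chain. Thus quantifying over $(n+1)$-chains $\beta$ is the same as quantifying over subsets $\{y_1,\ldots,y_m\}\subseteq X_{n+1}$, which already matches the shape of the statement.

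Next I would compute the boundary. We have
$$
\partial(\beta)=\sum_{t=1}^m\partial(y_t)=\sum_{t=1}^m\sum_{s=0}^{n+1}d_s(y_t),
$$
an element of $C_n(X)=\F(X_n)$. For a fixed $x\in X_n$, its coefficient in $\partial(\beta)$ equals, modulo $2$, the number of pairs $(s,t)$ with $d_s(y_t)=x$. This count is exactly the multiplicity of $x$ as a node in the multi-set $\{d_s(y_t)\mid 0\le s\le n+1,\ 1\le t\le m\}$, which is precisely the in-degree of $x$ arising from the arrows out of $\{y_1,\ldots,y_m\}$ in the $\Delta$-neural network. Hence the coefficient of $x$ in $\partial(\beta)$ is $1$ if and only if $x$ has odd in-degree.

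Finally I would compare with $\alpha$. Since the $x_i$ are distinct, $\alpha$ is the chain whose coefficient is $1$ on each of $x_1,\ldots,x_k$ and $0$ on every other element of $X_n$. By the coefficient computation above, the identity $\partial(\beta)=\alpha$ holds if and only if the set of nodes of the multi-set having odd in-degree is exactly $\{x_1,\ldots,x_k\}$. Combining this equivalence with the reduction in the first step yields both directions of the proposition. No single step is a genuine obstacle; the only point requiring care is the bookkeeping that identifies ``coefficient in $\partial(\beta)$ modulo $2$'', ``multiplicity in the multi-set'', and ``in-degree of a node'' as one and the same quantity, together with the observation that over $\Z/2$ no generality is lost by writing a chain as the indicator of a subset.
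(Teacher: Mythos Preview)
Your argument is correct and follows the same route as the paper: the paper's proof (given just before the statement) simply observes that $\alpha$ is a boundary iff $\alpha=\partial(\beta)$ for some $\beta=y_1+\cdots+y_m$ with distinct $y_t\in X_{n+1}$, and then reads off that $\{x_1,\ldots,x_k\}$ is exactly the set of nodes of odd in-degree in the multi-set of faces. Your write-up is slightly more explicit about the coefficient/in-degree identification, but the content is identical.
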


Note that $B_n(\inf_*^{C_*(X)}(\H))=\Z/2(\H_n)\cap\partial(\Z/2(\H_{n+1}))$.

\begin{proposition}
Let $\alpha=x_1+\cdots+x_k\in \Z/2(\H_n)$ with $x_1,\ldots,x_k$ distinct. Then $\alpha$ is a boundary in $\inf_*^{C_*(X)}(\H)$ if and only if there exists a subset $\{y_1,\ldots,y_m\}\subseteq \H_{n+1}$ with $y_1,\ldots,y_m$ distinct such that   $\{x_1,\ldots,x_k\}$ is the set of nodes in the multi-set $\{d_s(y_t) \ | \ 0\leq s\leq n+1,\ 1\leq t\leq m\}$ which have odd in-degrees.\hfill $\Box$
\end{proposition}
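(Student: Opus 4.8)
The plan is to reduce the statement to the boundary formula $B_n(\inf_*^{C_*(X)}(\H))=\Z/2(\H_n)\cap\partial(\Z/2(\H_{n+1}))$ recorded immediately before the proposition, and then to reinterpret membership in $\partial_{n+1}(\Z/2(\H_{n+1}))$ through the mod $2$ in-degree description established in the preceding propositions. First I would note that, since $\alpha\in\Z/2(\H_n)$ by hypothesis, that boundary formula shows $\alpha$ is a boundary in $\inf_*^{C_*(X)}(\H)$ exactly when $\alpha\in\partial_{n+1}(\Z/2(\H_{n+1}))$, that is, when $\alpha=\partial_{n+1}(\beta)$ for some chain $\beta\in\Z/2(\H_{n+1})$. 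This step replaces the abstract requirement of being an infimum-boundary by the concrete requirement of admitting a primitive whose support lies in $\H_{n+1}$, which is precisely the constraint on the $y_t$ appearing in the statement.

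Next I would expand any such $\beta$ in terms of its support, writing $\beta=y_1+\cdots+y_m$ with the $y_t$ distinct elements of $\H_{n+1}$, so that $\{y_1,\ldots,y_m\}\subseteq\H_{n+1}$. Computing over $\Z/2$ gives $\partial_{n+1}(\beta)=\sum_{t=1}^m\sum_{s=0}^{n+1}d_s(y_t)$, whence the coefficient of a node $y\in X_n$ equals the number of pairs $(s,t)$ with $d_s(y_t)=y$, reduced mod $2$, i.e.\ the parity of the in-degree of $y$ in the multi-set $\{d_s(y_t)\mid 0\leq s\leq n+1,\ 1\leq t\leq m\}$. Thus the equation $\alpha=\partial_{n+1}(\beta)$ holds if and only if $\{x_1,\ldots,x_k\}$ is exactly the set of odd-in-degree nodes of that multi-set, and reading this equivalence in both directions yields the claim.

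The argument is an adaptation of the preceding proposition for $C_*(X)$, the sole difference being that the primitive $\beta$ is now constrained to be supported in $\H_{n+1}$ rather than in all of $X_{n+1}$. The one place that genuinely merits care, and the step I would flag as the crux, is the boundary formula itself: it rests on Proposition~\ref{proposition9.2}\,(2), which computes $\inf_{n+1}^{C_*(X)}(\H)$ as $\Z/2(\H_{n+1})\cap\partial_{n+1}^{-1}(\Z/2(\H_n))$, and one must verify that $\partial_{n+1}$ applied to this intersection coincides with $\Z/2(\H_n)\cap\partial_{n+1}(\Z/2(\H_{n+1}))$. Since any primitive of an element of $\Z/2(\H_n)$ automatically lies in $\partial_{n+1}^{-1}(\Z/2(\H_n))$, this identification is immediate; once it is in place, the remainder is a direct translation into in-degree language and needs no further machinery.
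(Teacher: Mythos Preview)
Your proposal is correct and follows essentially the same approach as the paper: the paper records the identity $B_n(\inf_*^{C_*(X)}(\H))=\Z/2(\H_n)\cap\partial(\Z/2(\H_{n+1}))$ immediately before the proposition and then closes with a $\Box$, leaving the in-degree translation implicit by analogy with the preceding proposition for $C_*(X)$. You have simply spelled out that implicit step, and your verification of the boundary identity via Proposition~\ref{proposition9.2}(2) is exactly the argument already carried out in the proof of Proposition~\ref{thm:inf->sup_inclus}(1).
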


\section{Potential Applications}\label{sec:potent_apps}

\subsection{Potential Applications in Bio-molecular Structures and Drug Design}\label{subsec:potent_apps_biomed_drugdesgin}
Applications of persistent homology to molecular biology has achieved great success in computer aided drug design~\cite{Wei-drug-design2020,WangCangWei, Wei-patent}. According to~\cite[0005]{Wei-patent}, theoretical models for the study of the structure-function relationships of biomolecules are conventionally based on purely geometric techniques. Mathematically, these approaches make use of local geometric information such as: coordinates, distances, angles, areas and curvatures for the physical modeling of biomolecular systems. However, conventional purely geometry based models tend to be overwhelmed by too much structural detail and are frequently computationally intractable. Topological approaches to determining the nature of structure-function relationships of biomolecules provide a dramatic simplification compared to conventional geometry based approaches~\cite[0053]{Wei-patent}.

However, persistent homology neglects chemical and biological information during topological simplification and is thus not as competitive as geometry or physics-based alternatives in quantitative predictions~\cite{Wei-SIAM-news}. Element-specific persistent homology, or multi-component persistent homology built on colored biomolecular networks, has been introduced to retain chemical and biological information in topological abstractions~\cite{Wei2017-3}. This approach encodes biological properties---such as hydrogen bonds, van der Waals interactions, hydrophilicity, and hydrophobicity---into topological invariants, rendering a potentially revolutionary representation for biomolecules, according to the SIAM news~\cite{Wei-SIAM-news}.

\begin{figure}
	\centering
	\includegraphics[width=0.8\linewidth]{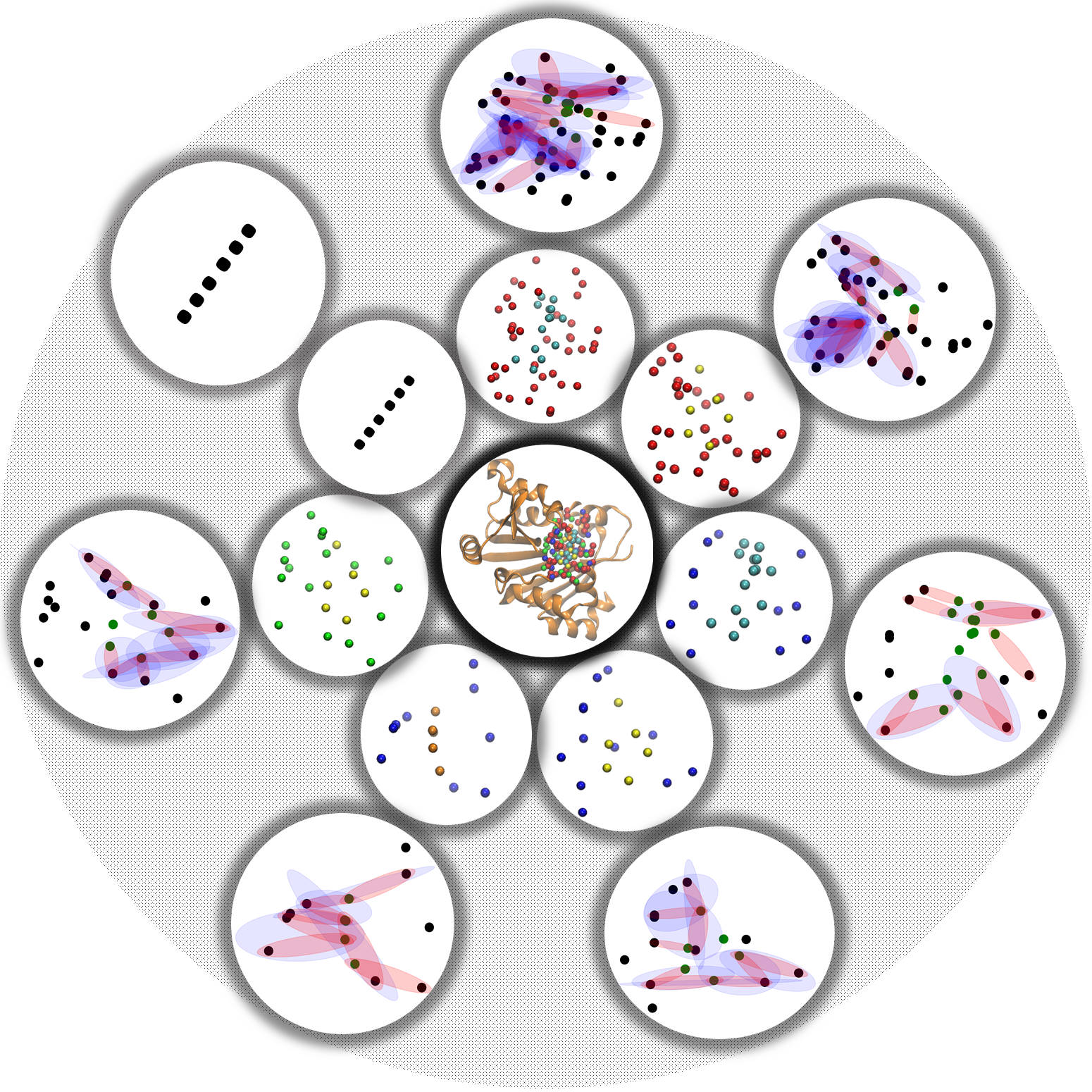}
	\caption{Illustration of an element-specific hypergraph model for a protein-ligand complex (ID 3PB3). The binding core region of the complex is decomposed into a series of element-specific atom-sets. The interactions between protein atom-sets and ligand atom-sets are modeled as a series of hypergraphs.}
	\label{fig:hypergraph}
\end{figure}

Recently, we have proposed hypergraph based persistent cohomology (HPC) for molecular representations in drug design~\cite{LWWX}. In our HPC model, the protein-ligand interactions at the molecular level are represented as a series of element-specific hypergraphs. Figure \ref{fig:hypergraph} illustrates our hypergraph model for a protein-ligand complex with ID 3PB3. Its binding core region is divided into a series of element-specific atom-sets. From these atom sets, element-specific hypergraphs can be constructed to characterize the interactions between protein atom-sets and ligand atom-sets at the level of atoms. Further, we have proposed a distance-related filtration process as illustrated in Figure \ref{fig:filtration}. With the embedded homology model for hypergraphs, we have developed the hypergraph persistent homology and cohomology for molecular characterization. Molecular features and descriptors can be obtained from hypergraph persistent barcodes and hypergraph enriched barcodes, and this information can be further combined with machine learning models, in particular, the gradient boosting tree (GBT). Our HPC-GBT model has performed well for protein-ligand binding affinity predictions. Its Pearson correlation coefficients (PCCs) for the three PDBbind datasets, including PDBbind-v2007, PDBbind-v2013 and PDBbind-v2016, are consistently better than traditional machine learning models with molecular descriptors.

\begin{figure}
	\centering
	\includegraphics[width=0.95\linewidth]{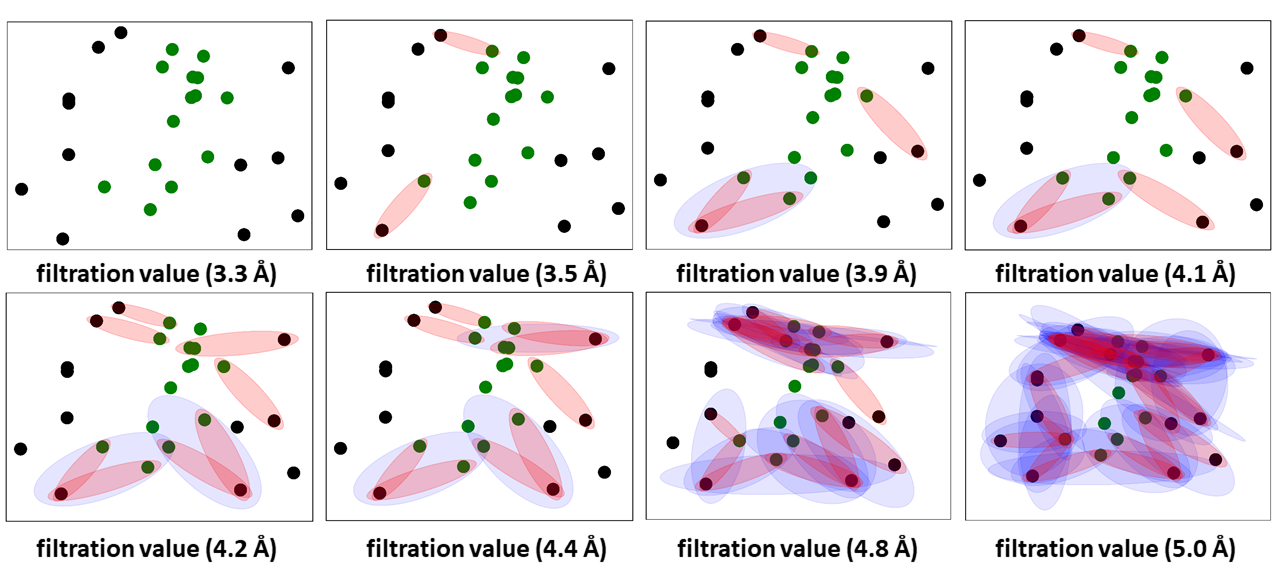}
	\caption{Illustration of a hypergraph-based filtration process for the protein-ligand complex with ID 3PB3.}
	\label{fig:filtration}
\end{figure}

A molecular representation based on super-hypergraphs could give more flexibility in molecular structure and interaction characterization. Unlike simplicies and hyperedges, super-hyperedges can incorporate local topological structures, that is, subgraphs. This provides a unique way to identify and describe molecular motifs, function groups, and domains. Further, boundary operators can be defined through vertex-deletion and edge-deletion, which provide ways to define different types of homology groups and thus characterize different types of inner topological connections. Moreover, different filtration processes can be defined by considering different scoring functions, which in turn will induce different super-hypergraph based persistent homology/cohomology. Finally, molecular descriptors/fingerprints can be generated from super-hypergraph models and further combined with machine learning models for molecular data analysis in materials, chemistry and biology.

\subsection{Potential Applications in Networks with Group Interactions}\label{subsec:potent_apps_group_inter}
The abstract of a recent review article~\cite{Battiston2020}, citing more than 800 references, reads, 
\begin{itemize}
\item[] 
\textit{The complexity of many biological, social and technological systems stems from the richness of the interactions among their units. Over the past decades, a variety of complex systems has been successfully described as networks whose interacting pairs of nodes are connected by links. Yet, from human communications to chemical reactions and ecological systems, interactions can often occur in groups of three or more nodes and cannot be described simply in terms of dyads...We review the measures designed to characterize the structure of these systems and the models proposed to generate synthetic structures, such as random and growing bipartite graphs, hypergraphs and simplicial complexes. We introduce the rapidly growing research on higher-order dynamical systems and dynamical topology, discussing the relations between higher-order interactions and collective behavior...}
\end{itemize}

Here we can see that simplicial complexes, a fundamental notion in algebraic topology, has been extensively used for providing representations of higher-order interactions~\cite[First paragraph of Section 2.1.3]{Battiston2020}. In some practical problems, the limitations of simplicial complexes due to completeness and vertax-determination present a problem. Hypergraphs provide a more general and unconstrained description of higher-order interactions~\cite[Paragraphs 2-4, page 7]{Battiston2020}.

Recent progress shows that simplicial homology can be naturally extended as a homology theory on hypergraphs~\cite{BLRW}. This provides topological invariants for geometric models using hypergraphs which have had successful applications in biomolecular structures and drug design, described in the previous subsection.

As an extension of hypergraphs, super-hypergraphs would provide more a general and unconstrained description of higher-order interations. If we assume that the higher-order interactions take place among the nodes in a working graph, which indicates the pre-existence of the pairwise bonds or the primary pairwise links between the nodes, then the most general and unconstrained description of higher-order interations would be a collection of finite subgraphs of the working graph, which is exactly the topic explored in this article. 

\section{Conclusion}

In this paper, we introduced a new mathematical theory which allows for topological invariants to be applied to broader range of problems, in particular enriching the methods of TDA. This new theory is suitable for both graph data and point cloud data analysis, while  overcoming various limitations of the standard persistent homology theory such as the topological noise and the constraining requirements to use data with metric. Using this new theory, the upgraded pipeline of TDA becomes indetermiinistic in nature allowing for flexibility and adjustments. Moreover, various new topological invariants can be constructed in our flexible setting. As highlighted in Subsections~\ref{ordinary persistent homology}\,-~\ref{subsection4.5}, based on this topological approach, more computational tools of algebraic topology will find applications in data science. For example, in algebraic topology the computation of simplicial homology of a space can be largely simplified by homotopically deforming it into a simpler shape, see~\cite{Hatcher}.

As each simplex of a simplicial complex is uniquely determined by its vertieces, simplicial complexes cannot model collections of subgraphs. To explore topological structures on space of subgraphs, in this paper we use $\Delta$-sets. Furthermore, we introduce the notion of sup-hypergraph, as a generalization of hypergraphs, which sets a stage for the exploration of  topological structures on subgraphs. The homology theory of super-hypergraph, established in Section~\ref{homology}, endows any collection of subgraphs with topological features.

In this work we also use the notion of scoring scheme. As highlighted in Section~\ref{section4}, scoring schemes are used to introduce persistence in an abstract setting without the use of any notion of metric. The classical constructions in persistent simplicial homology theory can be recovered using various scoring schemes.

We should point out that this work presents a theoretic research resulting in a framework that provides an upgraded topological approach to data science, with the aim
to foster further interactions between topology and data science.

Further research on super-hypergraphs is needed as this is a new and challenging mathematical concept. From a topological perspective there are many interesting questions to consider such as, the algebraic structure of the homology of these objects as well as the homotopy aspects of super-hypergraphs that are far less-understood. Furthermore, developments in the topological study of super-hypergraphs will feed into new innovative methods in TDA and other wide-ranging applications. Additionally, the computational complexity of the homology theory of super-hypergraphs is comparable to that of simplicial homology, therefore algorithms methods stemming from our approach will be similarly feasible as classical computation. 
\begin{center}\textmd{\textbf{Acknowledgements} }
\end{center}
This work was supported in part by Natural Science Foundation of China (NSFC
grant no. 11971144) and High-level Scientific Research Foundation of
Hebei Province. The third
author was supported  by Nanyang Technological University
Startup Grant M4081842 and Singapore Ministry of Education Academic
Research fund Tier 1 RG109/19, Tier 2 MOE2018-T2-1-033. The work of GWW was supported by NIH grant  GM126189, NSF grants DMS-1761320,   IIS-1900473, and DMS-2052983,   and NASA grant 80NSSC21M0023
\bibliographystyle{abbrv} 
\bibliography{Bibliography}

\end{document}